\definecolor{gray}{gray}{0.7}
\definecolor{Gray}{gray}{0.3}
\def\Z{{\mathbb{Z}}}
\def\R{{\mathbb{R}}}
\def\C{{\mathbb{C}}}
\def\Y{Y}
\def\m{m}
\def\wh{w_h}
\DeclareMathOperator{\Fl}{Flag}
\DeclareMathOperator{\Hess}{Hess}
\DeclareMathOperator{\id}{id}
\DeclareMathOperator{\vol}{Vol}
\DeclareMathOperator{\GZ}{GZ}
\DeclareMathOperator{\GL}{GL}
\DeclareMathOperator{\SL}{SL}
\DeclareMathOperator{\Perm}{Perm} 
\numberwithin{equation}{section}
\theoremstyle{plain} 
 \newtheorem{theorem}{Theorem}[section]
 \newtheorem{proposition}[theorem]{Proposition}
 \newtheorem{corollary}[theorem]{Corollary}
 \newtheorem{lemma}[theorem]{Lemma}
\newtheorem{problem}[theorem]{Problem}
 \theoremstyle{definition}
 \newtheorem{remark}[theorem]{Remark}
 \newtheorem{example}[theorem]{Example}
\title[positivity of volume polynomial]{The volume polynomial of regular semisimple Hessenberg varieties and the Gelfand-Zetlin polytope}
\author{Megumi Harada}
\address{Department of Mathematics and Statistics, McMaster University, 1280 Main Street West, Hamilton, Ontario L8S4K1, Canada}
\email{Megumi.Harada@math.mcmaster.ca}
\author{Tatsuya Horiguchi}
\address{Department of Pure and Applied Mathematics,
Graduate School of Information Science and Technology,
Osaka University, 1-5 Yamadaoka, Suita, Osaka 565-0871, Japan}
\email{tatsuya.horiguchi0103@gmail.com}
\author{Mikiya Masuda}
\address{Department of Mathematics, Osaka City University, Sumiyoshi-ku, Osaka 558-8585, Japan.}
\email{masuda@sci.osaka-cu.ac.jp}
\author{Seonjeong Park}
\address{Department of Mathematics, Ajou University, Suwon 16499, Republic of Korea}
\email{seonjeong1124@gmail.com}
\keywords{Hessenberg variety, flag variety, Schubert variety, Richardson variety, permutohedral variety, volume polynomials, Gelfand-Zetlin poltyope, Young tableaux} 
\date{\today}
\begin{document}

\maketitle

\begin{abstract}
Regular semisimple Hessenberg varieties are subvarieties of the flag variety $\Fl(\C^n)$ arising naturally in the intersection of geometry, representation theory, and combinatorics. Recent results of Abe-Horiguchi-Masuda-Murai-Sato and Abe-DeDieu-Galetto-Harada relate the volume polynomials of regular semisimple Hessenberg varieties to the volume polynomial of the Gelfand-Zetlin polytope $\GZ(\lambda)$ for $\lambda=(\lambda_1,\lambda_2,\ldots,\lambda_n)$. The main results of this manuscript use and generalize tools developed by Anderson-Tymoczko, Kiritchenko-Smirnov-Timorin, and Postnikov, in order to derive an explicit formula for the volume polynomials of regular semisimple Hessenberg varieties in terms of the volumes of certain faces of the Gelfand-Zetlin polytope, and also exhibit a manifestly positive, combinatorial formula for their coefficients with respect to the basis of monomials in the $\alpha_i := \lambda_i-\lambda_{i+1}$. In addition, motivated by these considerations, we carefully analyze the special case of the permutohedral variety, which is also known as the toric variety associated to Weyl chambers. In this case, we obtain an explicit decomposition of the permutohedron (the moment map image of the permutohedral variety) into combinatorial $(n-1)$-cubes, and also give a geometric interpretation of this decomposition by expressing the cohomology class of the permutohedral variety in $\Fl(\C^n)$ as a sum of the cohomology classes of a certain set of Richardson varieties. 
\end{abstract}

\bigskip

\tableofcontents

\section{Introduction}

In this manuscript we study the volume polynomials of Hessenberg varieties and other subvarieties of the flag variety $\Fl(\C^n)$, with particular attention to the relationship between these polynomials and the volume polynomials of related polytopes. The terminology used in this circle of ideas can be confusing, so we take a moment to clarify the meaning of these terms. The main point is that, on the one hand, there is a notion (made precise in Section~\ref{sec: volume}) of a volume polynomial of a cohomology class or, in the context of this paper, a subvariety $Y$ of $\Fl(\C^n)$. This notion is closely related to that of \emph{degree} in the sense of algebraic geometry, and thus encodes information about the topology of $Y$. For the purpose of this introduction only, we refer to this concept as the \emph{geometric} volume polynomial. \footnote{As far as we are aware, this is not standard terminology, so we avoid use of this term elsewhere in the manuscript.} On the other hand, given a convex polytope $\Delta$ in a Euclidean space $\R^n$, we also have the usual notion of the (Euclidean, or more accurately, an appropriately normalized integral Euclidean) volume of $\Delta$. For the purpose of this introduction only, we refer to this as the \emph{combinatorial} volume. A priori, these two notions of volume are unrelated. However, when the ambient space is the flag variety $\Fl(\C^n)$ and the polytope in question is the Gelfand-Zetlin polytope $\GZ(\lambda)$, then there are beautiful relationships between the geometric volume polynomials of various subvarieties of $\Fl(\C^n)$ and the combinatorial volumes of certain faces of $\GZ(\lambda)$. 
We are not the first to observe or exploit these relationships; indeed, part of the motivation for this paper was the work of Kiritchenko, Smirnov, and Timorin and others (cf. \cite{KST12} and references therein) which relate the cohomology classes of Schubert varieties and products of Schubert varieties with certain faces of $\GZ(\lambda)$. The new direction explored in this manuscript is the focus on the Hessenberg (sub)varieties in $\Fl(\C^n)$.

Hessenberg varieties are a family of subvarieties of the flag variety that arise naturally in  combinatorics, geometry, and representation theory, and is an active area of current research, in part due to the connections between the regular semisimple Hessenberg varieties $\Hess(S,h)$ (defined precisely in Section~\ref{sec: background}) and the famous Stanley-Stembridge conjecture in graph theory and the theory of symmetric functions (see e.g. \cite{ShareshianWachs2016, BrosnanChow2015, Guay-Paquet2016, HaradaPrecup2017}). 
Most recently, volume polynomials also appeared in the study of Hessenberg varieties through the work of the second and third authors together with Abe, Murai, and Sato \cite{AHMMS}, in which they use the theory of hyperplane arrangements to derive explicit generators and relations for the cohomology rings of regular nilpotent Hessenberg varieties in certain Lie types. In related work, the first author together with Abe, DeDieu, and Galetto studied the volume polynomials of both regular semisimple and regular nilpotent Hessenberg varieties in relation to the study of Newton-Okounkov bodies \cite{ADGH}. It can be seen from the explicit formula for the volume polynomials of regular nilpotent Hessenberg varieties given in \cite{AHMMS} that it is intimately related to the volume of Gelfand-Zetlin polytopes. Therefore, the time seemed ripe for a careful study of the volume polynomials of Hessenberg and related subvarieties. In this manuscript, we focus exclusively on the case of the regular semisimple Hessenberg varieties $\Hess(S,h)$.

We now explain the results contained in this manuscript in some more detail. Since the notation and terminology is technical, we only give rough statements here; precise statements can be found in the body of the manuscript. Let $\lambda = (\lambda_1, \lambda_2, \cdots, \lambda_n) \in \R^n$, where $\lambda_1 > \lambda_2 > \cdots > \lambda_n$. The Gelfand-Zetlin polytope $\GZ(\lambda)$ is a polytope in $\R^{n(n-1)/2}$ defined by certain inequalities depending on the parameter $\lambda$ (see Section~\ref{sec: background}). Thus, the combinatorial volume of $\GZ(\lambda)$ may be viewed as a polynomial in the variables $\lambda_1, \ldots, \lambda_n$. It turns out that the geometric volume polynomial $\vol_\lambda(\Hess(S,h))$ of a regular semisimple Hessenberg variety in $\Fl(\C^n)$ can be obtained by taking certain derivatives, with respect to the $\lambda_i$'s, of the combinatorial volume polynomial $\vol(\GZ(\lambda))$ of $\GZ(\lambda)$ \cite{ADGH, AHMMS}. 
Straightforward computations in small-$n$ cases lead us to believe that this geometric volume $\vol_\lambda(\Hess(S,h))$ can alternately be expressed as a non-negative linear combination of the combinatorial volumes of certain faces of $\GZ(\lambda)$. We were thus lead to ask:
\begin{itemize}
\item Is there an explicit formula expressing $\vol_\lambda(\Hess(S,h))$ as a sum of volumes of faces of $\GZ(\lambda)$?
\item Is there a manifestly positive, combinatorial formula for the coefficients of $\vol_\lambda(\Hess(S,h))$ when expressed in the basis of monomials in the $\alpha_i:=\lambda_i - \lambda_{i+1}$? 
\end{itemize} 
We record positive answers to the above two questions in Section~\ref{sec: volume}. It turns out that the answer to the first question can be obtained as a straightforward consequence of results of Anderson-Tymoczko and Kiritchenko-Smirnov-Timorin \cite{AT, KST12}, 
as we record in Theorem~\ref{thm: vol Hess 1}. In addition, we prove the following. 

\begin{enumerate} 
\item In Section~\ref{sec: combinatorics} we prove two combinatorial results about the faces of Gelfand-Zetlin polytopes. The first result is a generalization of a result of Postnikov \cite{Post}, and is a manifestly positive, combinatorial formula for the coefficients of the combinatorial volume of any face $F$ of $\GZ(\lambda)$ in the basis of monomials in the $\alpha_i=\lambda_i-\lambda_{i+1}$ (Proposition~\ref{prop:face of GZ}). The second result is a generalization of a formula of Kiritchenko, Timorin, and Smirnov, and is a linear relation between the volumes of several closely related faces of $\GZ(\lambda)$ (Proposition~\ref{prop:x-relation}). 
\item In Section~\ref{sec: volume} we use the results in Section~\ref{sec: combinatorics} to answer the second question above, namely, we give a manifestly positive formula for the coefficients of $\vol_\lambda(\Hess(S,h))$ with respect to the basis of monomials in the $\alpha_i$'s (Theorem~\ref{thm:vol_Hess 2}). 
\item Our considerations in Section~\ref{sec: volume} motivated us to look at the special case of the permutohedral variety, which is a special case of a regular semisimple Hessenberg variety and which is also known as the toric variety associated to the Weyl chambers. From standard symplectic geometry it is well-known that the moment map image of the permutohedral variety is the so-called permutohedron. In Section~\ref{sec:decomposing} we use a certain well-chosen subset of the faces of $\GZ(\lambda)$ to give a decomposition of the permutohedron into combinatorial cubes (Theorem~\ref{prop:perm-cubes}). 
\item As noted above, the permutohedron has a geometric counterpart, namely, the permutohedral variety. In Section~\ref{section: Richardson} we show that our combinatorial result in Section~\ref{sec:decomposing} has a geometric interpretation, namely, that the cohomology class of the permutohedral variety in $\Fl(\C^n)$ is a sum of classes, where the summands are in one-to-one correspondence with the cubes appearing in the decomposition given in Theorem~\ref{prop:perm-cubes}. 
\end{enumerate}

We now outline the contents of the paper.  In Section~\ref{sec: background} we briefly recount the necessary background:  Hessenberg varieties, Schubert varieties, and some combinatorics associated to Gelfand-Zetlin polytopes. In Section~\ref{sec: combinatorics} we derive some combinatorial results concerning the faces of Gelfand-Zetlin polytopes. In addition to being of interest in their own right, these results will be used in the arguments in the next sections. In Section~\ref{sec: volume} we define the (geometric) volume polynomial and answer the two questions -- as noted above -- which motivated this paper. The considerations of the special case of the permutohedral variety are contained in the last two sections: the combinatorial view is given in Section~\ref{sec:decomposing}, and the geometric perspective is discussed in Section~\ref{section: Richardson}.

\bigskip

\noindent \textbf{Acknowledgements.} 
We are grateful to Kiumars Kaveh for useful conversations. We are also grateful to Shizuo Kaji for communicating to us the results of his computer computations (cf. Remark~\ref{remark: kaji}). 
The first author is supported in part by an NSERC Discovery Grant and a Canada Research Chair (Tier 2) Award. She also gratefully acknowledges the support and hospitality of the Osaka City University Advanced Mathematics Institute for a fruitful visit in Fall 2017, during which much of this work was conducted. 
The second author is partially supported by JSPS Grant-in-Aid for JSPS Fellows:17J04330.
The third author is supported in part by JSPS Grant-in-Aid for Scientific Research 16K05152.  
The fourth author acknowledges the support of Basic Science Research Program through the National Research Foundation of Korea (NRF) funded by the Ministry of Science, ICT \& Future Planning (NRF-2018R1A6A3A11047606).

\section{Background on Hessenberg and Schubert varieties} \label{sec: background}

This section contains the necessary background for the remainder of the paper. We begin with a brief outline of its contents. 
We first introduce the main geometric characters appearing in this manuscript, namely, the regular semisimple Hessenberg varieties and the Schubert varieties. Both are subvarieties of the flag variety $\Fl(\C^n)$. We also record a result of Anderson-Tymoczko \cite{AT} which expresses the cohomology\footnote{Throughout this document (unless explicitly stated otherwise) we work with cohomology rings with coefficients in $\R$.} class associated to (i.e., the Poincar\'e dual of) a regular semisimple Hessenberg variety in terms of those of Schubert varieties. We then introduce the main combinatorial object of the paper, namely, the Gelfand-Zetlin polytope and its faces. Next, we recall the results of Kiritchenko, Smirnov, and Timorin which relates the cohomology classes of (products of) Schubert varieties with certain faces of the Gelfand-Zetlin polytope. Finally, we state in Corollary~\ref{cor: Hess AT KST} a formula relating the Poincar\'e dual of the regular semisimple Hessenberg variety to a certain subset of faces of the Gelfand-Zetlin polytope, which we analyze further in Section~\ref{sec: volume}.

We begin with the definition of regular semisimple Hessenberg varieties. 
Let $n$ be a positive integer.
The {\bf (full) flag variety} $\Fl(\C^n)$ in $\C^n$ is the collection of nested linear subspaces
$$
V_{\bullet}:=(V_1 \subset V_2 \subset \cdots \subset V_n=\C^n)
$$
where each $V_i$ is an $i$-dimensional subspace in $\C^n$. Hessenberg varieties are subvarieties of $\Fl(\C^n)$ defined as follows. 
Let us fix the notation 
\[
[n] := \{1,2,\ldots, n\}.
\]
A Hessenberg function is a function $h: [n] \to [n]$ satisfying $h(i+1) \geq h(i)$ for all $i$ with $1 \leq i \leq n-1$ and $h(i) \geq i$ for all $i$ with $1 \leq i \leq n$. We frequently denote a Hessenberg function by listing its values in sequence, i.e., $h=(h(1), h(2), \ldots, h(n))$. We also visualize Hessenberg functions as a collection of boxes as follows. Consider the set of $n \times n$ many boxes arranged as in a square matrix. We color the $(i,j)$-th box if $i \leq h(j)$, or in other words, for each $j$ with $1 \leq j \leq n$, we color $h(j)$ many boxes in the $j$-th column starting from the top. The colored boxes are then a visual representation of $h$. 

\begin{example}\label{example: box diagram} 
Let $n=5$ and let $h=(3,3,4,5,5)$. The corresponding set of colored boxes is illustrated in Figure~\ref{picture:h=(3,3,4,5,5)}. 
\begin{figure}[h]
\begin{center}
\begin{picture}(75,65)
\put(0,63){\colorbox{gray}}
\put(0,67){\colorbox{gray}}
\put(0,72){\colorbox{gray}}
\put(4,63){\colorbox{gray}}
\put(4,67){\colorbox{gray}}
\put(4,72){\colorbox{gray}}
\put(9,63){\colorbox{gray}}
\put(9,67){\colorbox{gray}}
\put(9,72){\colorbox{gray}}

\put(15,63){\colorbox{gray}}
\put(15,67){\colorbox{gray}}
\put(15,72){\colorbox{gray}}
\put(19,63){\colorbox{gray}}
\put(19,67){\colorbox{gray}}
\put(19,72){\colorbox{gray}}
\put(24,63){\colorbox{gray}}
\put(24,67){\colorbox{gray}}
\put(24,72){\colorbox{gray}}

\put(30,63){\colorbox{gray}}
\put(30,67){\colorbox{gray}}
\put(30,72){\colorbox{gray}}
\put(34,63){\colorbox{gray}}
\put(34,67){\colorbox{gray}}
\put(34,72){\colorbox{gray}}
\put(39,63){\colorbox{gray}}
\put(39,67){\colorbox{gray}}
\put(39,72){\colorbox{gray}}

\put(45,63){\colorbox{gray}}
\put(45,67){\colorbox{gray}}
\put(45,72){\colorbox{gray}}
\put(49,63){\colorbox{gray}}
\put(49,67){\colorbox{gray}}
\put(49,72){\colorbox{gray}}
\put(54,63){\colorbox{gray}}
\put(54,67){\colorbox{gray}}
\put(54,72){\colorbox{gray}}

\put(60,63){\colorbox{gray}}
\put(60,67){\colorbox{gray}}
\put(60,72){\colorbox{gray}}
\put(64,63){\colorbox{gray}}
\put(64,67){\colorbox{gray}}
\put(64,72){\colorbox{gray}}
\put(69,63){\colorbox{gray}}
\put(69,67){\colorbox{gray}}
\put(69,72){\colorbox{gray}}

\put(0,48){\colorbox{gray}}
\put(0,52){\colorbox{gray}}
\put(0,57){\colorbox{gray}}
\put(4,48){\colorbox{gray}}
\put(4,52){\colorbox{gray}}
\put(4,57){\colorbox{gray}}
\put(9,48){\colorbox{gray}}
\put(9,52){\colorbox{gray}}
\put(9,57){\colorbox{gray}}

\put(15,48){\colorbox{gray}}
\put(15,52){\colorbox{gray}}
\put(15,57){\colorbox{gray}}
\put(19,48){\colorbox{gray}}
\put(19,52){\colorbox{gray}}
\put(19,57){\colorbox{gray}}
\put(24,48){\colorbox{gray}}
\put(24,52){\colorbox{gray}}
\put(24,57){\colorbox{gray}}

\put(30,48){\colorbox{gray}}
\put(30,52){\colorbox{gray}}
\put(30,57){\colorbox{gray}}
\put(34,48){\colorbox{gray}}
\put(34,52){\colorbox{gray}}
\put(34,57){\colorbox{gray}}
\put(39,48){\colorbox{gray}}
\put(39,52){\colorbox{gray}}
\put(39,57){\colorbox{gray}}

\put(45,48){\colorbox{gray}}
\put(45,52){\colorbox{gray}}
\put(45,57){\colorbox{gray}}
\put(49,48){\colorbox{gray}}
\put(49,52){\colorbox{gray}}
\put(49,57){\colorbox{gray}}
\put(54,48){\colorbox{gray}}
\put(54,52){\colorbox{gray}}
\put(54,57){\colorbox{gray}}

\put(60,48){\colorbox{gray}}
\put(60,52){\colorbox{gray}}
\put(60,57){\colorbox{gray}}
\put(64,48){\colorbox{gray}}
\put(64,52){\colorbox{gray}}
\put(64,57){\colorbox{gray}}
\put(69,48){\colorbox{gray}}
\put(69,52){\colorbox{gray}}
\put(69,57){\colorbox{gray}}

\put(0,33){\colorbox{gray}}
\put(0,37){\colorbox{gray}}
\put(0,42){\colorbox{gray}}
\put(4,33){\colorbox{gray}}
\put(4,37){\colorbox{gray}}
\put(4,42){\colorbox{gray}}
\put(9,33){\colorbox{gray}}
\put(9,37){\colorbox{gray}}
\put(9,42){\colorbox{gray}}

\put(15,33){\colorbox{gray}}
\put(15,37){\colorbox{gray}}
\put(15,42){\colorbox{gray}}
\put(19,33){\colorbox{gray}}
\put(19,37){\colorbox{gray}}
\put(19,42){\colorbox{gray}}
\put(24,33){\colorbox{gray}}
\put(24,37){\colorbox{gray}}
\put(24,42){\colorbox{gray}}

\put(30,33){\colorbox{gray}}
\put(30,37){\colorbox{gray}}
\put(30,42){\colorbox{gray}}
\put(34,33){\colorbox{gray}}
\put(34,37){\colorbox{gray}}
\put(34,42){\colorbox{gray}}
\put(39,33){\colorbox{gray}}
\put(39,37){\colorbox{gray}}
\put(39,42){\colorbox{gray}}

\put(45,33){\colorbox{gray}}
\put(45,37){\colorbox{gray}}
\put(45,42){\colorbox{gray}}
\put(49,33){\colorbox{gray}}
\put(49,37){\colorbox{gray}}
\put(49,42){\colorbox{gray}}
\put(54,33){\colorbox{gray}}
\put(54,37){\colorbox{gray}}
\put(54,42){\colorbox{gray}}

\put(60,33){\colorbox{gray}}
\put(60,37){\colorbox{gray}}
\put(60,42){\colorbox{gray}}
\put(64,33){\colorbox{gray}}
\put(64,37){\colorbox{gray}}
\put(64,42){\colorbox{gray}}
\put(69,33){\colorbox{gray}}
\put(69,37){\colorbox{gray}}
\put(69,42){\colorbox{gray}}

\put(30,18){\colorbox{gray}}
\put(30,22){\colorbox{gray}}
\put(30,27){\colorbox{gray}}
\put(34,18){\colorbox{gray}}
\put(34,22){\colorbox{gray}}
\put(34,27){\colorbox{gray}}
\put(39,18){\colorbox{gray}}
\put(39,22){\colorbox{gray}}
\put(39,27){\colorbox{gray}}

\put(45,18){\colorbox{gray}}
\put(45,22){\colorbox{gray}}
\put(45,27){\colorbox{gray}}
\put(49,18){\colorbox{gray}}
\put(49,22){\colorbox{gray}}
\put(49,27){\colorbox{gray}}
\put(54,18){\colorbox{gray}}
\put(54,22){\colorbox{gray}}
\put(54,27){\colorbox{gray}}

\put(60,18){\colorbox{gray}}
\put(60,22){\colorbox{gray}}
\put(60,27){\colorbox{gray}}
\put(64,18){\colorbox{gray}}
\put(64,22){\colorbox{gray}}
\put(64,27){\colorbox{gray}}
\put(69,18){\colorbox{gray}}
\put(69,22){\colorbox{gray}}
\put(69,27){\colorbox{gray}}

\put(45,3){\colorbox{gray}}
\put(45,7){\colorbox{gray}}
\put(45,12){\colorbox{gray}}
\put(49,3){\colorbox{gray}}
\put(49,7){\colorbox{gray}}
\put(49,12){\colorbox{gray}}
\put(54,3){\colorbox{gray}}
\put(54,7){\colorbox{gray}}
\put(54,12){\colorbox{gray}}

\put(60,3){\colorbox{gray}}
\put(60,7){\colorbox{gray}}
\put(60,12){\colorbox{gray}}
\put(64,3){\colorbox{gray}}
\put(64,7){\colorbox{gray}}
\put(64,12){\colorbox{gray}}
\put(69,3){\colorbox{gray}}
\put(69,7){\colorbox{gray}}
\put(69,12){\colorbox{gray}}

\put(0,0){\framebox(15,15)}
\put(15,0){\framebox(15,15)}
\put(30,0){\framebox(15,15)}
\put(45,0){\framebox(15,15)}
\put(60,0){\framebox(15,15)}
\put(0,15){\framebox(15,15)}
\put(15,15){\framebox(15,15)}
\put(30,15){\framebox(15,15)}
\put(45,15){\framebox(15,15)}
\put(60,15){\framebox(15,15)}
\put(0,30){\framebox(15,15)}
\put(15,30){\framebox(15,15)}
\put(30,30){\framebox(15,15)}
\put(45,30){\framebox(15,15)}
\put(60,30){\framebox(15,15)}
\put(0,45){\framebox(15,15)}
\put(15,45){\framebox(15,15)}
\put(30,45){\framebox(15,15)}
\put(45,45){\framebox(15,15)}
\put(60,45){\framebox(15,15)}
\put(0,60){\framebox(15,15)}
\put(15,60){\framebox(15,15)}
\put(30,60){\framebox(15,15)}
\put(45,60){\framebox(15,15)}
\put(60,60){\framebox(15,15)}
\end{picture}
\end{center}
\caption{Colored boxes for $h=(3,3,4,5,5).$ }
\label{picture:h=(3,3,4,5,5)}
\end{figure}
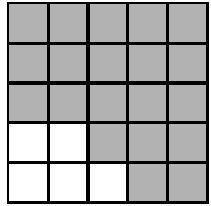 

\end{example}

Let $X: \C^n \to \C^n$ be a linear operator on $\C^n$. The (type A) \textbf{Hessenberg variety $\Hess(X,h)$ associated to the choice of $X$ and $h$} is defined as the following subvariety of $\Fl(\C^n)$:
\begin{equation}\label{eq: def Hess}
\Hess(X,h) :=\{V_{\bullet} \in \Fl(\C^n) \mid XV_i \subset V_{h(i)} \ {\rm for} \ i=1,2,\ldots,n \} \subseteq \Fl(\C^n). 
\end{equation}
In this manuscript we focus on the case when $X=S$ is a (choice of) regular semisimple operator, i.e., a diagonalizable matrix with distinct eigenvalues. In this situation we call $\Hess(S,h)$ a \textbf{regular semisimple Hessenberg variety}. From \cite[Theorem 6]{DeMa-Pr-Sh} we know that $\Hess(S,h)$ is smooth and equidimensional, and $\dim_{\C}(\Hess(S,h)) = \sum_{j=1}^n (h(j)-j)$. 
From the definition~\eqref{eq: def Hess} it is straightforward to see that when $h=(n,n,\ldots,n)$ is the maximal Hessenberg function, then the conditions given in~\eqref{eq: def Hess} are automatically satisfied and thus $\Hess(S,(n,n,\ldots,n)) = \Fl(\C^n)$. On the other hand, when $h=(2,3,4,\ldots,n,n)$ is the minimal Hessenberg function satisfying $h(j) \geq j+1$ for $1\leq j \leq n-1$, then it is known that $\Hess(S,h)$ is the toric variety associated to the fan of the type A Weyl chambers \cite[Theorem 11]{DeMa-Pr-Sh}. This regular semisimple Hessenberg variety is also known as the \textbf{permutohedral variety}. The permutohedral variety is a central object of interest in Sections~\ref{sec:decomposing} and~\ref{section: Richardson}.

Now we recall the definition of Schubert and opposite Schubert varieties and some of their basic properties. Let $G = \GL_n(\C)$ the general linear group and let $B$ and $B^{-}$ be the subgroups of upper-triangular and lower-triangular matrices in $G$ respectively. 
It is well-known that the flag variety $\Fl(\C^n)$ can be identified with the homogeneous space $G/B$. 
Let $\mathfrak{S}_n$ denote the permutation group on $n$ letters. For an element $w \in \mathfrak{S}_n$ we denote its one-line notation by either 
\[
w(1) \, w(2) \, \cdots \, w(n) \quad \textup{ or } \quad [w(1), w(2), \cdots, w(n)]. 
\]
For $w \in \mathfrak{S}_n$, we let $X_w$ denote the {\bf Schubert variety} associated to $w$, defined to be the closure of the $B$-orbit of the permutation flag $wB/B$ and let $X^w$ denote the \textbf{opposite Schubert variety}, defined to be the closure of the $B^-$-orbit of $wB/B$. Since $B^- = w_0 B w_0$ where $w_0$ is the longest element in $\mathfrak{S}_n$, we have the relation $X^w = w_0(X_{w_0 w})$. 
It is well-known that any opposite Schubert variety $X^w$ is irreducible and the codimension of $X^w$ in the flag variety $\Fl(\C^n)$ is the length $\ell(w)$ of $w$. Similarly $X_w$ is irreducible and its dimension is $\ell(w)$ for any $w\in \mathfrak{S}_n$. 
The Poincar\'e dual of the Schubert variety $X^w$ considered as a cycle of $\Fl(\C^n)$, which we denote by $[X^w] \in H^{2\ell(w)}(\Fl(\C^n))$, is called the {\bf Schubert class} corresponding to $w$. 
It is known that $[X^w] = [w_0(X_{w_0 w})] = [X_{w_0 w}]$, and also that the Schubert classes form an additive basis of the cohomology $H^*(\Fl(\C^n))$. (This result is also valid for the cohomology with coefficients in $\Z$.)

We are now in a position to state a result of Anderson and Tymoczko \cite{AT} which gives a formula for the cohomology class (i.e. Poincar\'e dual) of $\Hess(S,h)$ in terms of those of Schubert varieties. To state the precise result, we first need to define a certain permutation associated to each Hessenberg function $h$ as follows. As the base case we define 
\[
w_h(1) := n- h(1)+1. 
\]
We then inductively define 
\begin{align} \label{eq:wh}
\wh(i)&=\text{$(n-h(i)+1)$-th number of a set $[n]\setminus\{\wh(1),\ldots,\wh(i-1)\}$}. 
\end{align}

Anderson and Tymoczko show that the Poincar\'e dual of a regular semisimple Hessenberg variety $\Hess(S,h)$ considered as a cycle in the flag variety $\Fl(\C^n)$, which we denote as $[\Hess(S,h)]$, can be written in terms of opposite Schubert classes $[X^w]$ as follows.

\begin{theorem} (\cite[Corollary~3.3 (a) and equation~(14)]{AT}) \label{theorem:AT}
Let $\Hess(S,h)$ be a regular semisimple Hessenberg variety and let $\{X^w \}_{w \in \mathfrak{S}_n}$ denote the opposite Schubert varieties.
Then the class $[\Hess(S,h)]$ can be expressed as
\begin{equation}\label{eq: AT formula}
[\Hess(S,h)]=\sum_{\substack{u,v \in \mathfrak{S}_n \\ v^{-1}u=\wh \\ \ell(u)+\ell(v)=\ell(\wh)}}[X^u][X^{w_0vw_0}] \in H^*(\Fl(\C^n)),
\end{equation}
where $w_0$ is the longest element in $\mathfrak{S}_n$ and $\wh$ is the permutation defined above. 
\end{theorem}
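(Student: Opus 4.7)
The plan is to realize $\Hess(S,h)$ as a transverse preimage inside $\Fl(\C^n) \times \Fl(\C^n)$ and then to apply a Künneth-type Schubert calculus in the two-sided flag manifold. The formula~\eqref{eq: AT formula} has precisely the shape of a sum over reduced factorizations $w_h = v^{-1} \cdot u$, which is a signature of such a double-Schubert argument.

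First I would introduce the Hessenberg incidence locus
\[
\mathcal{Y}_h := \{(V_\bullet, V_\bullet') \in \Fl(\C^n)\times\Fl(\C^n) \mid V'_i \subset V_{h(i)} \text{ for } 1 \leq i \leq n\}
\]
together with the graph map $\Gamma_S : \Fl(\C^n) \to \Fl(\C^n)\times\Fl(\C^n)$ defined by $\Gamma_S(V_\bullet) := (V_\bullet, SV_\bullet)$. By construction, $\Hess(S,h) = \Gamma_S^{-1}(\mathcal{Y}_h)$. Since $S$ is regular semisimple, a generic flag $V_\bullet$ is in the ``largest'' relative position with $SV_\bullet$, so $\Gamma_S$ is transverse to $\mathcal{Y}_h$ along $\Hess(S,h)$; in particular both sides are smooth of complementary codimension and $[\Hess(S,h)] = \Gamma_S^*[\mathcal{Y}_h]$.

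Next I would identify $\mathcal{Y}_h$ combinatorially. The diagonal $\GL_n(\C)$-action on $\Fl \times \Fl$ has orbits indexed by the relative position $w \in \mathfrak{S}_n$ of the two flags, and the incidence condition $V'_i \subset V_{h(i)}$ translates directly into a Bruhat-order bound on $w$. The permutation $\wh$ defined inductively in~\eqref{eq:wh} is engineered precisely so that $\mathcal{Y}_h$ is the closure of the single orbit of relative position $\wh$, equivalently the set of pairs with relative position $\leq \wh$ in Bruhat order. Once this identification is in hand, the class of $\mathcal{Y}_h$ can be computed by the classical Künneth decomposition of the closure of a relative-position orbit:
\[
[\mathcal{Y}_h] \;=\; \sum_{\substack{u,v \in \mathfrak{S}_n \\ v^{-1}u = \wh \\ \ell(u) + \ell(v) = \ell(\wh)}} [X^u] \otimes [X^v] \;\in\; H^*(\Fl(\C^n)) \otimes H^*(\Fl(\C^n)),
\]
where the sum ranges over reduced factorizations $\wh = v^{-1}\cdot u$. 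Finally, pulling back via $\Gamma_S$ and carefully tracking the flag conventions on the two factors gives the stated formula, with the twist $v \mapsto w_0 v w_0$ reflecting the fact that the opposite Schubert classes in the second factor of $\Fl \times \Fl$ are converted by $\Gamma_S^*$ into opposite Schubert classes in $\Fl(\C^n)$ with respect to the shifted reference flag $S \cdot E_\bullet$.

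The hardest step will be the combinatorial identification in the middle paragraph: one must verify that the inductive recipe~\eqref{eq:wh} really produces the Bruhat-maximal permutation $w$ such that every pair of flags in relative position $w$ satisfies $V'_i \subset V_{h(i)}$ for all $i$, and that the closure $\mathcal{Y}_h$ is reduced and of the correct codimension $\ell(\wh) = \sum_j (h(j)-j)$ so that the Künneth decomposition above is valid. A secondary delicate point is the bookkeeping behind the appearance of $w_0 v w_0$ rather than $v$ in the second factor; although the $\GL_n(\C)$-action on $\Fl(\C^n)$ is homotopically trivial, the identification of Schubert classes transported by $\Gamma_S$ requires one to pass between the flag $E_\bullet$ adapted to $B$ and its $w_0$-conjugate adapted to $B^-$, which is exactly where the conjugation $v \mapsto w_0 v w_0$ enters.
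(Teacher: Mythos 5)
First, a point of order: the paper does not prove Theorem~\ref{theorem:AT} at all --- it is quoted from Anderson--Tymoczko, so there is no internal proof to measure you against. Your sketch is, in geometric disguise, essentially their argument: they realize $\Hess(X,h)$ as a Fulton degeneracy locus for the maps $V_i\hookrightarrow\C^n\xrightarrow{X}\C^n/V_{h(i)}$, obtain the double Schubert polynomial $\mathfrak{S}_{\wh}(x;y)$ from the degeneracy-locus formula, and expand it by the Cauchy identity over reduced factorizations $v^{-1}u=\wh$ --- the algebraic form of your K\"unneth decomposition. So the architecture is right, but three of your concrete claims fail as written.

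(a) You assert $\ell(\wh)=\sum_j(h(j)-j)$ and that $\mathcal{Y}_h$ is the closure of the orbit of relative position $\wh$; both are off by a $w_0$. Since $[\Hess(S,h)]$ lies in $H^{2\ell(\wh)}$ and $\dim_\C\Hess(S,h)=\sum_j(h(j)-j)$, in fact $\ell(\wh)=\sum_j(n-h(j))$, the \emph{codimension}. Correspondingly, the fiber of $\mathcal{Y}_h$ over the first factor is an iterated projective bundle of dimension $\sum_j(h(j)-j)$, so the open orbit of $\mathcal{Y}_h$ has relative position of that length, namely $w_0\wh$, not $\wh$: for $n=3$, $h=(2,3,3)$ one has $\wh=213$ while the Bruhat-maximal relative position compatible with $V_1'\subset V_2$ is $231=w_0\wh$. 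The factorizations of $\wh$ then enter because the K\"unneth class of $\overline{O_w}$ is indexed by reduced factorizations of $w_0w$ (compare the diagonal, $w=e$), not of $w$. (b) Your transversality step is a non sequitur: genericity of the relative position of $(V,SV)$ for generic $V$ concerns the open stratum, whereas transversality must be verified along $\Hess(S,h)$, which lies entirely in the degenerate locus; Kleiman's theorem is unavailable because $\Gamma_S$ is not equivariant for the diagonal $\GL_n(\C)$-action preserving $\mathcal{Y}_h$. The input that actually closes this gap is the expected-dimension statement $\dim\Hess(S,h)=\sum_j(h(j)-j)$ of De Mari--Procesi--Shayman together with smoothness, which is precisely what \cite{AT} feed into the degeneracy-locus formula. (c) The twist $v\mapsto w_0vw_0$ cannot be produced by $\Gamma_S^*$: the second component of $\Gamma_S$ is the action of $S\in\GL_n(\C)$ on $\Fl(\C^n)$, which is homotopic to the identity, so $\Gamma_S^*(\alpha\otimes\beta)=\alpha\cup\beta$ with no correction. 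The conjugation must already be present in the K\"unneth decomposition of $[\mathcal{Y}_h]$, where Poincar\'e duality pairs $[X^u]$ with $[X_u]=[X^{w_0u}]$.
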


We illustrate the above theorem in the case of some small-$n$ permutohedral varieties.

\begin{example} \label{exam:n=3,4Permutohedral variety}
Let $h=(2,3,4,\ldots, n, n)$. This corresponds to the case of the permutohedral variety mentioned above. In this case, it is straightforward to see that the permutation $w_h$ defined above is the longest element in the subgroup $\mathfrak{S}_{n-1}$ of $\mathfrak{S}_n$, i.e., 
\[
w_h = [ n-1 , \, n-2 \, , \cdots \, 2 , \, 1 ,\, n] 
\]
in one-line notation. We now compute the cohomology classes $[\Hess(S,h)]$ in the cases $n=3$ and $n=4$. 
\begin{enumerate} 
\item[(a)] Suppose $n=3$. Then $h=(2,3,3)$ and $w_h = [ 2, 1, 3 ] = s_1$. According to Theorem~\ref{theorem:AT} we need to find pairs $v,u \in \mathfrak{S}_3$ such that $v^{-1}u = s_1$ and $\ell(u)+\ell(v) = \ell(w_h)=\ell(s_1)=1$, but there are only two such choices: $v=s_1$ and $u=\mathrm{id}$, or, $v=\mathrm{id}$ and $u=s_1$. Therefore, by Theorem~\ref{theorem:AT} we conclude, in this case, that 
\[
[\Hess(S,h)] =  [ X^{s_1}][X^{\mathrm{id}}] + [X^{\mathrm{id}}] [X^{s_2}] = [X^{213}] + [X^{132}].
\]
\item[(b)] Now suppose $n=4$. Consider $h=(2,3,4,4)$. It is straightforward to compute $w_h=[ 3, 2, 1, 4] = s_1s_2s_1=s_2s_1s_2$. Using Theorem~\ref{theorem:AT} again we have 
\begin{align*}
[\Hess(S,h)]=&[X^{\id}][X^{w_0(s_1s_2s_1)^{-1}w_0}]+[X^{s_1}][X^{w_0(s_1s_2)^{-1}w_0}]+[X^{s_2}][X^{w_0(s_2s_1)^{-1}w_0}]\\
&+[X^{s_2s_1}][X^{w_0(s_1)^{-1}w_0}]+[X^{s_1s_2}][X^{w_0(s_2)^{-1}w_0}]+[X^{s_1s_2s_1}][X^{\id}]\\
=&[X^{1432}]+[X^{s_1}][X^{1342}]+[X^{s_2}][X^{1423}]+[X^{3124}][X^{s_3}]+[X^{2314}][X^{s_2}]+[X^{3214}]. 
\end{align*}
Using Monk's formula \cite{Monk} (cf. also \cite[p.180]{fult97}) we obtain 
\begin{align*}
[X^{s_1}][X^{1342}]=&[X^{3142}]+[X^{2341}]  \\
[X^{s_2}][X^{1423}]=&[X^{2413}]  \\
[X^{s_3}][X^{3124}]=&[X^{4123}]+[X^{3142}]  \\
[X^{s_2}][X^{2314}]=&[X^{2413}]  
\end{align*}
which therefore yields the simplified formula 
\begin{equation*}
[\Hess(S,h)]=[X^{1432}]+[X^{2341}]+2[X^{2413}]+2[X^{3142}]+[X^{3214}]+[X^{4123}]. 
\end{equation*}
\end{enumerate} 

\end{example}

Next, we recall the fundamental combinatorial objects considered in this manuscript, namely, the Gelfand-Zetlin polytope and its faces. 
Let $\lambda=(\lambda_1,\lambda_2,\cdots,\lambda_n)\in\R^n$ with $\lambda_1>\lambda_2>\cdots>\lambda_n$.
The \textbf{Gelfand--Zetlin polytope $\GZ(\lambda)$} in $\R^{\m}$, where $\m = n(n-1)/2$, consists of the set of points $(x_{i,j})_{i,j} \in \R^{\m}$ satisfying the inequalities defined by the following diagram: 
\begin{equation}\label{eq:GZ}
    \begin{array}{cccccc}
        \lambda_1&  x_{1,2}    &    x_{1,3}   &     \cdots    & x_{1,n-1}	&x_{1,n}\\
                         &\lambda_2&  x_{2,3}       &     \ddots  &x_{2,n-1} 	&x_{2,n} \\
                         &                 &  \ddots	      &		\ddots    &\vdots	      &\vdots\\
                         &                 &                  &\lambda_{n-2}&x_{n-2,n-1}	 &x_{n-2,n}\\
                         &                 &                  &                      &\lambda_{n-1}&x_{n-1,n}\\
                         &                 &                  &                      &                      &\lambda_n
    \end{array}
\end{equation}
where any set of three variables found in a triangular arrangement as follows 
\begin{equation} \label{eq:abc}
    \begin{array}{cc}
        a& b\\
          &c
    \end{array}
\end{equation}
in the diagram~\eqref{eq:GZ} must satisfy $a \geq b \geq c$.
Here, we set $x_{j,j}:=\lambda_j$ for all $j=1,2,\ldots,n$.

We will represent faces of $\GZ(\lambda)$ by \textbf{face diagrams}. This representation was first introduced by~\cite{Ko2000} and we will use the modified version given in~\cite{KST12}, which we now review. Recall that a face of $\GZ(\lambda)$ is specified by a collection of equations, each of which is of the form either $a=b$ or $b=c$, where $a,b,c$ are three variables appearing in~\eqref{eq:GZ} in a triangular arrangement as in~\eqref{eq:abc}.
With this in mind, we define face diagrams as follows. 
Firstly, we replace each symbol (either $\lambda_j=x_{j,j}$ or $x_{i,j}$) appearing  in~\eqref{eq:GZ} by a dot. Secondly, given an equation $a=b$ (respectively $b=c$) as above, we graphically represent this $a=b$ by drawing a line segment connecting the corresponding dots; note that since $a$ and $b$ are adjacent in a row (respectively a column), this line segment is horizontal (respectively vertical), or in other words, ``east-west'' (respectively ``north-south''). A system of such equations, defining a face of $\GZ(\lambda)$, is hence represented by a collection of east-west and north-south line segments. We call this collection the \textbf{face diagram} (of that face). See Figure~\ref{fig:ex:face_diagram} for examples.

\begin{figure}[h]
\begin{center}
\begin{subfigure}[b]{.24\textwidth}
\begin{center}
    \begin{tikzpicture}[scale=0.6,font=\footnotesize]
    \tikzset{
    solid node/.style={circle,draw,inner sep=1.2,fill=black}
    }
        \node [solid node](01) at (0,3) {};
        \node [solid node](02) at (1,2) {};
        \node [solid node](03) at (2,1) {};
        \node [solid node](04) at (3,0) {};
        \node [solid node](11) at (1,3) {};
        \node [solid node](12) at (2,2) {};
        \node [solid node](13) at (3,1) {};
        \node [solid node](21) at (2,3) {};
        \node [solid node](22) at (3,2) {};
        \node [solid node](31) at (3,3) {};
        \path[thick] (01) edge (11);
    \end{tikzpicture}
    \end{center}
        \subcaption*{$\lambda_1=x_{1,2}$}
    \end{subfigure}
    \begin{subfigure}[b]{.24\textwidth}
        \begin{center}
    \begin{tikzpicture}[scale=0.6,font=\footnotesize]
    \tikzset{
    solid node/.style={circle,draw,inner sep=1.2,fill=black}
    }
        \node [solid node](01) at (0,3) {};
        \node [solid node](02) at (1,2) {};
        \node [solid node](03) at (2,1) {};
        \node [solid node](04) at (3,0) {};
        \node [solid node](11) at (1,3) {};
        \node [solid node](12) at (2,2) {};
        \node [solid node](13) at (3,1) {};
        \node [solid node](21) at (2,3) {};
        \node [solid node](22) at (3,2) {};
        \node [solid node](31) at (3,3) {};
        \path[thick] (04) edge (13);
        \end{tikzpicture}
        \end{center}
        \subcaption*{$x_{3,4}=\lambda_4$}
    \end{subfigure}
    \begin{subfigure}[b]{.37\textwidth}
    \begin{center}
    \begin{tikzpicture}[scale=0.6,font=\footnotesize]
        \tikzset{
        solid node/.style={circle,draw,inner sep=1.2,fill=black},
        hollow node/.style={circle,draw,inner sep=1.2}
        }
        \node [solid node](01) at (0,3) {};
        \node [solid node](02) at (1,2) {};
        \node [solid node](03) at (2,1) {};
        \node [solid node](04) at (3,0) {};
        \node [solid node](11) at (1,3) {};
        \node [solid node](12) at (2,2) {};
        \node [solid node](13) at (3,1) {};
        \node [solid node](21) at (2,3) {};
        \node [solid node](22) at (3,2) {};
        \node [solid node](31) at (3,3) {};
        \path (11) edge (01);
        \path (13) edge (04);
        \path (21) edge (11);
    \end{tikzpicture}
\end{center}
\subcaption*{$\lambda_1=x_{1,2}=x_{1,3}$; $x_{3,4}=\lambda_4$}
\end{subfigure}
    \end{center}
    \caption{Examples of face diagrams when $n$=4}\label{fig:ex:face_diagram}
\end{figure}
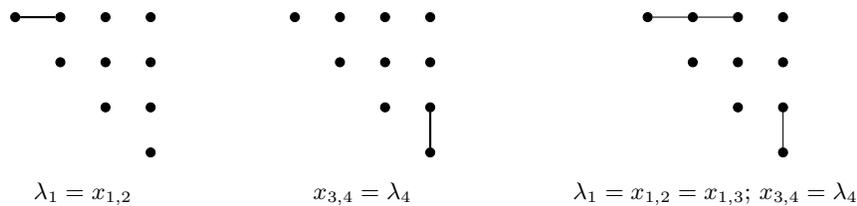

It may be helpful to the reader to explicitly visualize the facets on the polytope itself. In the example below we illustrate all $6$ facets of the $n=3$ Gelfand-Zetlin polytope as both a face diagram and as a 
$2$-dimensional facet on the $3$-dimensional Gelfand-Zetlin polytope.

\begin{example}\label{example: volume for n=3 h=233}
Let $n=3$ and $\lambda_1 > \lambda_2 > \lambda_3$. In this small case we use notation $x=x_{1,2}$, $y=x_{2,3}$ and $z=x_{1,3}$, and the polytope $\GZ(\lambda) \subseteq \R^3$ is given by the set of $(x,y,z) \in \R^3$ satisfying the inequalities associated to the diagram 
\begin{equation}\label{eq:GZ n=3}
    \begin{array}{ccc}
        \lambda_1&  x    &    z \\   
         & \lambda_2 & y \\
          & & \lambda_3.
     \end{array}
\end{equation}
In Figure~\ref{fig: facets and dots for n=3} we have drawn both the dot diagram and a shading of the corresponding facet on the Gelfand-Zetlin polytope for all $6$ facets of $\GZ(\lambda)$.

\begin{figure}[h]
\begin{center}
\begin{subfigure}[b]{.36\textwidth}
\centering
\begin{tikzpicture}[scale=.5]
	\fill[yellow] (3.5,2.3)--(3.5,-2)--(2,0)--(2,2)--cycle;
	\draw (0,0)--(1.5,-2)--(3.5,-2)--(3.5,2.3)--(2,2)--cycle;
	\draw (0,0)--(2,0)--(2,2);
	\draw (2,0)--(3.5,-2);
	\draw[dashed] (0,0)--(1.5,0.3)--(3.5,2.3);
	\draw[dashed] (1.5,0.3)--(1.5,-2);
	
	\tikzset{
        solid node/.style={circle,draw,inner sep=1.2,fill=black},
        hollow node/.style={circle,draw,inner sep=1.2}
        }
        \node [solid node](01) at (0+5,3-2) {};
        \node [solid node](02) at (1+5,2-2) {};
        \node [solid node](03) at (2+5,1-2) {};
        \node [solid node](11) at (1+5,3-2) {};
        \node [solid node](12) at (2+5,2-2) {};
        \node [solid node](21) at (2+5,3-2) {};
        \path (11) edge (01);
\end{tikzpicture}
\caption*{$\lambda_1=x$}
\end{subfigure}
\begin{subfigure}[b]{.36\textwidth}
\centering
\begin{tikzpicture}[scale=.5]
	\fill[yellow] (0,0)--(1.5,-2)--(1.5,0.3)--cycle;
	\draw (0,0)--(1.5,-2)--(3.5,-2)--(3.5,2.3)--(2,2)--cycle;
	\draw (0,0)--(2,0)--(2,2);
	\draw (2,0)--(3.5,-2);
	\draw[dashed] (0,0)--(1.5,0.3)--(3.5,2.3);
	\draw[dashed] (1.5,0.3)--(1.5,-2);
	
	\tikzset{
        solid node/.style={circle,draw,inner sep=1.2,fill=black},
        hollow node/.style={circle,draw,inner sep=1.2}
        }
        \node [solid node](01) at (0+5,3-2) {};
        \node [solid node](02) at (1+5,2-2) {};
        \node [solid node](03) at (2+5,1-2) {};
        \node [solid node](11) at (1+5,3-2) {};
        \node [solid node](12) at (2+5,2-2) {};
        \node [solid node](21) at (2+5,3-2) {};
        \path (11) edge (02);
\end{tikzpicture}
\caption*{$x=\lambda_2$}
\end{subfigure}
\begin{subfigure}[b]{.36\textwidth}
\centering
\begin{tikzpicture}[scale=.5]
	\fill[yellow] (0,0)--(2,2)--(2,0)--cycle;
	\draw (0,0)--(1.5,-2)--(3.5,-2)--(3.5,2.3)--(2,2)--cycle;
	\draw (0,0)--(2,0)--(2,2);
	\draw (2,0)--(3.5,-2);
	\draw[dashed] (0,0)--(1.5,0.3)--(3.5,2.3);
	\draw[dashed] (1.5,0.3)--(1.5,-2);
	
	\tikzset{
        solid node/.style={circle,draw,inner sep=1.2,fill=black},
        hollow node/.style={circle,draw,inner sep=1.2}
        }
        \node [solid node](01) at (0+5,3-2) {};
        \node [solid node](02) at (1+5,2-2) {};
        \node [solid node](03) at (2+5,1-2) {};
        \node [solid node](11) at (1+5,3-2) {};
        \node [solid node](12) at (2+5,2-2) {};
        \node [solid node](21) at (2+5,3-2) {};
        \path (02) edge (12);
\end{tikzpicture}
\caption*{$\lambda_2=y$}
\end{subfigure}
\begin{subfigure}[b]{.36\textwidth}
\centering
\begin{tikzpicture}[scale=.5]
	\fill[yellow] (1.5,0.3)--(3.5,2.3)--(3.5,-2)--(1.5,-2)--cycle;
	\draw (0,0)--(1.5,-2)--(3.5,-2)--(3.5,2.3)--(2,2)--cycle;
	\draw (0,0)--(2,0)--(2,2);
	\draw (2,0)--(3.5,-2);
	\draw[dashed] (0,0)--(1.5,0.3)--(3.5,2.3);
	\draw[dashed] (1.5,0.3)--(1.5,-2);
	
	\tikzset{
        solid node/.style={circle,draw,inner sep=1.2,fill=black},
        hollow node/.style={circle,draw,inner sep=1.2}
        }
        \node [solid node](01) at (0+5,3-2) {};
        \node [solid node](02) at (1+5,2-2) {};
        \node [solid node](03) at (2+5,1-2) {};
        \node [solid node](11) at (1+5,3-2) {};
        \node [solid node](12) at (2+5,2-2) {};
        \node [solid node](21) at (2+5,3-2) {};
        \path (03) edge (12);
\end{tikzpicture}
\caption*{$y=\lambda_3$}
\end{subfigure}
\begin{subfigure}[b]{.36\textwidth}
\centering
\begin{tikzpicture}[scale=.5]
	\fill[yellow] (0,0)--(1.5,0.3)--(3.5,2.3)--(2,2)--cycle;
	\draw (0,0)--(1.5,-2)--(3.5,-2)--(3.5,2.3)--(2,2)--cycle;
	\draw (0,0)--(2,0)--(2,2);
	\draw (2,0)--(3.5,-2);
	\draw[dashed] (0,0)--(1.5,0.3)--(3.5,2.3);
	\draw[dashed] (1.5,0.3)--(1.5,-2);
	
	\tikzset{
        solid node/.style={circle,draw,inner sep=1.2,fill=black},
        hollow node/.style={circle,draw,inner sep=1.2}
        }
        \node [solid node](01) at (0+5,3-2) {};
        \node [solid node](02) at (1+5,2-2) {};
        \node [solid node](03) at (2+5,1-2) {};
        \node [solid node](11) at (1+5,3-2) {};
        \node [solid node](12) at (2+5,2-2) {};
        \node [solid node](21) at (2+5,3-2) {};
        \path (11) edge (21);
\end{tikzpicture}
\caption*{$x=z$}
\end{subfigure}
\begin{subfigure}[b]{.36\textwidth}
\centering
\begin{tikzpicture}[scale=.5]
	\fill[yellow] (0,0)--(2,0)--(3.5,-2)--(1.5,-2)--cycle;
	\draw (0,0)--(1.5,-2)--(3.5,-2)--(3.5,2.3)--(2,2)--cycle;
	\draw (0,0)--(2,0)--(2,2);
	\draw (2,0)--(3.5,-2);
	\draw[dashed] (0,0)--(1.5,0.3)--(3.5,2.3);
	\draw[dashed] (1.5,0.3)--(1.5,-2);
	
	\tikzset{
        solid node/.style={circle,draw,inner sep=1.2,fill=black},
        hollow node/.style={circle,draw,inner sep=1.2}
        }
        \node [solid node](01) at (0+5,3-2) {};
        \node [solid node](02) at (1+5,2-2) {};
        \node [solid node](03) at (2+5,1-2) {};
        \node [solid node](11) at (1+5,3-2) {};
        \node [solid node](12) at (2+5,2-2) {};
        \node [solid node](21) at (2+5,3-2) {};
        \path (21) edge (12);
\end{tikzpicture}
\caption*{$z=y$}
\end{subfigure}
\end{center}
\caption{Facets of $\GZ(\lambda)$ for $n=3$ with $\lambda_1 > \lambda_2 > \lambda_3$.}
\label{fig: facets and dots for n=3}
\end{figure}
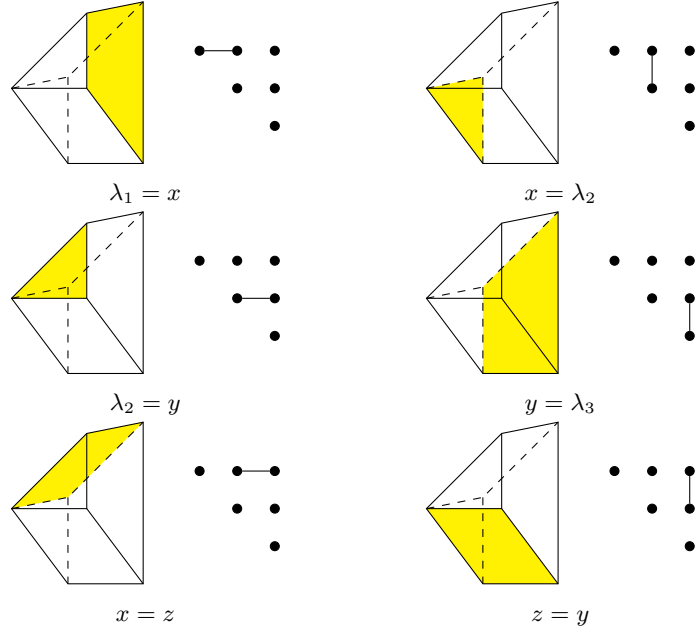

\end{example}

With the above notation in place, we can now understand the RHS of~\eqref{eq: AT formula} 
in terms of certain faces of the Gelfand-Zetlin polytope. The discussion below is a summary of results of Kiritchenko-Smirnov-Timorin \cite{KST12}; 
we refer the reader to \cite[Section~3.3, 4.3]{KST12} for more details.  A \textbf{Kogan face} is a face of $\GZ(\lambda)$ obtained via equations of the form $x_{i,j} = x_{i,j+1}$. In the language of face diagrams, these are the east-west line segments. A {\bf dual Kogan face} is a face of the Gelfand--Zetlin polytope given by equations of the type $x_{i,j}=x_{i+1,j}$. These are the north-south line segments in the face diagram. 
To each Kogan face $F$ we associate to it a permutation $w(F)$ as follows. To each 
line segment connecting $(i,j)$ and $(i, j+1)$ in the face diagram of $F$, we assign the simple transposition $s_{j} = (j, j+1)$. To construct $w(F)$, we now successively compose those transpositions corresponding to those line segments by reading the transpositions as follows: we read along the diagonals, going (diagonally) down along each diagonal, and starting from the outermost (shortest) diagonal and ending at the main (longest) diagonal. The resulting permutation is denoted $w(F)$. See Figure~\ref{fig: wF example} for an example. We say that a Kogan face is \textbf{reduced} if the decomposition for $w(F)$ obtained by the procedure above is reduced.

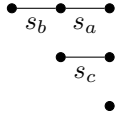
\begin{figure}[h]
\begin{center}
\begin{subfigure}[b]{.24\textwidth}
\centering
\begin{tikzpicture}[scale=.65]
\tikzset{
        solid node/.style={circle,draw,inner sep=1.2,fill=black},
        hollow node/.style={circle,draw,inner sep=1.2}
        }
        \node [solid node](01) at (0,3) {};
        \node [solid node](02) at (1,2) {};
        \node [solid node](03) at (2,1) {};
        \node [solid node](11) at (1,3) {};
        \node [solid node](12) at (2,2) {};
        \node [solid node](21) at (2,3) {};
        \path (11) edge node[below]{$s_b$} (01);
        \path (11) edge node[below]{$s_a$} (21);
        \path (02) edge node[below]{$s_c$} (12);
\end{tikzpicture}
\end{subfigure}
\end{center}
\caption{In the figure above, the corresponding permutation $w(F)$ is $w=s_as_bs_c$. We start by reading along the ``outermost'' (shortest) diagonal, reading (diagonally) downwards, and then proceed to the next diagonal on its left, again reading (diagonally) downwards.} 
\label{fig: wF example} 
\end{figure}

There is a similar procedure which we apply to each dual Kogan face $F^*$. To each line segment connecting $(i,j)$ and $(i+1,j)$ in the face diagram of $F^*$, we associate the simple transposition $s_{n-i}$. Now we 
successively compose those transpositions which appear in the face diagram of $F^*$ by reading them along the diagonals, going (diagonally) up along each diagonal, and starting from the outermost (shortest) diagonal and ending at the main (longest) diagonal. The resulting permutation is denoted $w(F^*)$. 
We say that a dual Kogan face $F^*$ is {\bf reduced} if the decomposition for $w(F^*)$ obtained in this way is reduced.

\begin{example}\label{example: kogan and dual kogan} 

Let $n=3$. We list all the reduced Kogan faces and reduced dual Kogan faces in Figures~\ref{fig: reduced Kogan} and~\ref{fig: reduced dual Kogan}. 

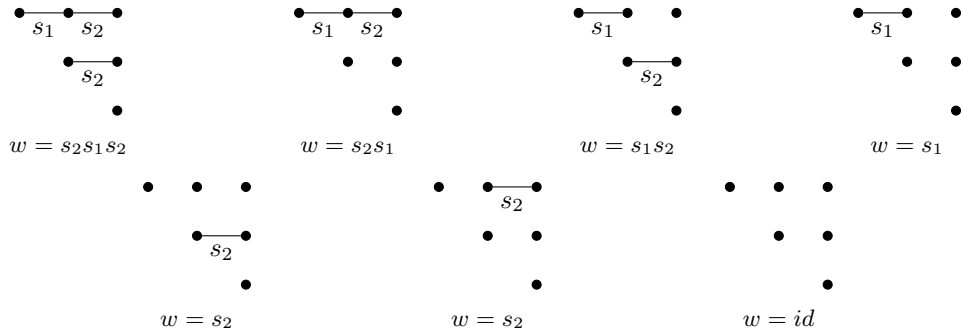
\begin{figure}[h]
\begin{center}
\begin{subfigure}[b]{.24\textwidth}
\centering
\begin{tikzpicture}[scale=.65]
\tikzset{
        solid node/.style={circle,draw,inner sep=1.2,fill=black},
        hollow node/.style={circle,draw,inner sep=1.2}
        }
        \node [solid node](01) at (0,3) {};
        \node [solid node](02) at (1,2) {};
        \node [solid node](03) at (2,1) {};
        \node [solid node](11) at (1,3) {};
        \node [solid node](12) at (2,2) {};
        \node [solid node](21) at (2,3) {};
        \path (11) edge node[below]{$s_1$} (01);
        \path (11) edge node[below]{$s_2$} (21);
        \path (02) edge node[below]{$s_2$} (12);
\end{tikzpicture}
\caption*{$w=s_2s_1s_2$}
\end{subfigure}
\begin{subfigure}[b]{.24\textwidth}
\centering
\begin{tikzpicture}[scale=.65]
\tikzset{
        solid node/.style={circle,draw,inner sep=1.2,fill=black},
        hollow node/.style={circle,draw,inner sep=1.2}
        }
        \node [solid node](01) at (0,3) {};
        \node [solid node](02) at (1,2) {};
        \node [solid node](03) at (2,1) {};
        \node [solid node](11) at (1,3) {};
        \node [solid node](12) at (2,2) {};
        \node [solid node](21) at (2,3) {};
        \path (11) edge node[below]{$s_1$} (01);
        \path (11) edge node[below]{$s_2$} (21);
\end{tikzpicture}
\caption*{$w=s_2s_1$}
\end{subfigure}
\begin{subfigure}[b]{.24\textwidth}
\centering
\begin{tikzpicture}[scale=.65]
\tikzset{
        solid node/.style={circle,draw,inner sep=1.2,fill=black},
        hollow node/.style={circle,draw,inner sep=1.2}
        }
        \node [solid node](01) at (0,3) {};
        \node [solid node](02) at (1,2) {};
        \node [solid node](03) at (2,1) {};
        \node [solid node](11) at (1,3) {};
        \node [solid node](12) at (2,2) {};
        \node [solid node](21) at (2,3) {};
        \path (11) edge node[below]{$s_1$} (01);
        \path (02) edge node[below]{$s_2$} (12);
\end{tikzpicture}
\caption*{$w=s_1s_2$}
\end{subfigure}
\begin{subfigure}[b]{.24\textwidth}
\centering
\begin{tikzpicture}[scale=.65]
\tikzset{
        solid node/.style={circle,draw,inner sep=1.2,fill=black},
        hollow node/.style={circle,draw,inner sep=1.2}
        }
        \node [solid node](01) at (0,3) {};
        \node [solid node](02) at (1,2) {};
        \node [solid node](03) at (2,1) {};
        \node [solid node](11) at (1,3) {};
        \node [solid node](12) at (2,2) {};
        \node [solid node](21) at (2,3) {};
        \path (11) edge node[below]{$s_1$} (01);\end{tikzpicture}
\caption*{$w=s_1$}
\end{subfigure}

\vspace{.25cm}

\begin{subfigure}[b]{.25\textwidth}
\centering
\begin{tikzpicture}[scale=.65]
\tikzset{
        solid node/.style={circle,draw,inner sep=1.2,fill=black},
        hollow node/.style={circle,draw,inner sep=1.2}
        }
        \node [solid node](01) at (0,3) {};
        \node [solid node](02) at (1,2) {};
        \node [solid node](03) at (2,1) {};
        \node [solid node](11) at (1,3) {};
        \node [solid node](12) at (2,2) {};
        \node [solid node](21) at (2,3) {};
        \path (02) edge node[below]{$s_2$} (12);
\end{tikzpicture}
\caption*{$w=s_2$}
\end{subfigure}
\begin{subfigure}[b]{.25\textwidth}
\centering
\begin{tikzpicture}[scale=.65]
\tikzset{
        solid node/.style={circle,draw,inner sep=1.2,fill=black},
        hollow node/.style={circle,draw,inner sep=1.2}
        }
        \node [solid node](01) at (0,3) {};
        \node [solid node](02) at (1,2) {};
        \node [solid node](03) at (2,1) {};
        \node [solid node](11) at (1,3) {};
        \node [solid node](12) at (2,2) {};
        \node [solid node](21) at (2,3) {};
        \path (11) edge node[below]{$s_2$} (21);
\end{tikzpicture}
\caption*{$w=s_2$}
\end{subfigure}
\begin{subfigure}[b]{.25\textwidth}
\centering
\begin{tikzpicture}[scale=.65]
\tikzset{
        solid node/.style={circle,draw,inner sep=1.2,fill=black},
        hollow node/.style={circle,draw,inner sep=1.2}
        }
        \node [solid node](01) at (0,3) {};
        \node [solid node](02) at (1,2) {};
        \node [solid node](03) at (2,1) {};
        \node [solid node](11) at (1,3) {};
        \node [solid node](12) at (2,2) {};
        \node [solid node](21) at (2,3) {};
\end{tikzpicture}
\caption*{$w=id$}
\end{subfigure}
\end{center}
\caption{Reduced Kogan faces when $n=3$}
\label{fig: reduced Kogan} 
\end{figure}

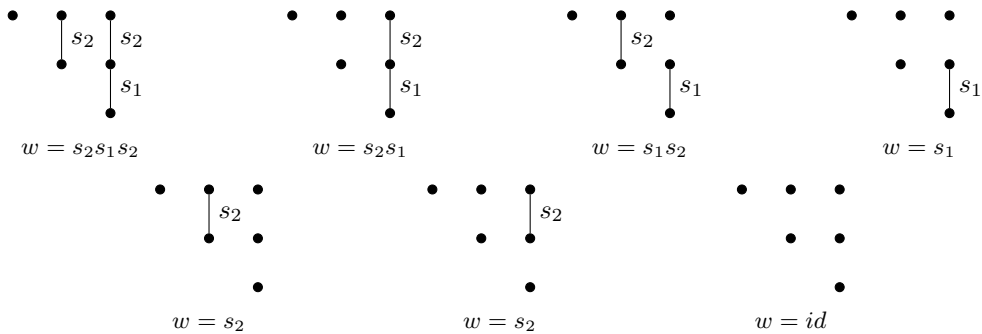
\begin{figure}[h]
\begin{center}
\begin{subfigure}[b]{.24\textwidth}
\centering
\begin{tikzpicture}[scale=.65]
\tikzset{
        solid node/.style={circle,draw,inner sep=1.2,fill=black},
        hollow node/.style={circle,draw,inner sep=1.2}
        }
        \node [solid node](01) at (0,3) {};
        \node [solid node](02) at (1,2) {};
        \node [solid node](03) at (2,1) {};
        \node [solid node](11) at (1,3) {};
        \node [solid node](12) at (2,2) {};
        \node [solid node](21) at (2,3) {};
        \path (03) edge node[right]{$s_1$} (12);
        \path (12) edge node[right]{$s_2$} (21);
        \path (02) edge node[right]{$s_2$} (11);
\end{tikzpicture}
\caption*{$w=s_2s_1s_2$}
\end{subfigure}
\begin{subfigure}[b]{.24\textwidth}
\centering
\begin{tikzpicture}[scale=.65]
\tikzset{
        solid node/.style={circle,draw,inner sep=1.2,fill=black},
        hollow node/.style={circle,draw,inner sep=1.2}
        }
        \node [solid node](01) at (0,3) {};
        \node [solid node](02) at (1,2) {};
        \node [solid node](03) at (2,1) {};
        \node [solid node](11) at (1,3) {};
        \node [solid node](12) at (2,2) {};
        \node [solid node](21) at (2,3) {};
        \path (03) edge node[right]{$s_1$} (12);
        \path (12) edge node[right]{$s_2$} (21);
\end{tikzpicture}
\caption*{$w=s_2s_1$}
\end{subfigure}
\begin{subfigure}[b]{.24\textwidth}
\centering
\begin{tikzpicture}[scale=.65]
\tikzset{
        solid node/.style={circle,draw,inner sep=1.2,fill=black},
        hollow node/.style={circle,draw,inner sep=1.2}
        }
        \node [solid node](01) at (0,3) {};
        \node [solid node](02) at (1,2) {};
        \node [solid node](03) at (2,1) {};
        \node [solid node](11) at (1,3) {};
        \node [solid node](12) at (2,2) {};
        \node [solid node](21) at (2,3) {};
        \path (03) edge node[right]{$s_1$} (12);
        \path (02) edge node[right]{$s_2$} (11);
\end{tikzpicture}
\caption*{$w=s_1s_2$}
\end{subfigure}
\begin{subfigure}[b]{.24\textwidth}
\centering
\begin{tikzpicture}[scale=.65]
\tikzset{
        solid node/.style={circle,draw,inner sep=1.2,fill=black},
        hollow node/.style={circle,draw,inner sep=1.2}
        }
        \node [solid node](01) at (0,3) {};
        \node [solid node](02) at (1,2) {};
        \node [solid node](03) at (2,1) {};
        \node [solid node](11) at (1,3) {};
        \node [solid node](12) at (2,2) {};
        \node [solid node](21) at (2,3) {};
        \path (03) edge node[right]{$s_1$} (12);\end{tikzpicture}
\caption*{$w=s_1$}
\end{subfigure}

\vspace{.25cm}

\begin{subfigure}[b]{.25\textwidth}
\centering
\begin{tikzpicture}[scale=.65]
\tikzset{
        solid node/.style={circle,draw,inner sep=1.2,fill=black},
        hollow node/.style={circle,draw,inner sep=1.2}
        }
        \node [solid node](01) at (0,3) {};
        \node [solid node](02) at (1,2) {};
        \node [solid node](03) at (2,1) {};
        \node [solid node](11) at (1,3) {};
        \node [solid node](12) at (2,2) {};
        \node [solid node](21) at (2,3) {};
        \path (02) edge node[right]{$s_2$} (11);
\end{tikzpicture}
\caption*{$w=s_2$}
\end{subfigure}
\begin{subfigure}[b]{.25\textwidth}
\centering
\begin{tikzpicture}[scale=.65]
\tikzset{
        solid node/.style={circle,draw,inner sep=1.2,fill=black},
        hollow node/.style={circle,draw,inner sep=1.2}
        }
        \node [solid node](01) at (0,3) {};
        \node [solid node](02) at (1,2) {};
        \node [solid node](03) at (2,1) {};
        \node [solid node](11) at (1,3) {};
        \node [solid node](12) at (2,2) {};
        \node [solid node](21) at (2,3) {};
        \path (12) edge node[right]{$s_2$} (21);
\end{tikzpicture}
\caption*{$w=s_2$}
\end{subfigure}
\begin{subfigure}[b]{.25\textwidth}
\centering
\begin{tikzpicture}[scale=.65]
\tikzset{
        solid node/.style={circle,draw,inner sep=1.2,fill=black},
        hollow node/.style={circle,draw,inner sep=1.2}
        }
        \node [solid node](01) at (0,3) {};
        \node [solid node](02) at (1,2) {};
        \node [solid node](03) at (2,1) {};
        \node [solid node](11) at (1,3) {};
        \node [solid node](12) at (2,2) {};
        \node [solid node](21) at (2,3) {};
\end{tikzpicture}
\caption*{$w=id$}
\end{subfigure}
\end{center}
\caption{Reduced dual Kogan faces when $n=3$}
\label{fig: reduced dual Kogan} 
\end{figure}

\end{example}

In what follows, we will use a special case of \cite[Corollary~4.6]{KST12}, stated below. Since we do not require the full details of this theorem, we do not recall precise definitions of all the notation that is used. Briefly, the expressions $[F \cap F^*]$ appearing below are elements of a polytope ring corresponding to a certain resolution of the Gelfand-Zetlin polytope, and the map $\pi$ identifies the sum shown below with an element in the polytope ring corresponding to $\GZ(\lambda)$, which is known to be isomorphic to the Chow (cohomology) ring of $\Fl(\C^n)$. For details, see \cite{KST12}.

\begin{theorem}\label{theorem: KST} (\cite[Corollary~4.6]{KST12}) 
Let the notation be as above. Then 
\begin{equation*}
[X^u][X^{w_0vw_0}]= \pi \, \bigg( \sum_{\substack{F:\text{ reduced Kogan face} \\ F^*:\text{ reduced dual Kogan face} \\ w(F)=u,\ w(F^*)=v}} [F\cap F^*]\bigg). 
\end{equation*}
\end{theorem}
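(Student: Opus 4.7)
The plan is to derive the formula as the product, in a polytope ring isomorphic to $H^*(\Fl(\C^n))$, of two sums that separately represent $[X^u]$ and $[X^{w_0 v w_0}]$ as sums over Kogan and dual Kogan faces respectively. The setup of \cite{KST12} introduces a (simplicial) resolution of $\GZ(\lambda)$ together with a polytope ring $R$ and a surjective homomorphism $\pi \colon R \to H^*(\Fl(\C^n))$ which, in particular, sends the class of each face of $\GZ(\lambda)$ to a well-defined cohomology class.

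First I would invoke the key geometric input from \cite{KST12} (building on work of Kogan): for each $u \in \mathfrak{S}_n$ one has the expansion $[X^u] = \pi\bigl(\sum_F [F]\bigr)$, where $F$ runs over all reduced Kogan faces with $w(F) = u$. The underlying idea is that, under a toric degeneration of $\Fl(\C^n)$ to the toric variety associated with $\GZ(\lambda)$, a Schubert cycle degenerates to a union of torus-invariant subvarieties indexed exactly by such reduced Kogan faces, each appearing with multiplicity one. A parallel argument applied to opposite Schubert varieties yields $[X^{w_0 v w_0}] = \pi\bigl(\sum_{F^*} [F^*]\bigr)$, summed over reduced dual Kogan faces $F^*$ with $w(F^*) = v$; the conjugation by $w_0$ accounts for the fact that dual Kogan faces naturally encode the opposite Schubert decomposition, which is exchanged with the standard one under $w_0$.

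Multiplying the two expansions and using that $\pi$ is a ring homomorphism reduces the theorem to the polytope-ring identity $[F] \cdot [F^*] = [F \cap F^*]$ for a Kogan face $F$ and a dual Kogan face $F^*$. Since Kogan faces are cut out by equalities of the form $x_{i,j} = x_{i,j+1}$ (east--west) while dual Kogan faces are cut out by $x_{i,j} = x_{i+1,j}$ (north--south), the two collections of defining equations share no common variable pairs and are thus combinatorially independent. As a consequence, $F$ and $F^*$ meet transversely in the expected codimension, which is precisely the situation in which face-class multiplication in the polytope ring coincides with intersection. The product then expands as the asserted double sum of classes $[F \cap F^*]$ indexed by pairs $(F, F^*)$ of reduced Kogan/dual Kogan faces with $w(F) = u$ and $w(F^*) = v$.

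The main obstacle is the first step, namely justifying the expansion $[X^u] = \pi\bigl(\sum_F [F]\bigr)$ over reduced Kogan faces with the correct multiplicity one. A priori, a non-reduced Kogan face could contribute with a multiplicity counting the number of reductions of the word $s_{a_1} \cdots s_{a_k}$ associated to its face diagram; ruling this out requires a careful analysis of the toric degeneration of $\Fl(\C^n)$ to the Gelfand--Zetlin toric variety, in the spirit of Kogan's mitosis and the pipe-dream realization of Schubert polynomials. Once this input is in place, the multiplicative step reduces to the transversality observation above and is essentially formal.
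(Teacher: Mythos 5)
The paper offers no proof of this statement: it is quoted verbatim from \cite[Corollary~4.6]{KST12}, and the authors explicitly decline even to recall the precise definitions of $\pi$ and of the classes $[F\cap F^*]$. So your proposal is really competing with the proof in the cited source. Your outline does track that proof's strategy (expand $[X^u]$ and $[X^{w_0vw_0}]$ separately over reduced Kogan, resp.\ reduced dual Kogan, faces, then multiply in the polytope ring of a resolution of $\GZ(\lambda)$), and you correctly single out the multiplicity-one expansion of a single Schubert class over reduced Kogan faces as the substantial geometric input.

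However, the multiplicative step is not ``essentially formal,'' and your justification of it contains a genuine error. You claim $[F]\cdot[F^*]=[F\cap F^*]$ because the east--west equations cutting out $F$ and the north--south equations cutting out $F^*$ ``share no common variable pairs and are thus combinatorially independent,'' so the faces ``meet transversely in the expected codimension.'' Distinctness of the defining equations does not imply their linear independence: around any unit square of the Gelfand--Zetlin pattern one has the relation $(x_{i,j}-x_{i,j+1})-(x_{i+1,j}-x_{i+1,j+1})-(x_{i,j}-x_{i+1,j})+(x_{i,j+1}-x_{i+1,j+1})=0$, so a Kogan face containing both horizontal edges of such a square and a dual Kogan face containing both vertical edges do \emph{not} meet in the expected codimension. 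Concretely, for $n=4$ take $F=\{x_{1,3}=x_{1,4},\ x_{2,3}=x_{2,4}\}$ and $F^*=\{x_{1,3}=x_{2,3},\ x_{1,4}=x_{2,4}\}$: their intersection is the nonempty $3$-dimensional face $\{x_{1,3}=x_{1,4}=x_{2,3}=x_{2,4}\}$ of the $6$-dimensional $\GZ(\lambda)$, of codimension $3$ rather than $2+2=4$. In this example both words are $s_3s_3$, so the pair is excluded by reducedness --- but your argument never uses reducedness at this step, so it proves too much and hence proves nothing. Establishing that \emph{reduced} pairs $(F,F^*)$ always intersect transversally (or that the exceptional intersections are empty and contribute zero) is a genuine combinatorial lemma in \cite{KST12}, on the same footing as the degeneration input you flagged. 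A secondary point: since $\GZ(\lambda)$ is not a simple polytope, the identity ``product of transversally meeting faces equals the class of the intersection'' only holds in the ring of the resolution and must then be pushed forward along $\pi$; this is precisely why $\pi$ appears in the statement, and it deserves more than the passing mention you give it.
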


Putting Theorems~\ref{theorem:AT} and~\ref{theorem: KST} together we immediately obtain the following explicit expression of the Poincar\'e dual $[\Hess(S,h)]$ of a regular semisimple Hessenberg variety as an expression involving a sum of certain faces of the Gelfand-Zetlin polytope.

\begin{corollary}\label{cor: Hess AT KST} 
Following the notation above, we have

\begin{equation} \label{eq:volHess(S,h)}
[\Hess(S,h)] = \pi \, \bigg( \sum_{\substack{u,v \in \mathfrak{S}_n \\ v^{-1}u=\wh \\ \ell(u)+\ell(v)=\ell(\wh)}} \sum_{\substack{F:\text{ reduced Kogan face} \\ F^*:\text{ reduced dual Kogan face} \\ w(F)=u,\ w(F^*)=v}}  [F\cap F^*] \bigg). 
\end{equation} 
\end{corollary}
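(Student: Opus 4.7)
The plan is to directly combine Theorem~\ref{theorem:AT} with Theorem~\ref{theorem: KST}; the corollary is essentially a matter of substitution, so the proof is very short.

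First I would apply Theorem~\ref{theorem:AT} to expand
\[
[\Hess(S,h)] = \sum_{\substack{u,v \in \mathfrak{S}_n \\ v^{-1}u=\wh \\ \ell(u)+\ell(v)=\ell(\wh)}} [X^u][X^{w_0 v w_0}]
\]
as a sum of products of (opposite) Schubert classes, indexed by pairs $(u,v) \in \mathfrak{S}_n \times \mathfrak{S}_n$ with $v^{-1}u = w_h$ and $\ell(u) + \ell(v) = \ell(w_h)$. Next, for each such pair $(u,v)$, I would invoke Theorem~\ref{theorem: KST} to rewrite the individual product $[X^u][X^{w_0 v w_0}]$ as the image under $\pi$ of a sum of elements $[F \cap F^*]$ in the polytope ring of the relevant resolution, where the inner sum runs over all pairs consisting of a reduced Kogan face $F$ and a reduced dual Kogan face $F^*$ satisfying $w(F) = u$ and $w(F^*) = v$.

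Finally, since $\pi$ is a linear (in fact ring) homomorphism between the polytope rings, one may interchange the outer summation with the application of $\pi$, collecting everything into a single $\pi$ of a double sum indexed simultaneously by the admissible pairs $(u,v)$ and the pairs of reduced (dual) Kogan faces $(F,F^*)$ with $w(F)=u$, $w(F^*)=v$. This is exactly the right-hand side of~\eqref{eq:volHess(S,h)}.

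There is no genuine obstacle here: both non-trivial ingredients are imported from the literature, and the only thing to verify is that the indexing conditions of the two theorems are compatible, which is automatic from the way the inner sum in Theorem~\ref{theorem: KST} is parametrized by $w(F)=u$ and $w(F^*)=v$. The real work of the paper begins with extracting positivity and volume-polynomial consequences from this formula in the subsequent sections.
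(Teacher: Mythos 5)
Your proposal is correct and matches the paper exactly: the paper derives the corollary by the same immediate substitution of Theorem~\ref{theorem: KST} into the Anderson--Tymoczko expansion of Theorem~\ref{theorem:AT}, using linearity of $\pi$ to collect the terms into a single double sum. Nothing further is required.
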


Here is a simple example. 

\begin{example}\label{example: cohomology n=4 h=2444}
Let $n=4$ and $h=(2,4,4,4)$. Then $w_h = 3124 = s_2 s_1$. From Corollary~\ref{cor: Hess AT KST} it follows that we are looking for faces of the form $F \cap F^\ast$ where $F$ is a reduced Kogan face, $F^*$ is a reduced dual Kogan face, $w(F^\ast)^{-1}w(F)=w_h$, and $\ell(w(F))+\ell(w(F^\ast))=2$. There are four faces $F\cap F^\ast$ satisfying these conditions; see Figure~\ref{fig: n=4 h=2444}. The leftmost face in Figure~\ref{fig: n=4 h=2444} satisfies $w(F)=s_2s_1$ and $w(F^\ast)=id$. The middle two faces satisfy $w(F)=s_1$ and $w(F^\ast)=s_2$. The rightmost face satisfies $w(F)=id$ and $w(F^\ast)=s_1s_2$.
Thus $[\Hess(S,h)]$ is the sum of these four faces.

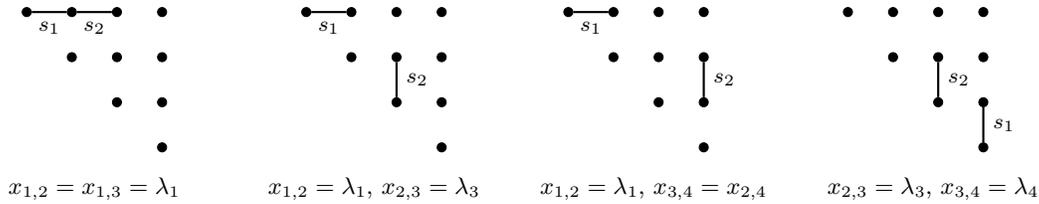
\begin{figure}[h]
\begin{center}
\begin{subfigure}[b]{.24\textwidth}
\begin{center}
    \begin{tikzpicture}[scale=0.6,font=\footnotesize]
    \tikzset{
    solid node/.style={circle,draw,inner sep=1.2,fill=black}
    }
        \node [solid node](01) at (0,3) {};
        \node [solid node](02) at (1,2) {};
        \node [solid node](03) at (2,1) {};
        \node [solid node](04) at (3,0) {};
        \node [solid node](11) at (1,3) {};
        \node [solid node](12) at (2,2) {};
        \node [solid node](13) at (3,1) {};
        \node [solid node](21) at (2,3) {};
        \node [solid node](22) at (3,2) {};
        \node [solid node](31) at (3,3) {};
        \path[thick] (01) edge node[below]{$s_1$} (11);
        \path[thick] (11) edge node[below]{$s_2$} (21);
     
    \end{tikzpicture}
    \end{center}
        \subcaption*{$x_{1,2}=x_{1,3}=\lambda_1$}
    \end{subfigure}
    \begin{subfigure}[b]{.24\textwidth}
        \begin{center}
    \begin{tikzpicture}[scale=0.6,font=\footnotesize]
    \tikzset{
    solid node/.style={circle,draw,inner sep=1.2,fill=black}
    }
        \node [solid node](01) at (0,3) {};
        \node [solid node](02) at (1,2) {};
        \node [solid node](03) at (2,1) {};
        \node [solid node](04) at (3,0) {};
        \node [solid node](11) at (1,3) {};
        \node [solid node](12) at (2,2) {};
        \node [solid node](13) at (3,1) {};
        \node [solid node](21) at (2,3) {};
        \node [solid node](22) at (3,2) {};
        \node [solid node](31) at (3,3) {};
        \path[thick] (01) edge node[below]{$s_1$} (11);
        \path[thick] (12) edge node[right]{$s_2$}(03);
        \end{tikzpicture}
        \end{center}
        \subcaption*{$x_{1,2}=\lambda_1$, $x_{2,3}=\lambda_3$}
    \end{subfigure}
    \begin{subfigure}[b]{.24\textwidth}
    \begin{center}
    \begin{tikzpicture}[scale=0.6,font=\footnotesize]
        \tikzset{
        solid node/.style={circle,draw,inner sep=1.2,fill=black},
        hollow node/.style={circle,draw,inner sep=1.2}
        }
        \node [solid node](01) at (0,3) {};
        \node [solid node](02) at (1,2) {};
        \node [solid node](03) at (2,1) {};
        \node [solid node](04) at (3,0) {};
        \node [solid node](11) at (1,3) {};
        \node [solid node](12) at (2,2) {};
        \node [solid node](13) at (3,1) {};
        \node [solid node](21) at (2,3) {};
        \node [solid node](22) at (3,2) {};
        \node [solid node](31) at (3,3) {};
        \path[thick] (11) edge node[below]{$s_1$} (01);
        \path[thick] (13) edge node[right]{$s_2$} (22);
    \end{tikzpicture}
\end{center}
\subcaption*{$x_{1,2}=\lambda_1$, $x_{3,4}=x_{2,4}$}
\end{subfigure}
    \begin{subfigure}[b]{.24\textwidth}
    \begin{center}
    \begin{tikzpicture}[scale=0.6,font=\footnotesize]
        \tikzset{
        solid node/.style={circle,draw,inner sep=1.2,fill=black},
        hollow node/.style={circle,draw,inner sep=1.2}
        }
        \node [solid node](01) at (0,3) {};
        \node [solid node](02) at (1,2) {};
        \node [solid node](03) at (2,1) {};
        \node [solid node](04) at (3,0) {};
        \node [solid node](11) at (1,3) {};
        \node [solid node](12) at (2,2) {};
        \node [solid node](13) at (3,1) {};
        \node [solid node](21) at (2,3) {};
        \node [solid node](22) at (3,2) {};
        \node [solid node](31) at (3,3) {};
        \path[thick] (12) edge node[right]{$s_2$} (03);
        \path[thick] (13) edge node[right]{$s_1$} (04);
    \end{tikzpicture}
\end{center}
\subcaption*{$x_{2,3}=\lambda_3$, $x_{3,4}=\lambda_4$}
\end{subfigure}
    \end{center}
    \caption{Faces corresponding to $[\Hess(S,h)]$ for $h=(2,4,4,4)$}\label{fig: n=4 h=2444}
\end{figure}

 \end{example}

\section{Combinatorial formulas for the volume of faces of $\GZ(\lambda)$}\label{sec: combinatorics}

The two results of this section are as follows; we will use these results in the next section on volume polynomials. First, we give a combinatorial formula for the volume of a face of the Gelfand-Zetlin polytope which is explicitly expressed as a polynomial in the $\alpha_i = \lambda_i - \lambda_{i+1}$, $i=1,\ldots,n-1$, and whose coefficients count certain combinatorial objects (Proposition~\ref{prop:face of GZ}). In particular, the coefficients are manifestly positive. Our approach is based on the work of Postnikov in \cite{Post} in which he gives a similar formula for the volume of the Gelfand-Zetlin polytope in terms of \textbf{standard shifted Young tableau}.  Thus, our formula can be viewed as a generalization of Postnikov's work to the faces of the Gelfand-Zetlin polytope. Second, we generalize a result of Kiritchenko, Smirnov, and Timorin, which gives a linear relation between the volumes of four faces of $\GZ(\lambda)$ which are obtained by intersecting a larger face with four closely related hyperplanes.

We begin with the terminology required to state Postnikov's result. 
A {\bf standard shifted Young tableau} of triangular shape $(n,n - 1,\ldots,1)$ is a bijective map $T\colon \{(i, j) \mid 1\leq i\leq j \leq n\} \to \{1,\ldots,{n+1 \choose 2}\}$, which is increasing in the rows and the columns, i.e. $T((i, j)) < T((i, j + 1))$ and $T((i, j)) < T((i+1, j))$ whenever the entries are defined.
We say that the {\bf diagonal vector} of such a tableau $T$ is the vector obtained by reading the entries along the main diagonal, i.e. $\mathrm{diag}(T) = (d_1,\ldots,d_n) :=
(T(1,1),T(2,2),...,T(n,n))$. See Figure~\ref{figure: standard shifted Young}.

\begin{figure}[t]
\begin{center}
\begin{subfigure}[b]{.4\textwidth}
\centering
    \begin{tikzpicture}[scale=.53]
    
        \draw (0,0)--(0,-1)--(1,-1)--(1,-2)--(2,-2)--(2,-3)--(3,-3)--(3,-4)--(4,-4)--(4,0)--cycle;
        \foreach \x in {1,2,3}
        \draw (\x,0)--(\x,-\x)--(4,-\x);
        \draw (0.5,-0.5) node{\tiny{1}};
        \draw (1.5,-0.5) node{\tiny{2}};
        \draw (2.5,-0.5) node{\tiny{4}};
        \draw (3.5,-0.5) node{\tiny{7}};
        \draw (1.5,-1.5) node{\tiny{3}};
        \draw (2.5,-1.5) node{\tiny{5}};
        \draw (3.5,-1.5) node{\tiny{8}};
        \draw (2.5,-2.5) node{\tiny{6}};
        \draw (3.5,-2.5) node{\tiny{9}};
        \draw (3.5,-3.5) node{\tiny{10}};
    \end{tikzpicture}
\end{subfigure}
\end{center} 
\caption{Example of a standard shifted Young tableau. The diagonal vector is $(1,3,6,10)$.}
\label{figure: standard shifted Young} 
\end{figure}

In~\cite{Post}, Postnikov showed that $\GZ(\lambda)$ can be divided into products of simplices each of which corresponds to a standard shifted Young tableau of triangular shape $(n,n-1,\ldots,2,1)$. 

\begin{theorem} (\cite[Theorem 15.1]{Post}\label{theorem:Post})
    The volume of the Gelfand--Zetlin polytope $\GZ(\lambda)$ is
    \begin{equation*}
        \vol(\GZ(\lambda))=\sum_{p_1,\ldots,p_{n-1}\geq 0} N(p_1,\ldots,p_{n-1})\frac{\alpha_1^{p_1}}{p_1!}\frac{\alpha_2^{p_2}}{p_2!} \cdots \frac{\alpha_{n-1}^{p_{n-1}}}{p_{n-1}!},
    \end{equation*} where $\alpha_j=\lambda_j-\lambda_{j+1}$ for $j=1,\ldots,n-1$, and the coefficient $N(p_1,\ldots,p_{n-1})$ is equal to the number of standard shifted Young tableaux $T$ of triangular shape $(n, n-1, \ldots, 2,1)$ with diagonal vector equal to $\mathrm{diag}(T)=(1,p_1+2,p_1+p_2+3,\ldots,p_1+\cdots+p_{n-1}+n)$.
\end{theorem}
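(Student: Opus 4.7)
The plan is to produce a polyhedral decomposition of $\GZ(\lambda)$ into pieces $U_T$ indexed by standard shifted Young tableaux $T$ of triangular shape $(n, n-1, \ldots, 1)$, compute $\vol(U_T)$ explicitly as a monomial in the $\alpha_j$, and then sum over $T$ with a fixed diagonal vector to obtain the coefficient $N(p_1, \ldots, p_{n-1})$.

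To construct the decomposition, first observe that the interlacing inequalities defining $\GZ(\lambda)$ place the entries $\{x_{i,j} : 1 \le i \le j \le n\}$ (with $x_{j,j} = \lambda_j$) in a natural partial order, in which each entry dominates its right and lower neighbors. At a generic point of $\GZ(\lambda)$ all $n(n+1)/2$ values are distinct, so sorting them from largest to smallest produces a linear extension of this poset. Under the standard bijection, such a linear extension is recorded by a standard shifted Young tableau $T$, with $T(i,j)$ equal to the rank of the entry at position $(i,j)$ when sorted from largest to smallest; in particular the diagonal entries satisfy $T(j,j) = d_j$ where the chain $\lambda_1 > \cdots > \lambda_n$ forces $d_1 < \cdots < d_n$. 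I would then let $U_T \subset \GZ(\lambda)$ be the relatively open set of points whose induced ranking realizes $T$; these sets partition $\GZ(\lambda)$ up to a set of measure zero.

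To compute $\vol(U_T)$, I would argue that the interlacing inequality between any two off-diagonal entries that $T$ places into different open intervals $(\lambda_{k+1}, \lambda_k)$ is automatically strict, since such entries are separated by an intervening $\lambda$-value; consequently, the only binding inequalities among off-diagonal entries are those between entries assigned by $T$ to the same interval. Within each interval $(\lambda_{j+1}, \lambda_j)$, the tableau $T$ linearly orders the $p_j := d_{j+1} - d_j - 1$ off-diagonal entries assigned there, cutting out an ordered $p_j$-simplex of side length $\alpha_j$ and volume $\alpha_j^{p_j}/p_j!$. Taking the product over $j$ yields
\begin{equation*}
\vol(U_T) = \prod_{j=1}^{n-1} \frac{\alpha_j^{p_j}}{p_j!},
\end{equation*}
which depends on $T$ only through its diagonal vector $\mathrm{diag}(T) = (1, p_1+2, p_1+p_2+3, \ldots, p_1+\cdots+p_{n-1}+n)$. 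Grouping the tableaux with a common diagonal vector and summing over all $T$ then produces the stated formula, with $N(p_1, \ldots, p_{n-1})$ counting the shifted tableaux with the prescribed diagonal.

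The main obstacle I anticipate is justifying rigorously the product-of-simplices description of $U_T$: one must confirm that after $T$ is fixed, every GZ interlacing inequality is either strictly satisfied by the $\lambda$-separation of intervals or else already captured by the within-interval total order dictated by $T$, so that no hidden cross-interval constraint survives to distort the simplex shape. This should follow cleanly from the fact that $T$ is itself a linear extension of the entire interlacing poset, but it requires careful bookkeeping of the active inequalities that I would not want to gloss over.
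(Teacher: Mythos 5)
Your proposal is correct and is essentially the argument the paper uses (and attributes to Postnikov): it subdivides $\GZ(\lambda)$ by the hyperplanes $x_{i,j}=x_{k,\ell}$ into regions indexed by total orderings of the entries, identifies each region with the product of ordered simplices $\alpha_1\Delta^{p_1}\times\cdots\times\alpha_{n-1}\Delta^{p_{n-1}}$, and sums the resulting monomials over tableaux with a fixed diagonal vector; this is exactly the proof the paper gives for its generalization, Proposition~\ref{prop:face of GZ}. The ``hidden cross-interval constraint'' worry you raise is resolved just as you suspect: since $T$ is a linear extension of the interlacing poset, every GZ inequality is implied by the single chain $y_1>\cdots>y_{n(n+1)/2}$ with $y_{d_j}=\lambda_j$, which factors interval by interval.
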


Note that each coordinate $x_{i,j}$ corresponds to the box on the $i$-th row and the $j$-th column in the shifted Young diagram of the triangular shape $(n,n-1,\ldots,1)$.
It is useful to see an example. 

\begin{example}
Let $n=3$. 
The Gelfand-Zetlin polytope $\GZ(\lambda_1,\lambda_2,\lambda_3)$ can be divided into two regions $$\{\lambda_1\geq x_{1,2}\geq \lambda_2\geq x_{1,3}\geq x_{2,3}\geq \lambda_3\}\text{ and }\{\lambda_1\geq x_{1,2}\geq x_{1,3}\geq \lambda_2\geq x_{2,3}\geq \lambda_3\}$$ which are the products of two simplices $(\lambda_1 - \lambda_2)\Delta^1\times (\lambda_2-\lambda_3)\Delta^2$ and  $(\lambda_1-\lambda_2)\Delta^2\times (\lambda_2-\lambda_3)\Delta^1$, respectively. Hence we have $$\vol(\GZ(\lambda_1,\lambda_2,\lambda_3)) = \frac{(\lambda_1-\lambda_2)^2(\lambda_2 -\lambda_3)}{2}+\frac{(\lambda_1-\lambda_2)(\lambda_2-\lambda_3)^2}{2}.$$ Furthermore, each of the regions corresponds to a standard shifted Young tableau of the triangular shape $(3,2,1)$. See Figure~\ref{fig:ex of SSYT}.  

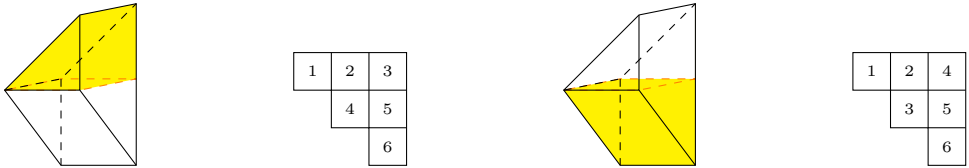
\begin{figure}[h]
\begin{center}
\begin{subfigure}[b]{.24\textwidth}
\centering
\begin{tikzpicture}[scale=.5]
	\fill[yellow] (0,0)--(2,0)--(3.5,0.3)--(3.5,2.3)--(2,2)--cycle;
	\draw[orange,dashed] (0,0)--(2,0)--(3.5,0.3)--(1.5,0.3)--cycle;
	\draw (0,0)--(1.5,-2)--(3.5,-2)--(3.5,2.3)--(2,2)--cycle;
	\draw (0,0)--(2,0)--(2,2);
	\draw (2,0)--(3.5,-2);
	\draw[dashed] (0,0)--(1.5,0.3)--(3.5,2.3);
	\draw[dashed] (1.5,0.3)--(1.5,-2);
\end{tikzpicture}
\end{subfigure}
\begin{subfigure}[b]{.24\textwidth}
\centering
    \begin{tikzpicture}[scale=.5]
        \draw (0,0)--(0,-1)--(1,-1)--(1,-2)--(2,-2)--(2,-3)--(3,-3)--(3,0)--cycle;
        \foreach \x in {1,2}
        \draw (\x,0)--(\x,-\x)--(3,-\x);
        \draw (0.5,-0.5) node{\tiny{1}};
        \draw (1.5,-0.5) node{\tiny{2}};
        \draw (2.5,-0.5) node{\tiny{3}};
        \draw (1.5,-1.5) node{\tiny{4}};
        \draw (2.5,-1.5) node{\tiny{5}};
        \draw (2.5,-2.5) node{\tiny{6}};
    \end{tikzpicture}
\end{subfigure}
\begin{subfigure}[b]{.24\textwidth}
\centering
\begin{tikzpicture}[scale=.5]
	\fill[yellow] (0,0)--(1.5,0.3)--(3.5,0.3)--(3.5,2.3)--(3.5,-2)--(1.5,-2)--cycle;
	\draw[orange,dashed] (0,0)--(2,0)--(3.5,0.3)--(1.5,0.3)--cycle;
	\draw (0,0)--(1.5,-2)--(3.5,-2)--(3.5,2.3)--(2,2)--cycle;
	\draw (0,0)--(2,0)--(2,2);
	\draw (2,0)--(3.5,-2);
	\draw[dashed] (0,0)--(1.5,0.3)--(3.5,2.3);
	\draw[dashed] (1.5,0.3)--(1.5,-2);
\end{tikzpicture}
\end{subfigure}
\begin{subfigure}[b]{.24\textwidth}
\centering
    \begin{tikzpicture}[scale=.5]
        \draw (0,0)--(0,-1)--(1,-1)--(1,-2)--(2,-2)--(2,-3)--(3,-3)--(3,0)--cycle;
        \foreach \x in {1,2}
        \draw (\x,0)--(\x,-\x)--(3,-\x);
        \draw (0.5,-0.5) node{\tiny{1}};
        \draw (1.5,-0.5) node{\tiny{2}};
        \draw (2.5,-0.5) node{\tiny{4}};
        \draw (1.5,-1.5) node{\tiny{3}};
        \draw (2.5,-1.5) node{\tiny{5}};
        \draw (2.5,-2.5) node{\tiny{6}};
    \end{tikzpicture}
\end{subfigure}
\caption{Standard shifted Young tableaux of the triangular shape~$(3,2,1)$ and their corresponding regions}\label{fig:ex of SSYT}
\end{center}
\end{figure}

\end{example}

It turns out that Theorem~\ref{theorem:Post} can be generalized: every face of $\GZ(\lambda)$ can be divided into products of simplices and we can relate them to shifted tableaux, as follows. 
For a face $F$ of $\GZ(\lambda)$, we let 
$$\mathcal{H}(F)=\{(i,j,k,\ell)\mid x_{i,j}=x_{k,\ell}\text{ in }F \text{ and }(k,\ell) \text{ is }(i+1,j) \text{ or }(i,j+1) \}.$$ 
Recall that each box $(i,j)$ of a shifted Young diagram corresponds to a variable, namely $x_{i,j}$. We say a function $T: \{(i,j) \, \mid \, 1 \leq i \leq j \leq n\} \to \{ 1, 2, \ldots, n+\dim F \}$ is a \textbf{shifted Young tableau associated to $F$} if the assignment $T$ is weakly increasing along both rows and columns, and $T((i,j)) = T((i, j+1))$ if and only if $(i,j, i, j+1) \in \mathcal{H}(F)$ (i.e. $x_{i,j} = x_{i, j+1}$ in $F$), and $T((i,j)) = T((i+1, j))$ if and only if $(i,j,i+1, j) \in \mathcal{H}(F)$ (i.e. $x_{i,j} = x_{i+1,j}$ in $F$). 
See Figure~\ref{fig:shifted tableaux} for an example.

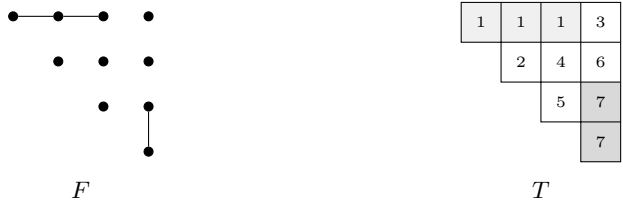
\begin{figure}[h]
\begin{center}
   \begin{subfigure}[b]{.4\textwidth}
\begin{center}
    \begin{tikzpicture}[scale=0.6,font=\footnotesize]
        \tikzset{
        solid node/.style={circle,draw,inner sep=1.2,fill=black},
        hollow node/.style={circle,draw,inner sep=1.2}
        }
        \node [solid node](01) at (0,3) {};
        \node [solid node](02) at (1,2) {};
        \node [solid node](03) at (2,1) {};
        \node [solid node](04) at (3,0) {};
        \node [solid node](11) at (1,3) {};
        \node [solid node](12) at (2,2) {};
        \node [solid node](13) at (3,1) {};
        \node [solid node](21) at (2,3) {};
        \node [solid node](22) at (3,2) {};
        \node [solid node](31) at (3,3) {};
        \path (11) edge (01);
        \path (13) edge (04);
        \path (21) edge (11);
    \end{tikzpicture}
\end{center}
\caption*{$F$}
\end{subfigure}
\begin{subfigure}[b]{.4\textwidth}

\begin{center} 
    \begin{tikzpicture}[scale=.53]
        \fill[gray!20] (0,0)--(0,-1)--(3,-1)--(3,0)--cycle;
        \fill[gray!50] (3,-2)--(3,-4)--(4,-4)--(4,-2)--cycle;
        \draw (0,0)--(0,-1)--(1,-1)--(1,-2)--(2,-2)--(2,-3)--(3,-3)--(3,-4)--(4,-4)--(4,0)--cycle;
        \foreach \x in {1,2,3}
        \draw (\x,0)--(\x,-\x)--(4,-\x);
        \draw (0.5,-0.5) node{\tiny{1}};
        \draw (1.5,-0.5) node{\tiny{1}};
        \draw (2.5,-0.5) node{\tiny{1}};
        \draw (3.5,-0.5) node{\tiny{3}};
        \draw (1.5,-1.5) node{\tiny{2}};
        \draw (2.5,-1.5) node{\tiny{4}};
        \draw (3.5,-1.5) node{\tiny{6}};
        \draw (2.5,-2.5) node{\tiny{5}};
        \draw (3.5,-2.5) node{\tiny{7}};
        \draw (3.5,-3.5) node{\tiny{7}};
    \end{tikzpicture}
    \caption*{$T$} 
    \end{center} 
\end{subfigure}

\end{center}
\caption{Example of a shifted tableaux $T$ associated with a face $F$ of $\GZ(\lambda_1,\ldots,\lambda_4)$}\label{fig:shifted tableaux}
\end{figure}

Using ideas similar to those for \cite[Theorem~15.1]{Post} we can prove the following.

\begin{proposition} \label{prop:face of GZ}
The volume of a face $F$ of the Gelfand--Zetlin polytope $\GZ(\lambda)$ is given by
\[
\vol(F)=\sum_{p_1,\dots,p_{n-1}\ge 0}N_F(p_1,\dots,p_{n-1})\frac{\alpha_1^{p_1}}{p_1!}\cdots\frac{\alpha_{n-1}^{p_{n-1}}}{p_{n-1}!}
\]
where $N_F(p_1,\dots,p_{n-1})$ is the number of shifted tableaux $T$ associated to $F$ with the diagonal vector $\mathrm{diag}(T)=(1,p_1+2,p_1+p_2+3,\ldots,p_1+\cdots+p_{n-1}+n)$.
In particular, $\vol(F)$ is a polynomial with nonnegative coefficients in the variables $\alpha_1,\ldots,\alpha_{n-1}$.
\end{proposition}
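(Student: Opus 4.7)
The plan is to adapt the argument Postnikov uses to prove Theorem~\ref{theorem:Post}, replacing $\GZ(\lambda)$ by the face $F$ and replacing standard shifted Young tableaux by the (weakly increasing) shifted tableaux associated to $F$. I will subdivide $F$ into regions, each of which is isometric to a product of simplices of the form $\alpha_j\Delta^{p_j}$, and each of which is indexed by such a tableau $T$. Summing the resulting volumes across the subdivision then yields the stated formula.

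To set up the subdivision, first observe that on $F$ the forced equalities in $\mathcal{H}(F)$ partition the boxes $\{(i,j)\mid 1\le i\le j\le n\}$ into equivalence classes, which I will call the \emph{cells of $F$}. Every point $x\in F$ assigns a single value to each cell, and the GZ inequalities~\eqref{eq:GZ} descend to a partial order on the cells (in which the cell containing $\lambda_i$ must take the value $\lambda_i$). The total number of cells is $n+\dim F$, and a shifted tableau $T$ associated to $F$ is precisely a bijection between cells and $\{1,\ldots,n+\dim F\}$ that is an order-reversing linear extension of this partial order, sending the diagonal cell of $\lambda_j$ to the entry $d_j$ of $\mathrm{diag}(T)$. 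For each such $T$, define
\[
R_T=\bigl\{x\in F\mid x_{i,j}\ge x_{k,\ell}\text{ whenever }T((i,j))\le T((k,\ell))\bigr\}.
\]
The collection $\{R_T\}_T$ tiles $F$, overlapping only in sets of measure zero, because every point of $F$ admits at least one compatible linear extension.

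Next, I would compute $\vol(R_T)$ using $\mathrm{diag}(T)=(d_1,\ldots,d_n)$ with $d_j=p_1+\cdots+p_{j-1}+j$. For each $j$, the non-diagonal cells $c$ with $d_j<T(c)<d_{j+1}$ are exactly those whose coordinate must lie in the interval $[\lambda_{j+1},\lambda_j]$ of length $\alpha_j$; by definition of the diagonal vector there are exactly $p_j$ such cells, and the inequalities imposed on them inside $R_T$ realize them as the standard simplex $\{\lambda_j\ge y_1\ge y_2\ge\cdots\ge y_{p_j}\ge\lambda_{j+1}\}$. Since the diagonal cells are fixed and separate the non-diagonal cells into groups lying in disjoint intervals, these simplices decouple, and $R_T$ is isometric to $\prod_{j=1}^{n-1}\bigl(\alpha_j\Delta^{p_j}\bigr)$, giving
\[
\vol(R_T)=\prod_{j=1}^{n-1}\frac{\alpha_j^{p_j}}{p_j!}.
\]

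Finally, summing $\vol(R_T)$ over all shifted tableaux $T$ associated to $F$ and grouping by diagonal vector produces the proposition; positivity of the coefficients is automatic since $N_F(p_1,\ldots,p_{n-1})$ is a count of tableaux. The step I expect to require the most care is the decoupling claim in the previous paragraph, namely, that once $T$ is fixed, the defining inequalities of $R_T$ factor as a product over the intervals $[\lambda_{j+1},\lambda_j]$. This is where the ``ladder'' structure of~\eqref{eq:GZ} enters crucially: one must verify that no forced equality in $\mathcal{H}(F)$, nor any chain of GZ inequalities, can couple cells that $T$ assigns to different intervals, and this in turn follows from the fact that $T$ is a linear extension that respects the partition induced by the diagonal values $d_1,\ldots,d_n$.
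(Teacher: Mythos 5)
Your proposal is correct and follows essentially the same route as the paper: subdivide $F$ by the hyperplanes $x_{i,j}=x_{k,\ell}$ for $(i,j,k,\ell)\notin\mathcal{H}(F)$, identify the resulting regions with shifted tableaux associated to $F$, and recognize each region as a product of simplices $\prod_j \alpha_j\Delta^{p_j}$ determined by the diagonal vector. Your explicit attention to the decoupling of the intervals $[\lambda_{j+1},\lambda_j]$ is a point the paper treats implicitly via the isomorphism with the chain $Y_T$ in~\eqref{eq:face associated with T}, but the substance is the same.
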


\begin{proof}
    Let us subdivide $F$ into parts by the hyperplanes $x_{i,j}=x_{k,\ell},$ for all $(i,j,k,\ell)\not\in \mathcal{H}(F).$ A region of this subdivision of the face $F$ corresponds to a choice of a total ordering of the $x_{i,j}$ compatible with all inequalities. Such orderings are in one-to-one correspondence with {shifted tableaux associated with the face $F$} of $\GZ(\lambda).$ {That is, for a given region, the elements equal to $\lambda_1$ correspond to the boxes containing a~$1$, and they are the maximal elements in the region. The second maximal elements in the region correspond to the boxes containing a~$2$, and so on.} For a tableau $T$ with the diagonal vector $\mathrm{diag}(T) = (d_1,\ldots,d_n)$, the region of $F$ associated with~$T$ is isomorphic to
    \begin{equation}\label{eq:face associated with T}
        Y_T=\left\{(y_1 > \cdots > y_{n_F}) \,\middle|\, y_{d_i}=\lambda_i \text{ for }i=1,\ldots,n\right\},
    \end{equation} where $n_F=n+\dim F$. Note that each $x_{i,j}$ corresponds to $y_{T(i,j)}$, and $x_{i,j}\geq x_{k,\ell}$ if and only if $T(i,j)\leq T(k,\ell)$. Since $Y_T$ is isomorphic to the direct product of simplices $\alpha_1\Delta^{p_1}\times\cdots\times \alpha_{n-1}\Delta^{p_{n-1}},$ where $\alpha_i=\lambda_i-\lambda_{i+1}$ and $p_i=d_{i+1}-d_i-1$, the volume of $Y_T$ equals $\frac{\alpha_1^{p_i}}{p_1!}\cdots\frac{\alpha_{n-1}^{p_{n-1}}}{p_{n-1}!}.$
    Thus the volume $\vol(F)$ can be written as the sum of these expressions over shifted tableaux associated with the face $F$.
\end{proof}

\begin{example}
    Let $n=4$ and $F$ be the face of $\GZ(\lambda)$ defined by $x_{1,1}=x_{1,2}=x_{1,3}$ and $x_{3,4}=x_{4,4}$. Then $n_F=n+\dim F=7$ and there are seven shifted tableaux associated with~$F$ of the triangular shape $(4,3,2,1)$ as in Figure~\ref{fig:shifted rim hook F}.
    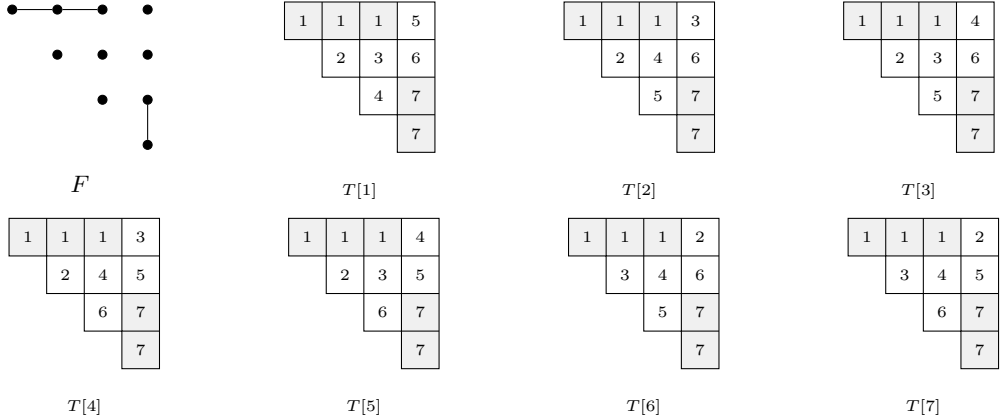
\begin{figure}[h]
       \begin{subfigure}[b]{.24\textwidth}
\begin{center}
    \begin{tikzpicture}[scale=0.6,font=\footnotesize]
        \tikzset{
        solid node/.style={circle,draw,inner sep=1.2,fill=black},
        hollow node/.style={circle,draw,inner sep=1.2}
        }
        \node [solid node](01) at (0,3) {};
        \node [solid node](02) at (1,2) {};
        \node [solid node](03) at (2,1) {};
        \node [solid node](04) at (3,0) {};
        \node [solid node](11) at (1,3) {};
        \node [solid node](12) at (2,2) {};
        \node [solid node](13) at (3,1) {};
        \node [solid node](21) at (2,3) {};
        \node [solid node](22) at (3,2) {};
        \node [solid node](31) at (3,3) {};
        \path (11) edge (01);
        \path (13) edge (04);
        \path (21) edge (11);
    \end{tikzpicture}
\end{center}
\caption*{$F$}
\end{subfigure}
       \begin{subfigure}[b]{.24\textwidth}
\begin{center}
    \begin{tikzpicture}[scale=.5]
        \fill[gray!20] (0,0)--(0,-1)--(3,-1)--(3,0)--cycle;
        \fill[gray!20] (3,-2)--(3,-4)--(4,-4)--(4,-2)--cycle;
        \draw (0,0)--(0,-1)--(1,-1)--(1,-2)--(2,-2)--(2,-3)--(3,-3)--(3,-4)--(4,-4)--(4,0)--cycle;
        \foreach \x in {1,2,3}
        \draw (\x,0)--(\x,-\x)--(4,-\x);
        \draw (0.5,-0.5) node{\tiny{1}};
        \draw (1.5,-0.5) node{\tiny{1}};
        \draw (2.5,-0.5) node{\tiny{1}};
        \draw (3.5,-0.5) node{\tiny{5}};
        \draw (1.5,-1.5) node{\tiny{2}};
        \draw (2.5,-1.5) node{\tiny{3}};
        \draw (3.5,-1.5) node{\tiny{6}};
        \draw (2.5,-2.5) node{\tiny{4}};
        \draw (3.5,-2.5) node{\tiny{7}};
        \draw (3.5,-3.5) node{\tiny{7}};
    \end{tikzpicture}
\end{center}
\caption*{\tiny$T[1]$}
\end{subfigure}
       \begin{subfigure}[b]{.24\textwidth}
\begin{center}
    \begin{tikzpicture}[scale=.5]
        \fill[gray!20] (0,0)--(0,-1)--(3,-1)--(3,0)--cycle;
        \fill[gray!20] (3,-2)--(3,-4)--(4,-4)--(4,-2)--cycle;
        \draw (0,0)--(0,-1)--(1,-1)--(1,-2)--(2,-2)--(2,-3)--(3,-3)--(3,-4)--(4,-4)--(4,0)--cycle;
        \foreach \x in {1,2,3}
        \draw (\x,0)--(\x,-\x)--(4,-\x);
        \draw (0.5,-0.5) node{\tiny{1}};
        \draw (1.5,-0.5) node{\tiny{1}};
        \draw (2.5,-0.5) node{\tiny{1}};
        \draw (3.5,-0.5) node{\tiny{3}};
        \draw (1.5,-1.5) node{\tiny{2}};
        \draw (2.5,-1.5) node{\tiny{4}};
        \draw (3.5,-1.5) node{\tiny{6}};
        \draw (2.5,-2.5) node{\tiny{5}};
        \draw (3.5,-2.5) node{\tiny{7}};
        \draw (3.5,-3.5) node{\tiny{7}};
    \end{tikzpicture}
\end{center}
\caption*{\tiny$T[2]$}
\end{subfigure}
       \begin{subfigure}[b]{.24\textwidth}
\begin{center}
    \begin{tikzpicture}[scale=.5]
        \fill[gray!20] (0,0)--(0,-1)--(3,-1)--(3,0)--cycle;
        \fill[gray!20] (3,-2)--(3,-4)--(4,-4)--(4,-2)--cycle;
        \draw (0,0)--(0,-1)--(1,-1)--(1,-2)--(2,-2)--(2,-3)--(3,-3)--(3,-4)--(4,-4)--(4,0)--cycle;
        \foreach \x in {1,2,3}
        \draw (\x,0)--(\x,-\x)--(4,-\x);
        \draw (0.5,-0.5) node{\tiny{1}};
        \draw (1.5,-0.5) node{\tiny{1}};
        \draw (2.5,-0.5) node{\tiny{1}};
        \draw (3.5,-0.5) node{\tiny{4}};
        \draw (1.5,-1.5) node{\tiny{2}};
        \draw (2.5,-1.5) node{\tiny{3}};
        \draw (3.5,-1.5) node{\tiny{6}};
        \draw (2.5,-2.5) node{\tiny{5}};
        \draw (3.5,-2.5) node{\tiny{7}};
        \draw (3.5,-3.5) node{\tiny{7}};
    \end{tikzpicture}
\end{center}
\caption*{\tiny$T[3]$}
\end{subfigure}
\vspace{.2cm}

       \begin{subfigure}[b]{.24\textwidth}
\begin{center}
    \begin{tikzpicture}[scale=.5]
        \fill[gray!20] (0,0)--(0,-1)--(3,-1)--(3,0)--cycle;
        \fill[gray!20] (3,-2)--(3,-4)--(4,-4)--(4,-2)--cycle;
        \draw (0,0)--(0,-1)--(1,-1)--(1,-2)--(2,-2)--(2,-3)--(3,-3)--(3,-4)--(4,-4)--(4,0)--cycle;
        \foreach \x in {1,2,3}
        \draw (\x,0)--(\x,-\x)--(4,-\x);
        \draw (0.5,-0.5) node{\tiny{1}};
        \draw (1.5,-0.5) node{\tiny{1}};
        \draw (2.5,-0.5) node{\tiny{1}};
        \draw (3.5,-0.5) node{\tiny{3}};
        \draw (1.5,-1.5) node{\tiny{2}};
        \draw (2.5,-1.5) node{\tiny{4}};
        \draw (3.5,-1.5) node{\tiny{5}};
        \draw (2.5,-2.5) node{\tiny{6}};
        \draw (3.5,-2.5) node{\tiny{7}};
        \draw (3.5,-3.5) node{\tiny{7}};
    \end{tikzpicture}
\end{center}
\caption*{\tiny$T[4]$}
\end{subfigure}
       \begin{subfigure}[b]{.24\textwidth}
\begin{center}
    \begin{tikzpicture}[scale=.5]
        \fill[gray!20] (0,0)--(0,-1)--(3,-1)--(3,0)--cycle;
        \fill[gray!20] (3,-2)--(3,-4)--(4,-4)--(4,-2)--cycle;
        \draw (0,0)--(0,-1)--(1,-1)--(1,-2)--(2,-2)--(2,-3)--(3,-3)--(3,-4)--(4,-4)--(4,0)--cycle;
        \foreach \x in {1,2,3}
        \draw (\x,0)--(\x,-\x)--(4,-\x);
        \draw (0.5,-0.5) node{\tiny{1}};
        \draw (1.5,-0.5) node{\tiny{1}};
        \draw (2.5,-0.5) node{\tiny{1}};
        \draw (3.5,-0.5) node{\tiny{4}};
        \draw (1.5,-1.5) node{\tiny{2}};
        \draw (2.5,-1.5) node{\tiny{3}};
        \draw (3.5,-1.5) node{\tiny{5}};
        \draw (2.5,-2.5) node{\tiny{6}};
        \draw (3.5,-2.5) node{\tiny{7}};
        \draw (3.5,-3.5) node{\tiny{7}};
    \end{tikzpicture}
\end{center}
\caption*{\tiny$T[5]$}
\end{subfigure}
       \begin{subfigure}[b]{.24\textwidth}
\begin{center}
    \begin{tikzpicture}[scale=.5]
        \fill[gray!20] (0,0)--(0,-1)--(3,-1)--(3,0)--cycle;
        \fill[gray!20] (3,-2)--(3,-4)--(4,-4)--(4,-2)--cycle;
        \draw (0,0)--(0,-1)--(1,-1)--(1,-2)--(2,-2)--(2,-3)--(3,-3)--(3,-4)--(4,-4)--(4,0)--cycle;
        \foreach \x in {1,2,3}
        \draw (\x,0)--(\x,-\x)--(4,-\x);
        \draw (0.5,-0.5) node{\tiny{1}};
        \draw (1.5,-0.5) node{\tiny{1}};
        \draw (2.5,-0.5) node{\tiny{1}};
        \draw (3.5,-0.5) node{\tiny{2}};
        \draw (1.5,-1.5) node{\tiny{3}};
        \draw (2.5,-1.5) node{\tiny{4}};
        \draw (3.5,-1.5) node{\tiny{6}};
        \draw (2.5,-2.5) node{\tiny{5}};
        \draw (3.5,-2.5) node{\tiny{7}};
        \draw (3.5,-3.5) node{\tiny{7}};
    \end{tikzpicture}
\end{center}
\caption*{\tiny$T[6]$}
\end{subfigure}
       \begin{subfigure}[b]{.24\textwidth}
\begin{center}
    \begin{tikzpicture}[scale=.5]
        \fill[gray!20] (0,0)--(0,-1)--(3,-1)--(3,0)--cycle;
        \fill[gray!20] (3,-2)--(3,-4)--(4,-4)--(4,-2)--cycle;
        \draw (0,0)--(0,-1)--(1,-1)--(1,-2)--(2,-2)--(2,-3)--(3,-3)--(3,-4)--(4,-4)--(4,0)--cycle;
        \foreach \x in {1,2,3}
        \draw (\x,0)--(\x,-\x)--(4,-\x);
        \draw (0.5,-0.5) node{\tiny{1}};
        \draw (1.5,-0.5) node{\tiny{1}};
        \draw (2.5,-0.5) node{\tiny{1}};
        \draw (3.5,-0.5) node{\tiny{2}};
        \draw (1.5,-1.5) node{\tiny{3}};
        \draw (2.5,-1.5) node{\tiny{4}};
        \draw (3.5,-1.5) node{\tiny{5}};
        \draw (2.5,-2.5) node{\tiny{6}};
        \draw (3.5,-2.5) node{\tiny{7}};
        \draw (3.5,-3.5) node{\tiny{7}};
    \end{tikzpicture}
\end{center}
\caption*{\tiny$T[7]$}
\end{subfigure}
\caption{Shifted tableaux associated with $F$ of triangular shape $(4,3,2,1)$}\label{fig:shifted rim hook F}
    \end{figure}
    By Proposition~\ref{prop:face of GZ}, we can compute the volume of the region associated with $T[i]$ for $i=1,\ldots,7$ as follows.
    \begin{equation*}
    \begin{split}
        &\vol(Y_{T[1]})=\frac{\alpha_2^1\alpha_3^2}{2!},\,\,\,\vol(Y_{T[2]})=\vol(Y_{T[3]})=\frac{\alpha_2^2\alpha_3}{2!}\\
        & \vol(Y_{T[4]})=\vol(Y_{T[5]})=\frac{\alpha_2^3}{3!},\,\,\, \vol(Y_{T[6]})=\alpha_1\alpha_2\alpha_3, \,\,\,\vol(Y_{T[7]})=\frac{\alpha_1\alpha_2^2}{2!}
    \end{split}
    \end{equation*}
    Also note that $T[2]$ and $T[3]$ have the same diagonal entries, as do $T[4]$ and $T[5]$; hence they contribute to the same monomial term in the formula. Putting this together, we obtain that the volume of the face $F$ is equal to $$\vol(F)=\frac{\alpha_2^1\alpha_3^2}{2!}+2\frac{\alpha_2^2\alpha_3}{2!}+2\frac{\alpha_2^3}{3!}+\alpha_1\alpha_2\alpha_3+\frac{\alpha_1\alpha_2^2}{2!}.$$
\end{example}

We now state and prove our second combinatorial result. We need some preliminaries. Let $F$ be a face of $\GZ(\lambda)$, specified by equations tabulated in the set $\mathcal{H}(F)$ as above. We say that a variable $x_{i,j}$ is \textbf{isolated} in $F$ if $x_{i,j}$ does not appear in any of the equations as given in $\mathcal{H}(F)$ which specify $F$ (or, equivalently, the dot at location $(i,j)$ is not contained in any edge of the face diagram of $F$). Recall that our variables are $\{x_{i,j}\}$ where $1 \leq i < j \leq n$ and we denote $\lambda_i:=x_{i,i}$ for $1\leq i\leq n$. In what follows, if a pair $(i', j')$ of integers do not satisfy the condition $1 \leq i' \leq j' \leq n$ then we say that the indices are ``out of range''.

\begin{proposition}\label{prop:x-relation}
	Let $F$ be a face of $\GZ(\lambda)$. Let $x_{i,j}$ for $1 \leq i < j \leq n$ be isolated in $F$. 
 Assume also that each of the four variables $x_{i, j-1}$, $x_{i-1, j}$, $x_{i+1, j}$ and $x_{i, j+1}$ is either isolated or its indices are out of range. Then 

	\begin{equation*}\label{eq:x-relation for face 1}
    \begin{split}
    & \vol(F \cap \{x_{i,j} = x_{i,j-1} \}) + \vol(F \cap \{x_{i,j} = x_{i-1,j} \})\\
&\qquad=\vol(F \cap \{x_{i,j} = x_{i+1,j} \}) + \vol(F \cap \{x_{i,j} = x_{i,j+1} \}),
\end{split}
    \end{equation*}
    where a term should be set to $0$ if it involves indices that are out of range. 
 \end{proposition}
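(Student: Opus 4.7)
The plan is to integrate out the isolated variable $x := x_{i,j}$ via Fubini, thereby reducing the identity to a pointwise equality of indicator functions on the ``other'' coordinates. I will write $A := x_{i,j-1}$, $B := x_{i-1,j}$, $C := x_{i+1,j}$, $D := x_{i,j+1}$, with the standing convention that any of these symbols with out-of-range indices is simply omitted from all formulas that follow. Inspecting the diagram~\eqref{eq:GZ} together with the triangle rule~\eqref{eq:abc}, the defining inequalities of $\GZ(\lambda)$ that involve $x$ are exactly $\max(C,D) \le x \le \min(A,B)$. Because $x$, as well as each of $A, B, C, D$, is either isolated in $F$ or out of range, none of the equations cutting out $F$ involves $x$ or ties $A, B, C, D$ to $x$; hence $x$ is a genuinely free coordinate on $F$ bounded only by its four neighbors.

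Next I let $\pi$ denote the projection of $F$ onto all coordinates other than $x$; for each point of $\pi(F)$ the fiber is the interval $[\max(C,D), \min(A,B)]$. Under this projection, the face $F \cap \{x = A\}$ maps to the subset of $\pi(F)$ on which the specified value $A$ lies in that interval, namely $A \le B$, $A \ge C$, and $A \ge D$. Thus Fubini gives
\[
\vol\bigl(F \cap \{x = A\}\bigr) = \int_{\pi(F)} \mathbf{1}_{\{A \le B,\, A \ge C,\, A \ge D\}}\, dV',
\]
and analogous identities hold for the three other volumes with the obvious indicators. The proposition therefore reduces to the pointwise identity
\[
\mathbf{1}_{\{A \le B,\, A \ge C,\, A \ge D\}} + \mathbf{1}_{\{B \le A,\, B \ge C,\, B \ge D\}} = \mathbf{1}_{\{C \le A,\, C \le B,\, C \ge D\}} + \mathbf{1}_{\{D \le A,\, D \le B,\, D \ge C\}},
\]
valid on $\pi(F)$ up to a measure-zero set. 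This is the heart of the argument, but it is transparent: outside the measure-zero locus $\{A = B\} \cup \{C = D\}$, the left-hand side records which of $A$ or $B$ realizes $\min(A,B)$ together with the condition $\min(A,B) \ge \max(C,D)$, while the right-hand side records which of $C$ or $D$ realizes $\max(C,D)$ together with the same condition, so both sides equal $\mathbf{1}_{\{\max(C,D) \le \min(A,B)\}}$. Integrating this identity over $\pi(F)$ then yields the proposition.

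The only real bookkeeping obstacle I anticipate is a uniform treatment of the ``out of range'' conventions. If a neighbor is absent (for example $B$ when $i=1$, or $D$ when $j=n$), then the corresponding bound on $x$ disappears, the corresponding term in the claimed identity is set to $0$, and the indicator identity degenerates to an obvious analogue, for example $\mathbf{1}_{\{A \ge C,\, A \ge D\}} = \mathbf{1}_{\{C \le A,\, C \ge D\}} + \mathbf{1}_{\{D \le A,\, D \ge C\}}$ when $B$ is absent. Each such degenerate identity is verified by exactly the same $\min/\max$ argument. Beyond this case analysis there is no deeper difficulty: the isolation hypothesis is precisely what makes Fubini applicable and ensures that the exceptional loci $\{A = B\}$ and $\{C = D\}$ are of codimension at least one in $\pi(F)$ and therefore negligible for the Lebesgue integral.
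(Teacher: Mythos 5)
Your proof is correct, and it reaches the identity by a genuinely different route from the paper's. The paper first converts each of the four volumes into counts of shifted Young tableaux via Proposition~\ref{prop:face of GZ}, and then matches the two disjoint unions of tableau sets by a local case analysis on the four tableau entries surrounding position $(i,j)$; the four cases there, governed by $a\lessgtr b$ and $c\lessgtr d$, are (after the order reversal between tableau entries and variable values) precisely your comparison of which of $A,B$ attains $\min(A,B)$ and which of $C,D$ attains $\max(C,D)$. You instead integrate out the free coordinate $x_{i,j}$ directly: its fiber over $\pi(F)$ is $[\max(C,D),\min(A,B)]$, each of the four faces projects onto the locus where the corresponding endpoint of that interval is attained, and both sides of the identity collapse almost everywhere to $\mathbf{1}_{\{\max(C,D)\le \min(A,B)\}}$. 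Your version is more self-contained (it does not route through the simplicial decomposition) and makes the northwest/southeast symmetry transparent; the paper's discrete version yields the relation coefficient-by-coefficient in the monomials $\alpha_1^{p_1}\cdots\alpha_{n-1}^{p_{n-1}}$, which is the form in which it is combined with Proposition~\ref{prop:face of GZ} elsewhere. Two points you should spell out to make the argument airtight: (i) with the paper's lattice normalization, the volume of a codimension-one face such as $F\cap\{x_{i,j}=x_{i,j-1}\}$ really is the volume of its image under the projection forgetting $x_{i,j}$, because that projection restricted to the hyperplane $\{x_{i,j}=x_{i,j-1}\}$ is unimodular (the same identification underlies the proof of Proposition~\ref{prop:face of GZ}); and (ii) the exceptional loci $\{A=B\}$ and $\{C=D\}$ are proper hyperplane sections of $\pi(F)$ precisely because isolation puts $A$ and $B$ (respectively $C$ and $D$) in distinct equivalence classes of variables for $F$ --- the same place the paper uses isolation to guarantee that its four tableau sets are pairwise disjoint.
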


The above is a generalization of \cite[Proposition 3.2]{KST12}, which is the special case of Proposition~\ref{prop:x-relation} when $F=\GZ(\lambda)$ is the whole Gelfand-Zetlin polytope. 

Before proving the proposition we give the idea of what is happening. Let $F$ be a face of $\GZ(\lambda)$. Suppose, as stated in the hypothesis above, that we have a collection of isolated variables $x_{i,j}$ and its four ``neighbors'' $x_{i, j-1}$, $x_{i, j+1}$, $x_{i-1, j}$ and $x_{i+1,j}$. These five variables can be visualized as appearing in an arrangement (within the full arrangement of Gelfand-Zetlin variables indicated in~\eqref{eq:GZ}) as indicated in the figure below, where we have drawn both the variables as well as the piece of the face diagram corresponding to them. Note that there are no edges in the face diagram, since $x_{i,j}$ is assumed to be isolated; moreover, these five dots also do not touch any other edge of the face diagram. See Figure~\ref{fig: isolated}.

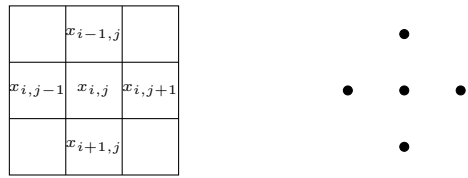
\begin{figure}[h]
	\begin{tikzpicture}[scale=.75]
	\draw (0,0)--(3,0)--(3,3)--(0,3)--cycle;
	\foreach \x in {1,2}
	{\draw (\x,0)--(\x,3);
	\draw(0,\x)--(3,\x);
	}
	\draw (1.5,1.5) node{\tiny${x_{i,j}}$};
	\draw (0.5,1.5) node{\tiny${x_{i, j-1}}$};
	\draw (1.5,2.5) node{\tiny${x_{i-1,j}}$};
	\draw (1.5,0.5) node{\tiny${x_{i+1,j}}$};
	\draw (2.5,1.5) node{\tiny${x_{i,j+1}}$};
	
	\tikzset{
        solid node/.style={circle,draw,inner sep=1.2,fill=black},
        hollow node/.style={circle,draw,inner sep=1.2}
        }
        \node [solid node](02) at (1+5,2-.5) {};
        \node [solid node](03) at (2+5,1-.5) {};
        \node [solid node](12) at (2+5,2-.5) {};
        \node [solid node](21) at (2+5,3-.5) {};
        \node [solid node](22) at (3+5,2-.5) {};

	\end{tikzpicture}
	\caption{An isolated variable $x_{i,j}$ and its (isolated) neighbors.}\label{fig: isolated} 
	\end{figure}

We may now consider four different faces of $\GZ(\lambda)$, obtained by intersecting $F$ with an additional hyperplane corresponding to the equations $x_{i,j} = x_{i,j-1}, x_{i,j} = x_{i-1,j}, x_{i,j} = x_{i, j+1},$ and $x_{i,j} = x_{i+1, j}$. We can think of these pictorially as well; for instance, the equation $x_{i,j}=x_{i,j-1}$ can be  represented by the picture in Figure~\ref{fig: piece}. 

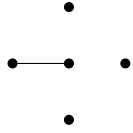
\begin{figure}[h]
	\begin{tikzpicture}[scale=.75]

	\tikzset{
        solid node/.style={circle,draw,inner sep=1.2,fill=black},
        hollow node/.style={circle,draw,inner sep=1.2}
        }
        \node [solid node](02) at (1+5,2-1) {};
        \node [solid node](03) at (2+5,1-1) {};
        \node [solid node](12) at (2+5,2-1) {};
        \node [solid node](21) at (2+5,3-1) {};
        \node [solid node](22) at (3+5,2-1) {}; 
        \path (02) edge (12);

	\end{tikzpicture}
	\caption{The equation $x_{i,j}=x_{i,j-1}$ represented as a piece of a face diagram.}\label{fig: piece} 
	\end{figure}

With these ideas in place, we can give the idea of the proposition. We consider the sum of the volumes of the faces obtained by intersecting $F$ with the two hyperplanes $\{x_{i,j} = x_{i,j-1}\}$ and $\{x_{i,j}=x_{i-1,j}\}$, corresponding to the two edges which lie `to the northwest' of the dot at $(i,j)$. We can do the same for the two hyperplanes corresponding to $\{x_{i,j} = x_{i, j+1}\}$ and $\{x_{i,j} = x_{i+1,j}\}$, which correspond to the two edges lying `to the southeast' of $(i,j)$. The point of our proposition is that these two quantities are the same. The argument also works, with minor adjustments, when some of the variables are `out of range', provided that
$x_{i,j}$ is not on the main diagonal in the triangle~\eqref{eq:GZ}.

\begin{proof}[Proof of Proposition~\ref{prop:x-relation}]
We first deal with the case when all variables are defined, i.e., none of the indices are out of range. From Proposition~\ref{prop:face of GZ} it follows that, in order to prove the claim, it would suffice to show that for any fixed $p_1, p_2, \ldots, p_{n-1} \geq 0$, we have that the two quantities
\begin{equation}\label{eq: N LHS} 
N_{F \cap \{x_{i,j} = x_{i,j-1}\}}(p_1, p_2, \ldots, p_{n-1}) + N_{F \cap \{x_{i,j} = x_{i-1,j}\}}(p_1, p_2, \ldots, p_{n-1}) \end{equation}
and
\begin{equation}\label{eq: N RHS} 
N_{F \cap \{x_{i,j} = x_{i+1,j}\}}(p_1, p_2, \ldots, p_{n-1}) + N_{F \cap \{x_{i,j} = x_{i,j+1}\}}(p_1, p_2, \ldots, p_{n-1}) 
\end{equation}  
are equal. 
For this purpose we define the set $\mathrm{ShYT}(F, p_1, p_2, \ldots, p_{n-1})$ to be the set of all shifted Young tableau $T$ associated to $F$ with $\mathrm{diag}(T) = (1, p_1+2, p_1+p_2+3,\ldots, p_1+\cdots+p_{n-1}+n)$. In fact, for the duration of this argument the parameters $p_1, p_2,\ldots,p_{n-1}$ are fixed, so for simplicity we suppress the $p_i$ from the notation and we denote $\mathrm{ShYT}(F) := \mathrm{ShYT}(F, p_1,p_2,\ldots,p_{n-1})$. So it follows immediately that the integer $N_{F'}(p_1, \ldots, p_{n-1})$ for a face $F'$ is by definition 
\[
N_{F'}(p_1, \ldots, p_{n-1}) = \lvert \mathrm{ShYT}(F') \rvert.
\]
Next, note that from the definition of shifted Young tableaux associated to faces, and because each of the four faces appearing in~\eqref{eq: N LHS} and~\eqref{eq: N RHS} have distinct defining equations, the four sets $\mathrm{ShYT}(F \cap \{x_{i,j} = x_{i,j-1}\})$ etc. are all (pairwise) disjoint. From this it follows that to prove that~\eqref{eq: N LHS} is equal to~\eqref{eq: N RHS}, it suffices to prove that there is a bijection 
between
\begin{equation}\label{eq: S LHS} 
\mathrm{ShYT}(F \cap \{x_{i,j} = x_{i, j-1}\}) \bigsqcup 
\mathrm{ShYT}(F \cap \{x_{i,j} = x_{i-1, j}\}) 
\end{equation}
and
\begin{equation}\label{eq: S RHS} 
\mathrm{ShYT}(F \cap \{x_{i,j} = x_{i, j+1}\}) \bigsqcup 
\mathrm{ShYT}(F \cap \{x_{i,j} = x_{i+1, j}\}).
\end{equation}

Let $T$ be a shifted Young tableau in~$\mathrm{ShYT}(F \cap \{x_{i,j} = x_{i, j-1}\})$, $\mathrm{ShYT}(F \cap \{x_{i,j} = x_{i-1, j}\})$, $\mathrm{ShYT}(F \cap \{x_{i,j} = x_{i, j+1}\})$, or $\mathrm{ShYT}(F \cap \{x_{i,j} = x_{i+1, j}\})$. We focus on the entries of $T$ in the five locations $(i,j)$, $(i-1,j)$, $(i,j-1)$, $(i+1,j)$ and $(i,j+1)$, which we represent as in the figure below. 

	\begin{figure}[h]
	\begin{tikzpicture}[scale=.5]
	\draw (0,0)--(3,0)--(3,3)--(0,3)--cycle;
	\foreach \x in {1,2}
	{\draw (\x,0)--(\x,3);
	\draw(0,\x)--(3,\x);
	}
	\draw (1.5,1.5) node{$x$};
	\draw (0.5,1.5) node{$a$};
	\draw (1.5,2.5) node{$b$};
	\draw (1.5,0.5) node{$c$};
	\draw (2.5,1.5) node{$d$};
	\end{tikzpicture}
	\end{figure}
From the definition of shifted tableaux, and from the assumption that the variables $x_{i, j-1}$, $x_{i-1,j}$, $x_{i+1,j}$ and $x_{i,j+1}$ are isolated, it follows that the integers $a$, $b$, $c$, $d$ are pairwise distinct and $x$ is equal to one of $a$, $b$, $c$, and $d$.
In particular we know $a<d$, $b<c$, $b<d$, and $a<c$. On the other hand, the relationships between $a$ and $b$, as well as $c$ and $d$, are not determined by the conditions on shifted tableau. Therefore there are four possible cases to consider: 
\begin{enumerate} 
\item[Case 1:]  $a>b$ and $c<d$. 
\item[Case 2:]  $a>b$ and $c>d$.
\item[Case 3:] $a<b$ and $c<d$.
\item[Case 4:]  $a<b$ and $c>d$.
\end{enumerate} 	
It is not hard to check the following:
\begin{itemize}
\item $T$ belongs to $\mathrm{ShYT}(F \cap \{x_{i,j} = x_{i, j-1}\})$ if and only if $x=a$ and $T$ satisfies cases 1 or 2;
\item $T$ belongs to $\mathrm{ShYT}(F \cap \{x_{i,j} = x_{i-1, j}\})$ if and only if $x=b$ and $T$ satisfies cases 3 or 4;
\item $T$ belongs to $\mathrm{ShYT}(F \cap \{x_{i,j} = x_{i+1, j}\})$ if and only if $x=c$ and $T$ satisfies cases 1 or 3; and
\item $T$ belongs to $\mathrm{ShYT}(F \cap \{x_{i,j} = x_{i, j+1}\})$ if and only if $x=d$ and $T$ satisfies cases 2 or 4.
\end{itemize}  See Figures~\ref{fig: PsiL} and~\ref{fig: PsiR}.

\begin{figure}[h]
	\centering
	\begin{subfigure}{.4\textwidth}
	\centering
	\begin{tikzpicture}[scale=.5]
	\draw (0,0)--(3,0)--(3,3)--(0,3)--cycle;
	\foreach \x in {1,2}
	{\draw (\x,0)--(\x,3);
	\draw(0,\x)--(3,\x);
	}
	\draw (1.5,1.5) node{$a$};
	\draw (0.5,1.5) node{$a$};
	\draw (1.5,2.5) node{$b$};
	\draw (1.5,0.5) node{$c$};
	\draw (2.5,1.5) node{$d$};
	\end{tikzpicture}
	\caption{$x=a$}
	\end{subfigure}
	\begin{subfigure}{.4\textwidth}
	\centering
	\begin{tikzpicture}[scale=.5]
	\draw (0,0)--(3,0)--(3,3)--(0,3)--cycle;
	\foreach \x in {1,2}
	{\draw (\x,0)--(\x,3);
	\draw(0,\x)--(3,\x);
	}
	\draw (1.5,1.5) node{$b$};
	\draw (0.5,1.5) node{$a$};
	\draw (1.5,2.5) node{$b$};
	\draw (1.5,0.5) node{$c$};
	\draw (2.5,1.5) node{$d$};
	\end{tikzpicture}
	\caption{$x=b$}
	\end{subfigure}
\caption{Pieces of shifted tableau associated to $F \cap \{x_{i,j}=x_{i,j-1}\}$ and $F \cap \{x_{i,j} = x_{i-1,j}\}$.}\label{fig: PsiL} 
\end{figure} 

\begin{figure}[h]
	\centering

	\begin{subfigure}{.4\textwidth}
	\centering
	\begin{tikzpicture}[scale=.5]
	\draw (0,0)--(3,0)--(3,3)--(0,3)--cycle;
	\foreach \x in {1,2}
	{\draw (\x,0)--(\x,3);
	\draw(0,\x)--(3,\x);
	}
	\draw (1.5,1.5) node{$c$};
	\draw (0.5,1.5) node{$a$};
	\draw (1.5,2.5) node{$b$};
	\draw (1.5,0.5) node{$c$};
	\draw (2.5,1.5) node{$d$};
	\end{tikzpicture}
	\caption{$x=c$}
	\end{subfigure}
	\begin{subfigure}{.4\textwidth}
	\centering
	\begin{tikzpicture}[scale=.5]
	\draw (0,0)--(3,0)--(3,3)--(0,3)--cycle;
	\foreach \x in {1,2}
	{\draw (\x,0)--(\x,3);
	\draw(0,\x)--(3,\x);
	}
	\draw (1.5,1.5) node{$d$};
	\draw (0.5,1.5) node{$a$};
	\draw (1.5,2.5) node{$b$};
	\draw (1.5,0.5) node{$c$};
	\draw (2.5,1.5) node{$d$};
	\end{tikzpicture}
	\caption{$x=d$}
	\end{subfigure}
	\caption{Pieces of shifted tableau associated to $F \cap \{x_{i,j}=x_{i+1,j}\}$ and $F \cap \{x_{i,j} = x_{i,j+1}\}$}\label{fig: PsiR}
	\end{figure}

Hence for each $T\in \mathrm{ShYT}(F \cap \{x_{i,j} = x_{i, j-1}\}) \bigsqcup 
\mathrm{ShYT}(F \cap \{x_{i,j} = x_{i-1, j}\})$, if $c<d$, then $T$ can be transformed into a shifted tableau in $\mathrm{ShYT}(F \cap \{x_{i,j} = x_{i+1, j}\})$ by changing $x=c$, and if $c>d$, then $T$ can be transformed into a shifted Young tableau in $\mathrm{ShYT}(F \cap \{x_{i,j} = x_{i, j+1}\})$ by changing $x=d$. Therefore, 
\begin{align*}
&|\mathrm{ShYT}(F \cap \{x_{i,j} = x_{i, j-1}\})|+|\mathrm{ShYT}(F \cap \{x_{i,j} = x_{i-1, j}\})|\\
& \leq |\mathrm{ShYT}(F \cap \{x_{i,j} = x_{i, j+1}\})| + \mathrm{ShYT}(F \cap \{x_{i,j} = x_{i+1, j}\})|.
\end{align*} Similarly, we can show that each $T\in \mathrm{ShYT}(F \cap \{x_{i,j} = x_{i, j+1}\}) \bigsqcup 
\mathrm{ShYT}(F \cap \{x_{i,j} = x_{i+1, j}\})$ can be transformed into a shifted Young tableau in $\mathrm{ShYT}(F \cap \{x_{i,j} = x_{i, j-1}\}) \bigsqcup 
\mathrm{ShYT}(F \cap \{x_{i,j} = x_{i-1, j}\})$ and hence we get 
\begin{align*}
&|\mathrm{ShYT}(F \cap \{x_{i,j} = x_{i, j-1}\})|+|\mathrm{ShYT}(F \cap \{x_{i,j} = x_{i-1, j}\})|\\
& \geq |\mathrm{ShYT}(F \cap \{x_{i,j} = x_{i, j+1}\})| + \mathrm{ShYT}(F \cap \{x_{i,j} = x_{i+1, j}\})|.
\end{align*}

Finally, it is not difficult to make minor adjustments to the considerations above to cover the cases in which either
 $b$ or $d$ is out of range. We leave details to the reader. \end{proof}

\begin{remark}
In fact, Proposition~\ref{prop:x-relation} can be further generalized. Suppose $F$ is a face of $\GZ(\lambda)$ and suppose that $x_{i,j}$ is isolated for $F$. Let $\mathcal{C}_a$, $\mathcal{C}_b$, $\mathcal{C}_c$ and $\mathcal{C}_d$ be the connected components containing the four vertices $(i,j-1)$, $(i-1,j)$, $(i+1,j)$ and $(i,j+1)$ in the face diagram of $F$. Note that since $x_{i-1,j-1}\geq x_{i-1,j}\geq x_{i,j}$ and $x_{i-1,j-1}\geq x_{i,j-1}\geq x_{i,j}$, if $x_{i',j'}=x_{i,j}$ for $i'<i$ and $j'<j$, then $x_{k,\ell}=x_{i,j}$ for $i'\leq k\leq i$ and $j'\leq \ell\leq j$. It is not difficult to adjust the proof of Proposition~\ref{prop:x-relation} to cover the case when each of 
the four vertices $(i-1,j-1)$, $(i-1,j+1)$, $(i+1,j-1)$ and $(i+1,j+1)$ is not contained in any of the sets $\mathcal{C}_a$, $\mathcal{C}_b$, $\mathcal{C}_c$ and $\mathcal{C}_d$. 
\end{remark}

\section{Volume polynomials of regular semisimple Hessenberg varieties}\label{sec: volume}

In this section we analyze the volume polynomials of regular semisimple Hessenberg varieties. We first introduce the volume polynomial of a subvariety of $\Fl(\C^n)$ and then consider the special case when the subvariety is the regular semisimple Hessenberg variety. In this case, some of our previous results \cite{AHMMS, ADGH} links the volume polynomial of $\Hess(S,h)$ to the volume polynomial of the Gelfand-Zetlin polytope $\GZ(\lambda)$. Some explicit computations in small-$n$ cases led us to believe that, firstly, the volume polynomial for $\Hess(S,h)$ should be an appropriate linear combination of the volumes of faces of $\GZ(\lambda)$. Secondly, we suspected that the volume polynomial, when expressed in terms of monomials in the $\alpha_i := \lambda_i - \lambda_{i+1}$, should have non-negative coefficients. The discussion in this section shows that both of the above are true. We take a somewhat expository approach in this section: although not strictly logically necessary, we use suggestive small-$n$ examples to illustrate our motivation for studying these phenomena. After the expository detour we answer in the affirmative the two questions posed above.

We begin with the definition of the volume polynomial associated to a subvariety of $\Fl(\C^n)$. We need some preliminaries. Let $E_i$ denote the $i$-th tautological vector bundle over $\Fl(\C^n)$; namely, $E_i$ is the sub-bundle of the trivial vector bundle $\Fl(\C^n)\times \C^n$ over $\Fl(\C^n)$ whose fiber over a point $V_{\bullet}$ is exactly $V_i$. Let $L_i := E_i/E_{i-1}$ be the quotient line bundle and let $L_i^*$ be the dual line bundle. 
We let $x_i$ denote the first Chern class of $L_i^*$, or equivalently, the negative of the first Chern class of $L_i$ over $\Fl(\C^n)$:
$$
x_i:=-c_1(E_i/E_{i-1}) = - c_1(L_i) = c_1(L_i^*) \ \ \ \textup{ for } i=1,2,\ldots,n.
$$
To each $\lambda=(\lambda_1,\lambda_2,\cdots,\lambda_n) \in \R^n$ and a cohomology class $\alpha\in H^{2(m-d)}(\Fl(\C^n))$, $m=\dim_\C\Fl(\C^n)$, we can assign the real number $\frac{1}{d!}\int_{\Fl(\C^n)} (\lambda_1x_1+\cdots+\lambda_nx_n)^{d}\alpha$, where $(\lambda_1x_1+\cdots+\lambda_nx_n)^{d}$ is an element of the cohomology ring $H^{2d}(\Fl(\C^n))$ and $\int_{\Fl(\C^n)}$ denotes the operator which takes the cap product with the fundamental class of $\Fl(\C^n)$.
Then the {\bf volume polynomial of a cohomology class $\alpha\in H^{2(m-d)}(\Fl(\C^n))$}, viewed as a function of the variables $\lambda_1, \lambda_2, \cdots, \lambda_n$, is defined by the formula
\begin{equation}\label{eq: def vol_lambda}
\vol_\lambda(\alpha):=\frac{1}{d!}\int_{\Fl(\C^n)} (\lambda_1x_1+\cdots+\lambda_nx_n)^{d}\alpha.
\end{equation} 
Note that $\vol_\lambda(\alpha)$ is a homogeneous polynomial of degree $d$ in the variables $\lambda_1,\lambda_2,\cdots,\lambda_n$. Then for an irreducible subvariety $Y$ of $\Fl(\C^n)$, we define $\vol_\lambda(Y) = \vol_\lambda([Y])$, where $[Y]$ is the Poincar\'{e} dual of the cycle of $Y$ in $H^\ast(\Fl(\C^n))$. We refer to $\vol_\lambda(Y)$ as the \textbf{volume polynomial of (the subvariety) $Y \subset \Fl(\C^n)$}. See e.g. \cite{AM} for more discussion related to volume polynomials.

We now recall the relation between the volume polynomials $\vol_\lambda(\Fl(\C^n))$ and $\vol_\lambda(\Hess(S,h))$ of the flag variety and the Hessenberg variety, and the volume of the Gelfand-Zetlin polytope $\GZ(\lambda)$. Indeed, when all of the $\lambda_i$ are integers, there exists a line bundle $L_\lambda$ over the flag variety $\Fl(\C^n)$ such that $c_1(L_\lambda)=\sum_{i=1}^n \lambda_ix_i$.
In fact, we can construct such an $L_\lambda$ explicitly by the formula $L_\lambda:=(L_1^*)^{\lambda_1} \otimes \cdots \otimes (L_n^*)^{\lambda_n}$.
Moreover, if $\lambda_1>\lambda_2>\cdots>\lambda_n$, then $L_\lambda$ is known to be very ample, and it follows readily from the formula~\eqref{eq: def vol_lambda} that, in this case, the quantity $\vol_\lambda(\Y)$ is the degree of $Y$ (in the standard sense of algebraic geometry) with respect to its embedding given by $L_\lambda$ multiplied by $\frac{1}{d!}$ where $d:=\dim_\C Y$. 
This degree, multiplied by $\frac{1}{d!}$, is in turn equal to the (normalized Euclidean) volume\footnote{Here we fix a (translation-invariant) volume form on $\R^d$. If an integer lattice $\Z^d \subset\R^d$ is fixed, we will always choose this volume form to take value~$1$ on the fundamental parallelepiped of $\Z^d$. See \cite{KST12}.}  of a Newton-Okounkov body associated with the line bundle $L_\lambda|_{\Y}$ restricted to $\Y$, by the standard theory of Newton-Okounkov bodies \cite[Section 1.3, cf. also Theorem 3.1 and Corollary 3.2]{KavKho}.  Finally, in the case of the full flag variety $Y=\Fl(\C^n)$, Kaveh has shown that the Gelfand-Zetlin polytope arises as a Newton--Okounkov body of $\Fl(\C^n)$ (for an appropriate choice of valuation) \cite{Kaveh}. 
Hence, in this case, the volume polynomial $\vol_\lambda(\Fl(\C^n))$ is linked through the theory of Newton-Okounkov bodies to the volume of Gelfand-Zetlin polytopes. 
From this discussion it follows that 
\begin{equation}\label{eq: vol Flag is vol GZ}
\vol_\lambda(\Fl(\C^n)) = \vol(\GZ(\lambda)).
\end{equation} 
On the other hand, from \cite[Section 15]{Post} we know that the volume of the Gelfand-Zetlin polytope is given by the formula 
\begin{equation}\label{eq: vol GZ} 
\mathrm{Vol}(\GZ(\lambda)) = \prod_{1 \leq i < j \leq n} \frac{\lambda_i - \lambda_j}{j-i} = 
\frac{1}{1! 2! \cdots (n-1)!} \prod_{1 \leq i < j \leq n} (\lambda_i - \lambda_j).
\end{equation}
Putting~\eqref{eq: vol Flag is vol GZ} and~\eqref{eq: vol GZ} together yields the following formula for the volume polynomial for the flag variety: 
\begin{equation}\label{eq: vol flag}
\vol_\lambda(\Fl(\C^n)) = \frac{1}{1! 2! \cdots (n-1)!} \prod_{1 \leq i < j \leq n} (\lambda_i - \lambda_j).
\end{equation}

As mentioned above, we are also interested in expressing the volume polynomial $\vol_\lambda(\Hess(S,h))$ in terms of the volume $\vol_\lambda(\Fl(\C^n))$ of the flag variety. For this purpose we introduce the following notation. For each $i$, $1 \leq i \leq n$, we view $\lambda_i$ as a variable and set 
\[
\partial_i := \frac{\partial}{\partial \lambda_i}
\]
to be the partial derivative operator with respect to $\lambda_i$. We can now state the following, which  is a consequence of \cite[Theorem 11.3]{AHMMS} (cf. also \cite[Corollary 6.1, Theorem 6.2]{ADGH}). 

\begin{theorem}\label{theorem: AHMMS} (\cite[Theorem 11.3]{AHMMS}, \cite[Corollary 6.1, Theorem 6.2]{ADGH}) 
The volume polynomials $\vol_\lambda(\Hess(S,h))$ and $\vol_\lambda(\Fl(\C^n))$ are related as follows: 
\begin{equation}\label{eq: vol Hess from vol Fl}
\vol_\lambda(\Hess(S,h)) = \left( \prod_{j=1}^{n-1} \prod_{i=h(j)+1}^n (\partial_j - \partial_i) \right) \vol_\lambda(\Fl(\C^n))
\end{equation}
where we take the convention that if $h(j)=n$ then $\prod_{i=h(j)+1}^n (\partial_j - \partial_i) = 1$. 
\end{theorem}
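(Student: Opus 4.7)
The plan is to reduce the formula to an explicit identification of the Poincar\'e dual $[\Hess(S,h)]$ as a polynomial in the Chern roots $x_1,\ldots,x_n$, together with the standard principle that differentiation in the $\lambda$-variables corresponds to multiplication by $x_j$ inside the volume integral~\eqref{eq: def vol_lambda}.

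The main step is to establish
\begin{equation}\label{eq:Hess-class-product}
[\Hess(S,h)] = \prod_{j=1}^{n-1}\prod_{i=h(j)+1}^n (x_j - x_i) \quad \text{in } H^*(\Fl(\C^n)).
\end{equation}
Geometrically, $\Hess(S,h)$ is cut out from $\Fl(\C^n)$ as an iterated zero locus: after imposing the first $j-1$ Hessenberg conditions, the $j$-th condition becomes a section of the bundle $\mathrm{Hom}(L_j, \C^n/E_{h(j)})$, because $SV_{j-1}\subset V_{h(j-1)}\subset V_{h(j)}$ forces the induced map $V_j\to \C^n/V_{h(j)}$ to factor through $L_j=V_j/V_{j-1}$. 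For regular semisimple $S$, each step cuts down dimension by exactly the rank of the corresponding bundle (by~\cite{DeMa-Pr-Sh}, $\Hess(S,h)$ has the expected codimension $\sum_{j=1}^{n-1}(n-h(j))$ and is smooth), so each intersection is transverse and $[\Hess(S,h)]$ equals the product of Euler classes of the bundles $\mathrm{Hom}(L_j, \C^n/E_{h(j)})$. By the splitting principle, with $c_1(L_j^*)=x_j$ and $\C^n/E_{h(j)}$ having Chern roots $\{-x_{h(j)+1},\ldots,-x_n\}$, the top Chern class of $\mathrm{Hom}(L_j,\C^n/E_{h(j)}) \cong L_j^*\otimes(\C^n/E_{h(j)})$ is $\prod_{i=h(j)+1}^n(x_j-x_i)$, and multiplying over $j$ yields~\eqref{eq:Hess-class-product}.

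The second ingredient is the identity
\begin{equation}\label{eq:partial-to-x}
P(\partial_1,\ldots,\partial_n)\,\vol_\lambda(\Fl(\C^n)) = \frac{1}{(m-c)!}\int_{\Fl(\C^n)} (\lambda_1 x_1 + \cdots + \lambda_n x_n)^{m-c}\, P(x_1,\ldots,x_n),
\end{equation}
valid for every homogeneous polynomial $P$ of degree $c\le m$, where $m=\binom{n}{2}$. By linearity it suffices to prove~\eqref{eq:partial-to-x} for a monomial $P=x_{j_1}\cdots x_{j_c}$, which follows by a straightforward induction on $c$: applying $\partial_{j_k}$ to $\frac{1}{k!}(\sum_i \lambda_i x_i)^k$ inside the integral~\eqref{eq: def vol_lambda} produces a factor of $x_{j_k}$ while dropping the exponent by one.

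To conclude, specialize $P$ in~\eqref{eq:partial-to-x} to the product $\prod_{j,i}(x_j-x_i)$ from~\eqref{eq:Hess-class-product}, whose degree equals the codimension $c=\sum_{j=1}^{n-1}(n-h(j))$ of $\Hess(S,h)$ in $\Fl(\C^n)$. The left-hand side of~\eqref{eq:partial-to-x} then becomes the right-hand side of the theorem, and its right-hand side, by~\eqref{eq:Hess-class-product} and the definition~\eqref{eq: def vol_lambda}, equals $\vol_\lambda(\Hess(S,h))$. The chief technical obstacle is the transversality claim in step one; should this geometric argument prove delicate, an alternative is to verify~\eqref{eq:Hess-class-product} algebraically by expanding the Anderson--Tymoczko formula~\eqref{eq: AT formula} in Schubert polynomials and checking directly that the sum of products of Schubert polynomials collapses into the stated product.
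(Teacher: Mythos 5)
Your proposal is correct, but note that the paper itself does not prove this statement: it is imported from \cite{AHMMS} and \cite{ADGH}, where the argument is phrased in terms of Poincar\'e duality algebras (the volume polynomial is characterized as the polynomial whose annihilator presents $H^*(\Hess(S,h))$, computed there via hyperplane-arrangement techniques). Your route is a direct cohomological one resting on two ingredients. The second ingredient --- that applying $\partial_j$ to $\vol_\lambda$ inserts a factor $x_j$ under $\int_{\Fl(\C^n)}$, so that a homogeneous $P(\partial_1,\dots,\partial_n)$ of degree $c$ turns $\vol_\lambda(\Fl(\C^n))$ into $\frac{1}{(m-c)!}\int(\lambda_1x_1+\cdots+\lambda_nx_n)^{m-c}P(x)$ --- is an easy induction and is fine. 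The first ingredient, $[\Hess(S,h)]=\prod_{j=1}^{n-1}\prod_{i=h(j)+1}^{n}(x_j-x_i)$, is precisely Anderson--Tymoczko's degeneracy-locus formula (their equation (14), already invoked in Theorem~\ref{theorem:AT}; the displayed product is the specialization of the double Schubert polynomial of the dominant permutation $w_h$, which is why it agrees with the sum \eqref{eq: AT formula}). I would cite it rather than re-derive it: your iterated zero-locus argument still needs each successive zero scheme to be generically reduced, and smoothness plus correct dimension of the underlying reduced variety (which is what \cite{DeMa-Pr-Sh} provides) guarantees only that the localized Euler class is the fundamental cycle with multiplicities $\geq 1$, not that those multiplicities equal $1$ --- this is the transversality issue you yourself flag. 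With that reference in place your proof closes, and it is shorter and more self-contained than the cited ones; what the \cite{AHMMS} approach buys in exchange is a uniform framework that also yields ring presentations and covers regular nilpotent Hessenberg varieties in other Lie types.
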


\begin{remark} 
The discussion in \cite{AHMMS} is given in the language of Poincar\'e duality algebras, and \cite[Theorem 11.3]{AHMMS} is not stated in exactly the form as given above. The volume polynomial as discussed in \cite[Section 11]{AHMMS} is the polynomial whose annihilator is equal to the ideal defining the cohomology ring $H^*(\Hess(S,h))$ of the regular semisimple Hessenberg variety; it is only determined up to a scalar multiple. In particular, the polynomial $P_I$ as written in \cite[Section 11]{AHMMS} does not include the scalar multiple $\frac{1}{1!2!\cdots (n-1)!}$ given in the formula~\eqref{eq: vol flag}. The version used in the proof of \cite[Theorem 6.2]{ADGH}, on the other hand, does include this scalar multiple; its proof was based on the ideas of \cite[Theorem 11.3]{AHMMS}. 
\end{remark}

It immediately follows from Theorem~\ref{theorem: AHMMS} and~\eqref{eq: vol Flag is vol GZ} that \begin{equation}\label{eq: vol Hess as derivative}
\vol_\lambda(\Hess(S,h)) = \left( \prod_{j=1}^{n-1} \prod_{i=h(j)+1}^n (\partial_j - \partial_i) \right) \vol(\GZ(\lambda)).
\end{equation}

From~\eqref{eq: vol GZ} we can see that~\eqref{eq: vol Hess as derivative} expresses $\vol_\lambda(\Hess(S,h))$ as a result of a sequence of partial derivatives applied to a polynomial which has non-negative coefficients when written in the basis of monomials in the $\alpha_i=\lambda_i-\lambda_{i+1}$. 
Thus, it was natural for us to ask the following questions. 

\begin{itemize}
\item Can we explicitly compute the RHS of~\eqref{eq: vol Hess as derivative} in terms of the (volumes of the) faces of $\GZ(\lambda)$? 
\item Is the RHS of~\eqref{eq: vol Hess as derivative} a non-negative linear combination of monomials in the $\alpha_i:=\lambda_i-\lambda_{i+1}$'s? 
\end{itemize} 

The formula in Corollary~\ref{cor: Hess AT KST} will provide an answer to the first question, and a generalization of a result of Postnikov will answer the second. We discuss this in more detail in the next sections.

Before proceeding, however, we take a moment to note that the partial derivative operators appearing in~\eqref{eq: vol Hess as derivative} have a visual interpretation in terms of the box diagram corresponding to $h$ as in Figure~\ref{picture:h=(3,3,4,5,5)}. Specifically, the $(i,j)$ for which the operator $\partial_i - \partial_j$ appears in~\eqref{eq: vol Hess as derivative} correspond precisely to the boxes which are \emph{not} colored in the box diagram of $h$. 

\begin{example} 
Continuing with the setup of Example~\ref{example: box diagram}, the boxes that are not colored in Figure~\ref{picture:h=(3,3,4,5,5)} are the ones corresponding to the indices $(4,1), (4,2), (5,1), (5,2),$ and $(5,3)$. Accordingly it can be seen from~\eqref{eq: vol Hess from vol Fl} that in this case we have
\[
\vol_\lambda(\Hess(S,h)) = (\partial_1-\partial_4)(\partial_1-\partial_5)(\partial_2-\partial_4)(\partial_2-\partial_5)(\partial_3-\partial_5) \vol_\lambda(\Fl(\C^5)).
\]  
\end{example}

Moreover, it turns out that the results of the partial derivative operations in the RHS of~\eqref{eq: vol Hess as derivative} can, at least in small-$n$ cases, be interpreted very concretely in terms of the faces of $\GZ(\lambda)$. We illustrate some examples below; these gave us the idea for this paper.

\begin{example} 
Let $n=3$ and consider the Hessenberg function $h=(2,3,3)$. In this case~\eqref{eq: vol Hess as derivative} implies that we have 
\[
\vol_\lambda(\Hess(S,h)) = (\partial_1 - \partial_3) \vol(\GZ(\lambda)) = 
\partial_1 \vol(\GZ(\lambda)) - \partial_3 \vol(\GZ(\lambda)).
\]
The above formula can be understood directly and geometrically in terms of the polytope, as follows. Note that for $\varepsilon >0$ sufficiently small and for $\lambda' = (\lambda_1+\varepsilon, \lambda_2, \lambda_3)$, the difference between the volume of the polytope $\GZ(\lambda')$ and $\GZ(\lambda)$ is the volume (area) of the facet of $\GZ(\lambda)$ specified by $x=\lambda_1$ multiplied by $\varepsilon$. Thus, $\partial_1(\GZ(\lambda))$ is the area of the facet $\{x = \lambda_1\}$. A similar straightforward argument yields that $- \partial_3(\GZ(\lambda))$ is the area of the facet $\{y=\lambda_3\}$. Thus, we obtain that $\vol_\lambda(\Hess(S,h))$ is equal to the sum of the areas of the two facets $\{x=\lambda_1\}$ and $\{y=\lambda_3\}$, which are two of the six facets illustrated in Figure~\ref{fig: facets and dots for n=3}. 
\end{example} 

\begin{example}\label{example: n=4 first ex} 
Let $n=4$ and $\lambda_1 > \lambda_2 > \lambda_3 > \lambda_4$. Then the coordinates are organized as follows: 
\begin{equation}\label{eq:GZ n=4}
    \begin{array}{cccc}
        \lambda_1&  x_{1,2}    &    x_{1,3} & x_{1,4} \\   
         & \lambda_2 & x_{2,3} & x_{2,4}  \\
          & & \lambda_3 & x_{3,4} \\ 
           &  &  & \lambda_4. \\ 
     \end{array}
\end{equation}
For the Hessenberg function $h=(3,4,4,4)$, an argument similar to Example~\ref{example: volume for n=3 h=233} yields that $\vol_\lambda(\Hess(S,h))$ is the sum of the areas of the facets $\{x_{1,2}=\lambda_1\}$ and $\{x_{3,4}=\lambda_4\}$. See Figure~\ref{fig:hess_vol n=4 h=3444}. 

\begin{figure}[h]
\begin{center}
\begin{subfigure}[b]{.24\textwidth}
\begin{center}
    \begin{tikzpicture}[scale=0.6,font=\footnotesize]
    \tikzset{
    solid node/.style={circle,draw,inner sep=1.2,fill=black}
    }
        \node [solid node](01) at (0,3) {};
        \node [solid node](02) at (1,2) {};
        \node [solid node](03) at (2,1) {};
        \node [solid node](04) at (3,0) {};
        \node [solid node](11) at (1,3) {};
        \node [solid node](12) at (2,2) {};
        \node [solid node](13) at (3,1) {};
        \node [solid node](21) at (2,3) {};
        \node [solid node](22) at (3,2) {};
        \node [solid node](31) at (3,3) {};
        \path[thick] (01) edge (11);
    \end{tikzpicture}
    \end{center}
        \subcaption*{$\lambda_1=x_{1,2}$}
    \end{subfigure}
    \begin{subfigure}[b]{.24\textwidth}
        \begin{center}
    \begin{tikzpicture}[scale=0.6,font=\footnotesize]
    \tikzset{
    solid node/.style={circle,draw,inner sep=1.2,fill=black}
    }
        \node [solid node](01) at (0,3) {};
        \node [solid node](02) at (1,2) {};
        \node [solid node](03) at (2,1) {};
        \node [solid node](04) at (3,0) {};
        \node [solid node](11) at (1,3) {};
        \node [solid node](12) at (2,2) {};
        \node [solid node](13) at (3,1) {};
        \node [solid node](21) at (2,3) {};
        \node [solid node](22) at (3,2) {};
        \node [solid node](31) at (3,3) {};
        \path[thick] (04) edge (13);
        \end{tikzpicture}
        \end{center}
        \subcaption*{$x_{3,4}=\lambda_4$}
    \end{subfigure}
    \end{center}
    \caption{Face diagrams corresponding to $\vol_\lambda(\Hess(S,h))$ when $h=(3,4,4,4)$}\label{fig:hess_vol n=4 h=3444}
\end{figure}
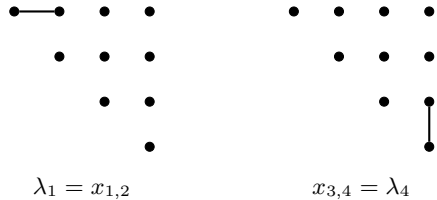

\end{example}

We take a moment to note that the results of Section~\ref{sec: combinatorics} additionally allows us to explicitly and directly interpret the results of the partial derivative operations in the RHS of~\eqref{eq: vol Hess as derivative} in terms of the faces of $\GZ(\lambda)$, in a manner independent of Theorem~\ref{thm: vol Hess 1}.

\begin{example}\label{example: n=4 h=2444 volume}
Let $n=4$ and suppose $h = (2,4,4,4)$. For the purpose of the next computation we denote by $S_1$ and $S_2$ the volumes of the areas of the facets $\{x_{1,2}=\lambda_1\}$ and $\{x_{3,4}=\lambda_4\}$ respectively. From~\eqref{eq: vol Hess as derivative} and from the computation in Example~\ref{example: n=4 first ex} we can conclude  
\begin{equation*}
\begin{split} 
\vol_\lambda(\Hess(S,h)) & = (\partial_1 - \partial_3) \vol_\lambda(\Hess(S,(3,4,4,4))) \\
 & = \partial_1(S_1) + \partial_1(S_2) - \partial_3(S_1) - \partial_3(S_2).
\end{split} 
\end{equation*}
Again, an argument similar to Example~\ref{example: volume for n=3 h=233} shows that $\partial_1(S_1)$ gives the area of the codimension-$2$ face $\{x_{1,2}=x_{1,3}=\lambda_1\}$ and $\partial_1(S_2)$ gives the area of $\{x_{1,2}=\lambda_1, x_{3,4}=\lambda_4\}$. In the same way we can compute that $-\partial_3(S_2)$ is the volume of the face $\{x_{2,3}=\lambda_3, x_{3,4}=\lambda_4\}$. The description of the term $- \partial_3(S_1)$ is slightly more involved. First, an argument similar to the previous ones shows that $-\partial_3(S_1)$ is the difference 
\[
- \partial_3(S_1) = \vol(\{x_{1,2}=\lambda_1, x_{2,3} = \lambda_3\}) - \vol(\{x_{1,2}=\lambda_1, x_{3,4}=\lambda_3\}).
\]
Now, applying Proposition~\ref{prop:x-relation} we obtain the relation 
\[
- \vol(\{x_{1,2}=\lambda_1, x_{3,4}=\lambda_3\}) = \vol(\{x_{1,2}=\lambda_1, x_{3,4}=x_{2,4}\}) -
\vol(\{x_{1,2}=\lambda_1, x_{3,4}=\lambda_4\}).
\]
Combining the above equations, we conclude that $\vol_\lambda(\Hess(S,h))$ is the sum of the volumes of the four faces of $\GZ(\lambda)$ depicted in Figure~\ref{fig: n=4 h=2444}, which is consistent with what we obtained in Example~\ref{example: cohomology n=4 h=2444}. 
\end{example}

We can now answer the two questions posed earlier. 
Firstly, from Corollary~\ref{cor: Hess AT KST} we obtain the following. 

\begin{theorem}\label{thm: vol Hess 1} 
Let $\Hess(S,h)$ be a regular semisimple Hessenberg variety. 
Then 
\begin{equation*}
\hspace{-30pt}
\vol_\lambda(\Hess(S,h))=\sum_{\substack{u,v \in \mathfrak{S}_n \\ v^{-1}u=\wh \\ \ell(u)+\ell(v)=\ell(\wh)}} \sum_{\substack{F:\text{ reduced Kogan face} \\ F^*:\text{ reduced dual Kogan face} \\ w(F)=u,\ w(F^*)=v}}  \vol(F \cap F^*)
\end{equation*}
where $\vol(F \cap F^*)$ denotes the $(m-d)$-dimensional volume  of $F\cap F^\ast$, where $m=n(n-1)/2$ and $d=\ell(u)+\ell(v)$. 
\end{theorem}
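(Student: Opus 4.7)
The plan is to obtain the desired identity by applying the linear functional $\vol_\lambda$ to both sides of the cohomological identity given in Corollary~\ref{cor: Hess AT KST}, and then to use a compatibility between the polytope ring construction of Kiritchenko-Smirnov-Timorin \cite{KST12} and the cohomology of $\Fl(\C^n)$ to identify each term $\vol_\lambda(\pi([F\cap F^*]))$ on the resulting right-hand side with the Euclidean volume $\vol(F\cap F^*)$.

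First, I would note that the assignment $\alpha \mapsto \vol_\lambda(\alpha)$ defined in equation~\eqref{eq: def vol_lambda} is $\R$-linear on each graded piece of $H^*(\Fl(\C^n))$. Since all summands $\pi([F\cap F^*])$ appearing on the right-hand side of Corollary~\ref{cor: Hess AT KST} live in the same cohomological degree $2\ell(\wh)$ as $[\Hess(S,h)]$, applying $\vol_\lambda$ termwise yields
\[
\vol_\lambda(\Hess(S,h)) = \sum_{\substack{u,v \in \mathfrak{S}_n \\ v^{-1}u=\wh \\ \ell(u)+\ell(v)=\ell(\wh)}} \sum_{\substack{F:\text{ reduced Kogan face} \\ F^*:\text{ reduced dual Kogan face} \\ w(F)=u,\ w(F^*)=v}}  \vol_\lambda\bigl(\pi([F\cap F^*])\bigr).
\]

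Second, the core identification needed is
\[
\vol_\lambda\bigl(\pi([F\cap F^*])\bigr) = \vol(F\cap F^*),
\]
where on the right-hand side we take the $(m-d)$-dimensional Euclidean volume of the face $F\cap F^*$ of $\GZ(\lambda)$ with $d=\ell(u)+\ell(v)=\ell(\wh)$. Dimensionally this is consistent: the Kogan face $F$ has codimension $\ell(u)$ in $\GZ(\lambda)$ and the dual Kogan face $F^*$ has codimension $\ell(v)$, so (assuming transversality of the defining equations, which is part of the reducedness hypothesis) the intersection has codimension $\ell(u)+\ell(v)=d$, matching the codimension of the corresponding cohomology class. This compatibility between Euclidean polytope volume and the intersection-theoretic volume polynomial is precisely the degree functional built into the KST polytope-ring framework: $\pi$ sends a virtual polytope to a cohomology class, and the top-degree (volume) functional on the polytope ring is intertwined with the operator $\frac{1}{d!}\int_{\Fl(\C^n)}(\sum_i \lambda_i x_i)^d(-)$ on cohomology under $\pi$. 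Alternatively, one can phrase this via Newton-Okounkov bodies, using Kaveh's result that $\GZ(\lambda)$ is the Newton-Okounkov body of $\Fl(\C^n)$ (with appropriate valuation) and that the faces in question are Newton-Okounkov bodies of the associated subvarieties with respect to the restriction of $L_\lambda$.

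Combining these two steps immediately yields the stated formula. I do not expect any genuine obstacle: the first step is purely linearity, and the second is a bookkeeping invocation of a compatibility that is essentially built into the KST construction (and whose Newton-Okounkov-body version is standard). The only point that requires any care is the dimension-matching between the polytope face $F\cap F^*$ and the cohomology class $\pi([F\cap F^*])$, which we have checked above. Thus the theorem follows as a direct consequence of Corollary~\ref{cor: Hess AT KST} combined with the polytope-ring/cohomology compatibility recorded in \cite{KST12}.
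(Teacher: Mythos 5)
Your proposal is correct and follows essentially the same route as the paper: apply $\vol_\lambda$ linearly to the Anderson--Tymoczko/Kiritchenko--Smirnov--Timorin decomposition and identify $\vol_\lambda(\pi([F\cap F^*]))$ with the Euclidean volume $\vol(F\cap F^*)$, a compatibility the paper likewise delegates to the argument in the proof of Theorem~4.3 of \cite{KST12}. Your dimension check and the Newton--Okounkov remark are reasonable supplements but do not change the substance of the argument.
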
 

\begin{proof} 
It follows from Theorem~\ref{theorem: KST} and the argument given in \cite[Proof of Theorem 4.3]{KST12} that 
\begin{equation}\label{eq: KST for volumes}
\vol_{\lambda}([X^u][X^{w_0vw_0}])=   \sum_{\substack{F:\text{ reduced Kogan face} \\ F^*:\text{ reduced dual Kogan face} \\ w(F)=u,\ w(F^*)=v}} \vol(F\cap F^*). 
\end{equation}
Since $\vol_{\lambda}$ is linear on cohomology classes by definition, the claim now follows immediately from Theorem~\ref{theorem:AT} and~\eqref{eq: KST for volumes}. 
\end{proof}

The above theorem provides an answer to the first question posed above, namely, we have expressed the RHS of~\eqref{eq: vol Hess as derivative} explicitly in terms of volumes of faces of $\GZ(\lambda)$.

Next, we answer the second question posed above. 
Indeed, by applying Proposition~\ref{prop:face of GZ}, we also immediately obtain the following, which is a manifestly positive and combinatorial formula for the volume polynomial of regular semisimple Hessenberg varieties, expressed in terms of $\alpha_i$'s.

\begin{theorem}\label{thm:vol_Hess 2}
Let $\Hess(S,h)$ be a regular semisimple Hessenberg variety. Then 
\begin{equation*}
\hspace{-30pt}
\vol_\lambda(\Hess(S,h))=\sum_{\substack{u,v \in \mathfrak{S}_n \\ v^{-1}u=\wh \\ \ell(u)+\ell(v)=\ell(\wh)}} \sum_{\substack{F:\text{ reduced Kogan face} \\ F^*:\text{ reduced dual Kogan face} \\ w(F)=u,\ w(F^*)=v}}\sum_{p_1,\dots,p_{n-1}\ge 0}N_{F \cap F^*}(p_1,\dots,p_{n-1})\frac{\alpha_1^{p_1}}{p_1!}\cdots\frac{\alpha_{n-1}^{p_{n-1}}}{p_{n-1}!}
\end{equation*}
where $N_{F \cap F^*}(p_1,\dots,p_{n-1})$ is the number of shifted tableaux $T$ associated with $F \cap F^*$ with the diagonal vector $\mathrm{diag}(T)=(1,p_1+2,p_1+p_2+3,\ldots,p_1+\cdots+p_{n-1}+n)$.
\end{theorem}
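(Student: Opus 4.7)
The plan is simply to combine the two results already established in the excerpt. Starting from Theorem~\ref{thm: vol Hess 1}, we have
\[
\vol_\lambda(\Hess(S,h))=\sum_{\substack{u,v \in \mathfrak{S}_n \\ v^{-1}u=\wh \\ \ell(u)+\ell(v)=\ell(\wh)}} \sum_{\substack{F:\text{ reduced Kogan face} \\ F^*:\text{ reduced dual Kogan face} \\ w(F)=u,\ w(F^*)=v}} \vol(F \cap F^*),
\]
so it suffices to expand each summand $\vol(F \cap F^*)$ in the monomial basis of the $\alpha_i = \lambda_i - \lambda_{i+1}$.

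Next, I would observe that for any Kogan face $F$ and dual Kogan face $F^*$, the intersection $F \cap F^*$ is itself a face of $\GZ(\lambda)$, since it is carved out by a collection of equations of the form $x_{i,j} = x_{i,j+1}$ (coming from $F$) together with equations of the form $x_{i,j} = x_{i+1,j}$ (coming from $F^*$), each of which is an admissible face equation for $\GZ(\lambda)$ as reviewed in Section~\ref{sec: background}. Hence Proposition~\ref{prop:face of GZ} applies directly to $F \cap F^*$ and yields
\[
\vol(F \cap F^*)=\sum_{p_1,\dots,p_{n-1}\ge 0}N_{F \cap F^*}(p_1,\dots,p_{n-1})\frac{\alpha_1^{p_1}}{p_1!}\cdots\frac{\alpha_{n-1}^{p_{n-1}}}{p_{n-1}!},
\]
with $N_{F \cap F^*}$ counting shifted tableaux associated to $F \cap F^*$ whose diagonal vector has the prescribed form. (If $F \cap F^*$ happens to be empty, there are no such shifted tableaux, so the corresponding $N_{F \cap F^*}$ vanishes and the term contributes $0$, consistent with $\vol(\emptyset) = 0$.)

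Substituting this expansion into the formula from Theorem~\ref{thm: vol Hess 1} and interchanging the order of summation gives exactly the stated formula. There is no real obstacle here: the combinatorial formula for the volume of an arbitrary face of $\GZ(\lambda)$ has already been proved in Proposition~\ref{prop:face of GZ}, and the decomposition of $\vol_\lambda(\Hess(S,h))$ into a sum of such face volumes has already been proved in Theorem~\ref{thm: vol Hess 1}; the content of Theorem~\ref{thm:vol_Hess 2} is the juxtaposition of these two results, and in particular the manifestly non-negative integrality of the coefficients in the $\alpha_i$-monomial basis follows at once from the fact that each $N_{F \cap F^*}(p_1,\ldots,p_{n-1})$ is a nonnegative integer by construction.
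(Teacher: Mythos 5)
Your proposal is correct and is exactly the paper's argument: the theorem is stated there as an immediate consequence of Theorem~\ref{thm: vol Hess 1} combined with Proposition~\ref{prop:face of GZ} applied to each face $F\cap F^*$. Your additional observations (that $F\cap F^*$ is a bona fide face of $\GZ(\lambda)$ and that empty intersections contribute zero) are sound and only make the deduction more explicit.
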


We illustrate the formula above in a simple example. 

\begin{example}
Continuing with the setup of Example~\ref{example: n=4 h=2444 volume}, 
let $F_1,\ldots,F_4$ denote the four faces (from left to right) in Figure~\ref{fig: n=4 h=2444}. Then we can compute the volume of each of the faces $F_1,\ldots,F_4$ by Proposition~\ref{prop:face of GZ} and we obtain: 
\begin{equation*}
\begin{split}
	\vol(F_1)&= \alpha_1\frac{\alpha_2^2}{2!}\alpha_3+2\frac{\alpha_2^3}{3!}\alpha_3+\alpha_1\alpha_2\frac{\alpha_3^2}{2!} + 2 \frac{\alpha_2^2}{2!}\frac{\alpha_3^2}{2!} + \alpha_2\frac{\alpha_3^3}{3!}\\
   \vol(F_2)&=\frac{\alpha_1^2}{2!}\frac{\alpha_3^2}{2!} + \alpha_1\alpha_2\frac{\alpha_3^2}{2!} + \frac{\alpha_2^2}{2!}\frac{\alpha_3^2}{2!}+\alpha_1\frac{\alpha_3^3}{3!}+\alpha_2\frac{\alpha_3^3}{3!}\\
	\vol(F_3)&=\frac{\alpha_1^2}{2!}\alpha_2\alpha_3+2\alpha_1\frac{\alpha_2^2}{2!}{\alpha_3}+2\frac{\alpha_2^3}{3!}\alpha_3+\alpha_1\alpha_2\frac{\alpha_3^2}{2!}+\frac{\alpha_2^2}{2!}\frac{\alpha_3^2}{2!}\\
	\vol(F_4)&=\frac{\alpha_1^3}{3!}\alpha_3+\frac{\alpha_1^2}{2!}\alpha_2\alpha_3+\alpha_1\frac{\alpha_2^2}{2!}\alpha_3+\frac{\alpha_1^2}{2!}\frac{\alpha_3^2}{2!}+\alpha_1\alpha_2\frac{\alpha_3^2}{2!}\\
\end{split}
\end{equation*}
Hence, when $n=4$ and $h=(2,4,4,4)$, the volume of $\Hess(S,h)$ is
$$
\frac{\alpha_1^3}{3!}\alpha_3+2\frac{\alpha_1^2}{2!}\alpha_2\alpha_3+4\alpha_1\frac{\alpha_2^2}{2!}\alpha_3+4\frac{\alpha_2^3}{3!}\alpha_3+2\frac{\alpha_1^2}{2!}\frac{\alpha_3^2}{2!}+4\alpha_1\alpha_2\frac{\alpha_3^2}{2!} +4 \frac{\alpha_2^2}{2!}\frac{\alpha_3^2}{2!}+\alpha_1\frac{\alpha_3^3}{3!}+2\alpha_2\frac{\alpha_3^3}{3!}.
$$
\end{example}

\section{A decomposition of the permutohedron into cubes} \label{sec:decomposing}

In this section we prove a combinatorial result about the permutohedron, which is of independent interest, but which were inspired by considerations in the previous sections. Let $\lambda = (\lambda_1, \lambda_2, \cdots, \lambda_n)$ where $\lambda_1 > \lambda_2 > \cdots > \lambda_n$, and let $\mathrm{Perm}(\lambda)$ denote the polytope obtained as the convex hull of the $n!$ vertices in $\R^n$ obtained by permuting the entries of $\lambda$. The polytope $\mathrm{Perm}(\lambda)$ is $(n-1)$-dimensional, and is the moment map image of the permutohedral variety $\Hess(S, h_1)$ where $h_1 := (2,3,\cdots, n, n)$. In the arguments below, we use the relationship between the Gelfand-Zetlin polytope $\GZ(\lambda)$ and $\mathrm{Perm}(\lambda)$ in order to show that the polytope $\mathrm{Perm}(\lambda)$ can be decomposed into $(n-1)!$ many subpolytopes, each of which is combinatorially an $(n-1)$-cube. 

We first recall that the one-to-one correspondence between the set of simple vertices in $\GZ(\lambda)$ and the permutation group $\mathfrak{S}_n$ \cite[Section 5]{Ki2010}. For each integer $i$ with $0 \leq i \leq n-1$, choose an integer $d_i$ satisfying $1 \leq d_i \leq n-i$. There are $n!$ many possible such sequences $d = (d_0, d_1, \ldots, d_{n-1})$. Moreover, each such $d$ corresponds to a set of equalities 
\begin{equation}\label{eq: perm eq}
x_{i, i+j} = x_{i, i+j+1} \textup{ for } 1 \leq i < d_j, \quad \textup{ and } \quad 
x_{i, i+j} = x_{i-1, i+j} \textup{ for } d_j < i \leq n-j.
\end{equation}
The above set of equalities specifies a single simple vertex of $\GZ(\lambda)$, which we denote $v_d$. Conversely, it is known that any simple vertex of $\GZ(\lambda)$ is $v_d$ for some $d$ as above. It is also useful to visualize this in terms of the face diagrams as in Figure~\ref{fig:ex:face_diagram}. Indeed, the face diagram for a set of equations of the form~\eqref{eq: perm eq} is a collection of $n$ paths (possibly of length $0$). For each integer $k$ with $1 \leq k \leq n$, let $w_d(k)$ denote the number of vertices in the face diagram contained in the path which contains the vertex corresponding to $\lambda_k$.

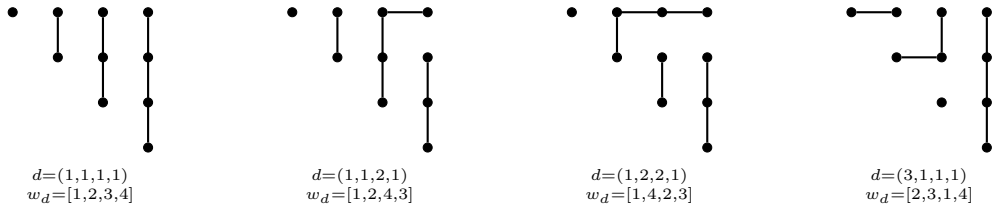
\begin{figure}[h]
\centering
\begin{subfigure}[b]{.24\textwidth}
\centering
\begin{tikzpicture}[scale=0.6,font=\footnotesize]
    \tikzset{
    solid node/.style={circle,draw,inner sep=1.2,fill=black}
    }
        \node [solid node](01) at (0,3) {};
        \node [solid node](02) at (1,2) {};
        \node [solid node](03) at (2,1) {};
        \node [solid node](04) at (3,0) {};
        \node [solid node](11) at (1,3) {};
        \node [solid node](12) at (2,2) {};
        \node [solid node](13) at (3,1) {};
        \node [solid node](21) at (2,3) {};
        \node [solid node](22) at (3,2) {};
        \node [solid node](31) at (3,3) {};
        \path[thick] (02) edge (11);
        \path[thick](03) edge (21);
        \path[thick](04) edge (31);
    \end{tikzpicture}
        \subcaption*{$d=(1,1,1,1)\atop w_d=[1,2,3,4]$}
\end{subfigure}
\begin{subfigure}[b]{.24\textwidth}
\centering
\begin{tikzpicture}[scale=0.6,font=\footnotesize]
    \tikzset{
    solid node/.style={circle,draw,inner sep=1.2,fill=black}
    }
        \node [solid node](01) at (0,3) {};
        \node [solid node](02) at (1,2) {};
        \node [solid node](03) at (2,1) {};
        \node [solid node](04) at (3,0) {};
        \node [solid node](11) at (1,3) {};
        \node [solid node](12) at (2,2) {};
        \node [solid node](13) at (3,1) {};
        \node [solid node](21) at (2,3) {};
        \node [solid node](22) at (3,2) {};
        \node [solid node](31) at (3,3) {};
        \path[thick] (02) edge (11);
        \path[thick](03) edge (21);
        \path[thick] (21)  edge (31);
        \path[thick](04) edge (22);
    \end{tikzpicture}
        \subcaption*{$d=(1,1,2,1) \atop w_d=[1,2,4,3]$}
\end{subfigure}
\begin{subfigure}[b]{.24\textwidth}
\centering
\begin{tikzpicture}[scale=0.6,font=\footnotesize]
    \tikzset{
    solid node/.style={circle,draw,inner sep=1.2,fill=black}
    }
        \node [solid node](01) at (0,3) {};
        \node [solid node](02) at (1,2) {};
        \node [solid node](03) at (2,1) {};
        \node [solid node](04) at (3,0) {};
        \node [solid node](11) at (1,3) {};
        \node [solid node](12) at (2,2) {};
        \node [solid node](13) at (3,1) {};
        \node [solid node](21) at (2,3) {};
        \node [solid node](22) at (3,2) {};
        \node [solid node](31) at (3,3) {};
        \path[thick] (02) edge (11);
        \path[thick] (11) edge (31);
        \path[thick](03) edge (12);
        \path[thick](04) edge (22);
    \end{tikzpicture}
        \subcaption*{$d=(1,2,2,1) \atop w_d=[1,4,2,3]$}
\end{subfigure}
\begin{subfigure}[b]{.24\textwidth}
\centering
\begin{tikzpicture}[scale=0.6,font=\footnotesize]
    \tikzset{
    solid node/.style={circle,draw,inner sep=1.2,fill=black}
    }
        \node [solid node](01) at (0,3) {};
        \node [solid node](02) at (1,2) {};
        \node [solid node](03) at (2,1) {};
        \node [solid node](04) at (3,0) {};
        \node [solid node](11) at (1,3) {};
        \node [solid node](12) at (2,2) {};
        \node [solid node](13) at (3,1) {};
        \node [solid node](21) at (2,3) {};
        \node [solid node](22) at (3,2) {};
        \node [solid node](31) at (3,3) {};
        \path[thick] (01) edge (11);
        \path[thick] (02) edge (12);
        \path[thick] (12) edge (21);
        \path[thick](04) edge (31);
    \end{tikzpicture}
        \subcaption*{$d=(3,1,1,1) \atop w_d=[2,3,1,4]$}
\end{subfigure}
\caption{Examples of $v_d$ and $w_d$ when $n=4$}
\end{figure}


Then $w_d := (w_d(k))_{k=1}^n$ defines a permutation in $\mathfrak{S}_n$, and the correspondence $v_d \mapsto w_d$ is the bijection between the set of simple vertices of $\GZ(\lambda)$ and $\mathfrak{S}_n$.

Next, we briefly recall the relation between $\GZ(\lambda)$ and $\Perm(\lambda)$. For a point $(x_{i,j}) \in \GZ(\lambda) \subseteq \R^{n(n-1)/2}$ let us define 
\begin{equation}\label{eq: def y_k}
y_k := \sum_{i=1}^{n-k} x_{i, i+k} 
\end{equation}
for $k$ an integer with $0 \leq k \leq n-1$. Since $x_{i,i} = \lambda_i$ by definition, we conclude that $y_0 := \sum_{i=1}^n \lambda_i$ is a constant.  Define the map 
\begin{equation}\label{eq: def Phi}
\Phi((x_{i,j})) := (y_0 -y_1, y_1 - y_2, \cdots, y_{n-2}-y_{n-1}, y_{n-1})
\end{equation}
from $\R^{n(n-1)/2} \to \R^n$. Then it follows that $\Phi(\GZ(\lambda)) = \Perm(\lambda)$ \cite[Section 5]{KST12}.  It is straightforward to check that if we let $p_w$ denote the simple vertex of $\GZ(\lambda)$ corresponding to the permutation $w \in \mathfrak{S}_n$, then $\Phi(p_w) = (\lambda_{w^{-1}(1)}, \lambda_{w^{-1}(2)}, \cdots, \lambda_{w^{-1}(n)})$.

The following technical lemma will be useful in what follows.

\begin{lemma}\label{lemma: move b} 
Let $a_1 > a_2 > \cdots > a_m$ be real numbers. Suppose $b_1$, $b_2$, $\cdots$, $b_{m-1}$ are real numbers satisfying the inequalities 
\begin{equation*} \label{eq:3-1}
    \begin{array}{cccccc}
        a_1&  b_1    &       &         & 	& \\
                         &a_2&  b_2       &      & 	& \\
                         &                 &  \ddots	      &		\ddots    &	      &\\
                         &                 &                  &a_{m-2}&b_{m-2}	 &\\
                         &                 &                  &                      &a_{m-1}&b_{m-1}\\
                         &                 &                  &                      &                      &a_m
    \end{array}
\end{equation*} 
where we take the convention of~\eqref{eq:abc}. Then there exists an integer $k$ with $1 \leq k \leq m-1$ and a unique real number $b'_k$ with $a_k \geq b'_k \geq a_{k+1}$ such that 
for the collection of real numbers $b'_j$ ($j \neq k$) defined by 
\[
b'_1 := a_1, \cdots, b'_{k-1} := a_{k-1}, b'_{k+1} := a_{k+2}, \cdots, b'_{m-1} = a_m
\]
we have 
\begin{equation}\label{eq: b' and b} 
\sum_{j=1}^{m-1} b_j = \sum_{j=1}^{m-1} b'_j. 
\end{equation}

\end{lemma}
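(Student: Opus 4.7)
The plan is to reduce the problem to a one-dimensional intermediate-value style argument. First, I would observe that once an index $k$ is chosen, the prescription $b'_j = a_j$ for $j<k$ and $b'_j = a_{j+1}$ for $j>k$ forces
\[
\sum_{j=1}^{m-1} b'_j = S_k + b'_k, \qquad S_k := \sum_{j=1}^{k-1} a_j + \sum_{j=k+1}^{m-1} a_{j+1}.
\]
Therefore, writing $T := \sum_{j=1}^{m-1} b_j$, the condition~\eqref{eq: b' and b} is equivalent to $b'_k = T - S_k$, and the constraint $a_{k+1} \leq b'_k \leq a_k$ becomes $T \in I_k := [S_k + a_{k+1},\, S_k + a_k]$. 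The uniqueness of $b'_k$ for a given admissible $k$ is then automatic from this formula, so the core of the proof is to identify an index $k$ for which $T \in I_k$.

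Next, I would verify that the intervals $I_1, I_2, \ldots, I_{m-1}$ tile the closed interval
\[
J := \Bigl[\,\sum_{j=1}^{m-1} a_{j+1},\ \sum_{j=1}^{m-1} a_j\,\Bigr]
\]
consecutively. A direct computation gives $S_k + a_k = S_{k+1} + a_{k+2}$, so the right endpoint of $I_k$ coincides with the left endpoint of $I_{k+1}$; moreover the left endpoint of $I_1$ and the right endpoint of $I_{m-1}$ match the endpoints of $J$. The hypotheses $a_j \geq b_j \geq a_{j+1}$ immediately imply $T \in J$, so $T$ lies in some $I_k$, and I would pick such a $k$ and set $b'_k := T - S_k$.

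Finally, I would note that if $T$ lies in the interior of $I_k$ then the choice of $k$ is forced and $b'_k$ is strictly between $a_{k+1}$ and $a_k$; if $T$ happens to land on the common endpoint $S_k + a_k = S_{k+1} + a_{k+2}$, then either $k$ or $k+1$ may be chosen, but both choices yield the same final sequence $(a_1,\ldots,a_k,a_{k+2},\ldots,a_m)$, so the output is still well-defined. The main (mild) obstacle is purely bookkeeping: keeping the shifted indexing between $b_j$, $a_j$, and $a_{j+1}$ straight when computing $S_k$ and showing the tiling property. No deeper structure is needed, as the result is essentially a telescoping/intermediate-value statement dressed in the language of Gelfand--Zetlin inequalities.
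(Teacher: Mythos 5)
Your proposal is correct and follows essentially the same route as the paper: after noting that the choice of $k$ forces $b'_k = T - S_k$ (the paper writes this as $b'_k = B - (A - a_k - a_{k+1})$ with $A = \sum a_j$, $B = \sum b_j$), both arguments locate $k$ by observing that the target sum lies in the interval $[A-a_1, A-a_m]$, which is tiled by the consecutive subintervals $[A-a_k, A-a_{k+1}]$. Your explicit treatment of the boundary case where $T$ lands on a shared endpoint is a small but welcome addition that the paper's proof leaves implicit.
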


\begin{proof} 
It is clear from the definition of the $b'_j$ and the property~\eqref{eq: b' and b} that if such real numbers $b'_j$ exist then the choice is unique. So it suffices to prove existence. Set $A := \sum_{j=1}^m a_j$ and $B := \sum_{j=1}^{m-1} b_j$. Then from the inequality $a_{i+1} \leq b_i$ we obtain $A-a_1 \leq B$. From the inequality $b_i \leq a_i$ we obtain $B \leq A - a_m$. Moreover, from the assumption $a_1 > a_2 > \cdots > a_m$ we obtain 
\[
A- a_1 < A- a_2 < \cdots < A - a_m.
\]
Therefore, there must exist a $k$ such that $A - a_k \leq B \leq A-a_{k+1}$. Define $b'_k := B - (A-a_k-a_{k+1})$ for this $k$, and define $b'_j$ for $j \neq k$ as given in the claim. Then $a_k \geq b'_k \geq a_{k+1}$ and $\sum_{j=1}^{m-1} b_j = \sum_{j=1}^{m-1} b'_j$. The claim follows. 
\end{proof}

In order to state the next result, we introduce some notation for certain faces of $\GZ(\lambda)$. First we define 
\begin{equation}\label{eq: def calFn}
\mathcal{F}_n := \{ \mathbf{r} = (r_1, \ldots, r_{n-1}) \in \Z^{n-1} \, \mid \, 1 \leq r_j \leq n-j, \textup{ for } 1 \leq j \leq n-1\}.
\end{equation}
For an element $\mathbf{r} = (r_1, \ldots, r_{n-1}) \in \mathcal{F}_n$, we let $F(\mathbf{r})$ denote the face of $\GZ(\lambda)$ defined by the $(n-1)(n-2)/2$ many equalities 
\begin{equation}\label{eq: equations for Fr}
x_{i, j+i} = x_{i, j+i-1} \textup{ for } 1 \leq i < r_j, \quad \textup{ and } \quad x_{i, j+i} = x_{i+1, j+i} \textup{ for } r_j < i \leq n-j.
\end{equation}
(Note that $r_{n-1}=1$ by definition so when $j=n-1$ the corresponding equation in~\eqref{eq: equations for Fr} is vacuous.) In Figure~\ref{fig:example-F(r)} we illustrate some examples.

\begin{figure}[h]
    \begin{center}
    \begin{subfigure}[b]{.3\textwidth}
    \begin{center}
    \begin{tikzpicture}[scale=0.6,font=\footnotesize]
        \tikzset{
        solid node/.style={circle,draw,inner sep=1.2,fill=black},
        hollow node/.style={circle,draw,inner sep=1.2}
        }
        \node [solid node](01) at (0,3) {};
        \node [solid node](02) at (1,2) {};
        \node [solid node](03) at (2,1) {};
        \node [solid node](04) at (3,0) {};
        \node [solid node](11) at (1,3) {};
        \node [solid node](12) at (2,2) {};
        \node [solid node](13) at (3,1) {};
        \node [solid node](21) at (2,3) {};
        \node [solid node](22) at (3,2) {};
        \node [solid node](31) at (3,3) {};
        \path (12) edge (03);
        \path (13) edge (04);
        \path (22) edge (13);
    \end{tikzpicture}
    \end{center}
    \subcaption*{$F(1,1,1)$}
    \end{subfigure}
    \begin{subfigure}[b]{.3\textwidth}
    \begin{center}
    \begin{tikzpicture}[scale=0.6,font=\footnotesize]
        \tikzset{
        solid node/.style={circle,draw,inner sep=1.2,fill=black},
        hollow node/.style={circle,draw,inner sep=1.2}
        }
        \node [solid node](01) at (0,3) {};
        \node [solid node](02) at (1,2) {};
        \node [solid node](03) at (2,1) {};
        \node [solid node](04) at (3,0) {};
        \node [solid node](11) at (1,3) {};
        \node [solid node](12) at (2,2) {};
        \node [solid node](13) at (3,1) {};
        \node [solid node](21) at (2,3) {};
        \node [solid node](22) at (3,2) {};
        \node [solid node](31) at (3,3) {};
        \path (11) edge (01);
        \path (13) edge (04);
        \path (22) edge (13);
    \end{tikzpicture}
    \end{center}
    \subcaption*{$F(2,1,1)$}
    \end{subfigure}
    \begin{subfigure}[b]{.3\textwidth}
    \begin{center}
    \begin{tikzpicture}[scale=0.6,font=\footnotesize]
        \tikzset{
        solid node/.style={circle,draw,inner sep=1.2,fill=black},
        hollow node/.style={circle,draw,inner sep=1.2}
        }
        \node [solid node](01) at (0,3) {};
        \node [solid node](02) at (1,2) {};
        \node [solid node](03) at (2,1) {};
        \node [solid node](04) at (3,0) {};
        \node [solid node](11) at (1,3) {};
        \node [solid node](12) at (2,2) {};
        \node [solid node](13) at (3,1) {};
        \node [solid node](21) at (2,3) {};
        \node [solid node](22) at (3,2) {};
        \node [solid node](31) at (3,3) {};
        \path (11) edge (01);
        \path (12) edge (02);
        \path (22) edge (13);
    \end{tikzpicture}
    \end{center}
    \subcaption*{$F(3,1,1)$}
    \end{subfigure}

    \vspace{.2cm}

    \begin{subfigure}[b]{.3\textwidth}
    \begin{center}
    \begin{tikzpicture}[scale=0.6,font=\footnotesize]
        \tikzset{
        solid node/.style={circle,draw,inner sep=1.2,fill=black},
        hollow node/.style={circle,draw,inner sep=1.2}
        }
        \node [solid node](01) at (0,3) {};
        \node [solid node](02) at (1,2) {};
        \node [solid node](03) at (2,1) {};
        \node [solid node](04) at (3,0) {};
        \node [solid node](11) at (1,3) {};
        \node [solid node](12) at (2,2) {};
        \node [solid node](13) at (3,1) {};
        \node [solid node](21) at (2,3) {};
        \node [solid node](22) at (3,2) {};
        \node [solid node](31) at (3,3) {};
        \path (12) edge (03);
        \path (13) edge (04);
        \path (21) edge (11);
    \end{tikzpicture}
    \end{center}
    \subcaption*{$F(1,2,1)$}
    \end{subfigure}
    \begin{subfigure}[b]{.3\textwidth}
    \begin{center}
    \begin{tikzpicture}[scale=0.6,font=\footnotesize]
        \tikzset{
        solid node/.style={circle,draw,inner sep=1.2,fill=black},
        hollow node/.style={circle,draw,inner sep=1.2}
        }
        \node [solid node](01) at (0,3) {};
        \node [solid node](02) at (1,2) {};
        \node [solid node](03) at (2,1) {};
        \node [solid node](04) at (3,0) {};
        \node [solid node](11) at (1,3) {};
        \node [solid node](12) at (2,2) {};
        \node [solid node](13) at (3,1) {};
        \node [solid node](21) at (2,3) {};
        \node [solid node](22) at (3,2) {};
        \node [solid node](31) at (3,3) {};
        \path (11) edge (01);
        \path (13) edge (04);
        \path (21) edge (11);
    \end{tikzpicture}
    \end{center}
    \subcaption*{$F(2,2,1)$}
    \end{subfigure}
    \begin{subfigure}[b]{.3\textwidth}
    \begin{center}
    \begin{tikzpicture}[scale=0.6,font=\footnotesize]
        \tikzset{
        solid node/.style={circle,draw,inner sep=1.2,fill=black},
        hollow node/.style={circle,draw,inner sep=1.2}
        }
        \node [solid node](01) at (0,3) {};
        \node [solid node](02) at (1,2) {};
        \node [solid node](03) at (2,1) {};
        \node [solid node](04) at (3,0) {};
        \node [solid node](11) at (1,3) {};
        \node [solid node](12) at (2,2) {};
        \node [solid node](13) at (3,1) {};
        \node [solid node](21) at (2,3) {};
        \node [solid node](22) at (3,2) {};
        \node [solid node](31) at (3,3) {}; 
        \path (11) edge (01);
        \path (12) edge (02);
        \path (21) edge (11);
    \end{tikzpicture}
    \end{center}
    \subcaption*{$F(3,2,1)$}
    \end{subfigure}
    \end{center}
    \caption{Example for $n=4$}\label{fig:example-F(r)}
\end{figure}
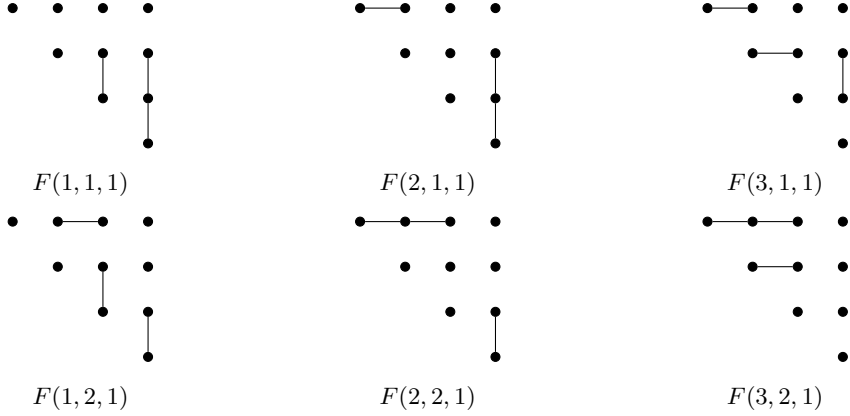

We have the following. 

\begin{proposition}\label{prop: volume-preserving}
Let $\Phi\colon \bigcup_{\mathbf{r}\in\mathcal{F}_n}F(\mathbf{r})\to \Perm(\lambda)$ be the restriction to $\bigcup_{\mathbf{r} \in \mathcal{F}_n} F(\mathbf{r}) \subseteq \GZ(\lambda)$ of the map $\Phi$ defined in~\eqref{eq: def Phi}. Then $\Phi$ is a bijection between $\bigcup_{\mathbf{r}\in\mathcal{F}_n}F(\mathbf{r})$ and $\Perm(\lambda)$, and $\Phi$ is volume-preserving  on each face $F(\mathbf{r})$. In particular, $\vol(\Perm(\lambda))=\sum_{\mathbf{r}\in \mathcal{F}_n}\vol(F(\mathbf{r}))$.
\end{proposition}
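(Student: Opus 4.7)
The plan is to prove volume-preservation on each face $F(\mathbf{r})$ via a direct Jacobian computation, and to establish the global bijection by constructing an explicit diagonal-by-diagonal inverse $\Psi\colon\Perm(\lambda)\to\bigcup_{\mathbf{r}\in\mathcal{F}_n}F(\mathbf{r})$ via repeated pointwise application of Lemma~\ref{lemma: move b}. First, introduce natural coordinates $(t_1,\ldots,t_{n-1})$ on $F(\mathbf{r})$ by setting $t_j:=x_{r_j,r_j+j}$, the unique free variable on the $j$-th upper diagonal. Chasing the defining equalities~\eqref{eq: equations for Fr} one step at a time, every other entry $x_{i,i+j}$ reduces recursively to either some $\lambda_k$ or some $t_{j'}$ with $j'<j$; in particular, the diagonal sum $y_j$ is an affine function of $(t_1,\ldots,t_j)$ whose coefficient of $t_j$ is exactly $1$. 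Since $(y_1,\ldots,y_{n-1})$ and $(\phi_2,\ldots,\phi_n)$ (where $\phi_j$ denotes the $j$-th coordinate of $\Phi$) are related by a unimodular change of basis, one may use $(y_1,\ldots,y_{n-1})$ as affine coordinates on $\Perm(\lambda)$; the Jacobian matrix $\bigl[\partial y_j/\partial t_k\bigr]$ is then lower triangular with $1$'s on the diagonal, so $\Phi|_{F(\mathbf{r})}$ is a volume-preserving linear embedding of $F(\mathbf{r})$ into $\Perm(\lambda)$.

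To construct $\Psi$, given $y=(\phi_1,\ldots,\phi_n)\in\Perm(\lambda)$, I choose $r_1$ with $\lambda_{r_1+1}\le\phi_1\le\lambda_{r_1}$ and set $t_1:=\lambda_{r_1}+\lambda_{r_1+1}-\phi_1$. Inductively, having constructed a strictly decreasing sequence $u^{(j-1)}_1>\cdots>u^{(j-1)}_{n-j+1}$ of diagonal-$(j-1)$ entries, I choose $r_j$ with $u^{(j-1)}_{r_j+1}\le\phi_j\le u^{(j-1)}_{r_j}$, set $t_j:=u^{(j-1)}_{r_j}+u^{(j-1)}_{r_j+1}-\phi_j$, and fill in the remaining entries of diagonal $j$ using the equations of $F(\mathbf{r})$. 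A short calculation verifies that the new diagonal $j$ sum equals $y_{j-1}-\phi_j=y_j$ and that the sequence $u^{(j)}$ remains strictly decreasing, so the construction propagates and yields $\Psi(y)\in F(\mathbf{r})$ with $\Phi(\Psi(y))=y$.

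The main obstacle is the feasibility condition $\phi_j\in[u^{(j-1)}_{n-j+1},u^{(j-1)}_1]$ at every step, which is precisely what guarantees the existence of a valid $r_j$. Unwinding the recursive definition, each extreme entry $u^{(j-1)}_1$ or $u^{(j-1)}_{n-j+1}$ equals either $\lambda_1$ or $\lambda_n$ (for which feasibility follows immediately from $\phi_j\in[\lambda_n,\lambda_1]$) or some earlier $t_{j'}$; in the latter case, iteratively substituting the defining formula $t_{j'}=u^{(j'-1)}_{r_{j'}}+u^{(j'-1)}_{r_{j'}+1}-\phi_{j'}$ rewrites the feasibility inequality as one of the standard linear constraints $\phi_{i_1}+\cdots+\phi_{i_k}\le\lambda_1+\cdots+\lambda_k$ or $\phi_{i_1}+\cdots+\phi_{i_k}\ge\lambda_{n-k+1}+\cdots+\lambda_n$ that define $\Perm(\lambda)$. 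Non-uniqueness of $r_j$ occurs only when $\phi_j$ coincides with $u^{(j-1)}_{r_j}$ or $u^{(j-1)}_{r_j+1}$, but any valid choice then produces the same point in $\GZ(\lambda)$, lying on the shared boundary between adjacent $F(\mathbf{r})$'s. Combining these ingredients, $\Psi$ is a well-defined inverse to $\Phi$ on the interiors, yielding the claimed bijection; the volume identity $\vol(\Perm(\lambda))=\sum_{\mathbf{r}\in\mathcal{F}_n}\vol(F(\mathbf{r}))$ then follows from volume-preservation.
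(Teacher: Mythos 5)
Your volume-preservation argument is essentially identical to the paper's: both use the free variables $x_{r_j,\,j+r_j}$ as coordinates on $F(\mathbf{r})$, observe that $y_j$ depends affinely on $t_1,\dots,t_j$ with coefficient $1$ on $t_j$, and conclude via a unimodular triangular Jacobian. Where you genuinely diverge is the bijection. The paper never touches the inequality description of $\Perm(\lambda)$: it uses the known fact $\Phi(\GZ(\lambda))=\Perm(\lambda)$, starts from an \emph{arbitrary} preimage $(x_{i,j})\in\GZ(\lambda)$, and redistributes each diagonal in turn via Lemma~\ref{lemma: move b} (keeping the diagonal sums $y_k$ fixed) to land in some $F(\mathbf{r})$; injectivity comes from the uniqueness clause of that lemma. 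You instead build the inverse directly from a point of $\Perm(\lambda)$, which forces you to verify at every step that a valid $r_j$ exists, i.e.\ that $\phi_j\in[u^{(j-1)}_{n-j+1},\,u^{(j-1)}_1]$. Your route has the virtue of being self-contained (it would re-prove $\Perm(\lambda)\subseteq\Phi(\GZ(\lambda))$ as a byproduct), but this feasibility check is the entire content of surjectivity in your formulation, and it is only asserted: the claim that unwinding $u^{(j-1)}_1$ and $u^{(j-1)}_{n-j+1}$ always produces one of the defining inequalities $\sum_{i\in I}\phi_i\le\lambda_1+\cdots+\lambda_{|I|}$ (resp.\ $\ge\lambda_{n-|I|+1}+\cdots+\lambda_n$) requires a nontrivial induction tracking, for each branch of the recursion (whether $r_{j'}$ equals, exceeds, or falls below the relevant row index), which index set $I$ appears. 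It checks out in small cases ($n=3,4$), and I believe it can be completed, but as written it is the one step that is not actually proved; to make the argument airtight you should either carry out that induction or adopt the paper's shortcut of lifting through an arbitrary Gelfand--Zetlin preimage, which sidesteps the permutohedron's inequalities altogether. Your remarks on injectivity and on the boundary ambiguity in the choice of $r_j$ (both admissible choices drop the same entry $u^{(j-1)}_{r_j}$ and yield the same point) are correct and mirror the role of uniqueness in Lemma~\ref{lemma: move b}.
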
 

\begin{proof} 
Let $(x_{i,j}) \in \GZ(\lambda)$. Define 
\begin{equation}\label{eq: def Psi} 
\Psi((x_{i,j})) := (y_0, y_1, \ldots, y_{n-1}) \in \R^n. 
\end{equation}
It is then clear from~\eqref{eq: def Phi} that $\Phi$ and $\Psi$ are related by a matrix multiplication, i.e. 
\[
\Phi((x_{i,j})) = \Psi((x_{i,j})) A \quad \textup{ for some } A \in \SL_n(\Z).
\]
Since we know $\Phi(\GZ(\lambda)) = \Perm(\lambda)$, to prove the first claim of the proposition it suffices to show that the restriction $\overline{\Psi}$ of $\Psi$ to $\bigcup_{\mathbf{r} \in \mathcal{F}_n} F(\mathbf{r})$ 
\[
\overline{\Psi}: \bigcup_{\mathbf{r} \in \mathcal{F}_n} F(\mathbf{r}) \to \Psi(\GZ(\lambda)) \subseteq \R^n
\]
is a bijection.  We do this using Lemma~\ref{lemma: move b} repeatedly, as follows.  

We start with surjectivity. Suppose given a point $\Psi((x_{i,j}))$ for $(x_{i,j}) \in \GZ(\lambda)$. We construct a point $(x'_{i,j}) \in \bigcup_{\mathbf{r} \in \mathcal{F}_n} F(\mathbf{r})$ which lies in the preimage of $(x_{i,j})$ as follows, by repeatedly applying Lemma~\ref{lemma: move b}. First, take $a_i=\lambda_i$ for $1 \leq i \leq n$ and $b_i = x_{i,i+1}$ for $1 \leq i \leq n-1$ and apply Lemma~\ref{lemma: move b} to obtain values $b'_i$ satisfying $\sum b'_i = \sum x_{i,i+1} = y_1$. Now set $x'_{i,i+1} := b'_i$ for $1 \leq i \leq n-1$. Next, redefine $a_i := x'_{i,i+1}$ for $1 \leq i \leq n-1$ and $b_i := x_{i, i+2}$ for $1 \leq i \leq n-2$ and apply Lemma~\ref{lemma: move b} again to obtain (new) values $b'_i$ with $\sum b'_i = \sum x_{i, i+2} = y_2$. Set $x'_{i, i+2} := b'_i$ for $1 \leq i \leq n-2$. Proceeding similarly we obtain $(x'_{i,j})$ which, by construction, maps to $\Psi((x_{i,j})) = (y_1, y_2, \ldots, y_{n-1})$ and which lies in a face $F(\mathbf{r})$ of $\GZ(\lambda)$ for some $\mathbf{r}$. Hence, $\overline{\Psi}$ is surjective. It follows from the second claim of Lemma~\ref{lemma: move b} that $\overline{\Psi}$ is injective. 

Next we prove the second claim of the proposition. From the fact that $\Phi=\Psi \cdot A$ for $A \in \SL_n(\Z)$ it follows that it suffices to show that $\overline{\Psi}$ preserves volume when restricted to each $F(\mathbf{r})$. For the remainder of this argument we may therefore fix an $\mathbf{r} \in \mathcal{F}_n$. For each such $\mathbf{r}$, the map $(x_{i,j}) \mapsto (x_{r_j, j+r_j})_{1 \leq j \leq n-1}$ naturally identifies the $(n-1)$-dimensional subspace spanned by $F(\mathbf{r})$ with $\R^{n-1}$. In what follows, we take as coordinates in $\R^{n-1}$ the variables $z_j := x_{r_j, j+r_j}$ and identify $F(\mathbf{r})$ with its image in $\R^{n-1}$ under the map above. With respect to these coordinates we have 
\[
\Psi(z_1,\dots,z_{n-1})=(z_1,\dots,z_{n-1})B \quad (B\in \SL_{n-1}(\Z)).
\]
Here we have dropped the coordinate $y_0 = \sum_{j=1}^n \lambda_j$ since it is just a constant and we are ignoring the parallel translation by $y_0$ since this is clearly volume-preserving. In the above equality, $B$ can be seen to be an upper-triangular matrix with $1$'s on the diagonal, so it preserves volume. Hence $\overline{\Psi}$ preserves volume, as desired. 
\end{proof}

Since $F(\mathbf{r})$ is a face of $\GZ(\lambda)$, we can compute $\vol(F(\mathbf{r}))$ using Proposition~\ref{prop:face of GZ} in the language of shifted Young tableau. From Proposition~\ref{prop: volume-preserving} we also obtain a computation of $\vol(\Perm(\lambda)) = \sum_{\mathbf{r}\in \mathcal{F}_n}\vol(F(\mathbf{r}))$ in the same language. Below we illustrate some examples. 

\begin{example} 
For $(q_1, q_2, \ldots, q_{n-1}) \in \Z^{n-1}_{\geq 0}$ we introduce notation 
\[
\alpha^{(q_1,\dots,q_{n-1})}:=\frac{\alpha_1^{q_1}}{q_1!}\cdots\frac{\alpha_{n-1}^{q_{n-1}}}{q_{n-1}!}.
\]
Let $n=4$. By definition we have 
$\mathcal{F}_4=\{\mathbf{r}=(r_1,r_2,r_3)\in \Z^3\mid 1\le r_1\le 3,\ 1\le r_2\le 2, r_3=1\}$ 
which contains $6$ elements. From Proposition~\ref{prop:face of GZ} we obtain the formulas 
\begin{equation} \label{eq:volF}
\begin{split}
\vol(F(1,1,1))&=\alpha^{(3,0,0)}+\alpha^{(2,1,0)}+\alpha^{(2,0,1)}+\alpha^{(1,2,0)}+\alpha^{(1,1,1)}\\
\vol(F(2,1,1))&=\alpha^{(2,1,0)}+2\alpha^{(1,2,0)}+\alpha^{(1,1,1)}+2\alpha^{(0,3,0)}+\alpha^{(0,2,1)}\\
\vol(F(1,2,1))&=\alpha^{(2,0,1)}+\alpha^{(1,1,1)}+\alpha^{(1,0,2)}\\
\vol(F(3,1,1))&=\alpha^{(2,0,1)}+\alpha^{(1,1,1)}+\alpha^{(1,0,2)}\\
\vol(F(2,2,1))&=\alpha^{(1,2,0)}+\alpha^{(1,1,1)}+2\alpha^{(0,3,0)}+2\alpha^{(0,2,1)}+\alpha^{(0,1,2)}\\
\vol(F(3,2,1))&=\alpha^{(1,1,1)}+\alpha^{(0,2,1)}+\alpha^{(1,0,2)}+\alpha^{(0,1,2)}+\alpha^{(0,0,3)}
\end{split}
\end{equation}
which we can tabulate as follows. The columns correspond to elements of $\mathcal{F}_4$ notated for simplicity as $r_1 r_2 r_3$ instead of $(r_1, r_2, r_3)$. The entries of the tables are coefficients of the corresponding $\alpha^{(r_1, r_2, r_3)}$, where we leave an entry blank if the coefficient is $0$.

{\tiny
\begin{table}[htb]
\begin{tabular}{|c||c|c|c|c|c|c|c|c|c|c|} \hline
face $\backslash$ exponent & $300$ & $210$ & $201$ & $120$ & $111$ & $030$ & $021$ & $102$ & $012$ & $003$  \cr
\hline
\hline $F(1,1,1)$ & $1$ & $1$ & $1$ & $1$ & $1$ &       &       &       &      &            \cr
\hline $F(2,1,1)$ &       & $1$ &       & $2$ & $1$ & $2$ & $1$ &       &      &            \cr
\hline $F(1,2,1)$ &       &       & $1$ &       & $1$ &       &      & $1$ &      &            \cr
\hline $F(3,1,1)$ &       &       & $1$ &       & $1$ &       &      & $1$ &      &            \cr
\hline $F(2,2,1)$ &       &       &       & $1$ & $1$ & $2$ & $2$ &      & $1$ &            \cr
\hline $F(3,2,1)$ &       &       &       &       & $1$ &      & $1$ & $1$ & $1$ & $1$      \cr 
\hline
\end{tabular} \vspace{1.25ex}
\caption{}\label{table1}
\label{table1}
\end{table}
}

We can go further. Using Proposition~\ref{prop:x-relation} we can write $\vol(F(r_1, r_2, r_3))$ in terms of $\vol_\lambda(X^w)$ as follows: 
\begin{equation}
\begin{split}
\vol(F(1,1,1)) &=\vol_\lambda(X^{1432}),\quad \vol(F(2,1,1))=\vol_\lambda(X^{2341})+\vol_\lambda(X^{3142}),\\
\vol(F(1,2,1)) & =\vol_\lambda(X^{2413})=\vol(F(3,1,1)),\\
\vol(F(2,2,1)) &=\vol_\lambda(X^{4123})+\vol_\lambda(X^{3142}),\quad  \\
\vol(F(3,2,1)) & =\vol_\lambda(X^{3214}).
\end{split} 
\end{equation}
Combining this with~\eqref{eq:volF} we get expressions for $\vol_\lambda(X^w)$ with $\ell(w) = 3$, tabulated following the conventions for Table~\ref{table1} above. We have labelled the rows, each of which correspond to a Schubert variety $X^w$, using the one-line notation of $w$. In the last row we record the volume $\vol(\Perm(\lambda))$ of the permutohedron. 

{\tiny
\begin{table}[htb]
\begin{tabular}{|c||c|c|c|c|c|c|c|c|c|c|} \hline
 permutation $\backslash$ exponent & $300$ & $210$ & $201$ & $120$ & $111$ & $030$ & $021$ & $102$ & $012$ & $003$  \cr
\hline
\hline $1432$ & $1$ & $1$ & $1$ & $1$ & $1$  &       &      &        &       &           \cr
\hline $2341$ &       & $1$ &       & $1$ &       &       &       &       &       &            \cr
\hline $2413$ &       &       & $1$ &       & $1$ &       &       & $1$ &       &            \cr
\hline $3142$ &       &       &       & $1$ & $1$ & $2$ & $1$ &       &       &            \cr
\hline $3214$ &       &       &       &       & $1$ &       & $1$ & $1$ & $1$ & $1$      \cr
\hline $4123$ &       &       &       &       &       &      &  $1$ &       & $1$ &           \cr 
\hline$\Perm(\lambda)$ & $1$ & $2$ & $3$ & $4$ & $6$ & $4$ & $4$ & $3$ & $2$ & $1$ \cr
\hline
\end{tabular} \vspace{1.25ex}
\caption{}
\label{table2}
\end{table}
}

From Table~\ref{table2} we obtain 
\[
\begin{split}
\vol(\Perm(\lambda))=&\vol_\lambda([X^{1432}])+\vol_\lambda([X^{2341}])+2\vol_\lambda([X^{2413}])\\
&+2\vol_\lambda([X^{3142}])+\vol_\lambda([X^{3214}])+\vol_\lambda([X^{4123}])
\end{split}
\]
which corresponds to the computation in Example~\ref{exam:n=3,4Permutohedral variety}~(b). 

\end{example}

Finally, we show that the permutohedron $\Perm(\lambda)$ can be decomposed into combinatorial $(n-1)$-cubes.
We have the following.

\begin{theorem} \label{prop:perm-cubes}
The permutohedron $\Perm(\lambda)$ decomposes into $(n-1)!$ many subpolytopes $\Phi(F(\mathbf{r}))$, $\mathbf{r} \in \mathcal{F}_n$, where the vertices of the subpolytopes are all vertices of $\Perm(\lambda)$ (i.e. no extra vertices are added). Moreover, each subpolytope $\Phi(F(\mathbf{r}))$ is combinatorially an $(n-1)$-cube. 
\end{theorem}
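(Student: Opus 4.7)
The approach is to reduce the theorem to a statement about $F(\mathbf{r})$ itself. Proposition~\ref{prop: volume-preserving} already shows that $\Phi$ is a bijection on each $F(\mathbf{r})$, and its proof exhibits $\Phi|_{F(\mathbf{r})}$ as the restriction of a linear map; in particular $\Phi(F(\mathbf{r}))$ is affinely equivalent to $F(\mathbf{r})$, so its combinatorial type is transported. The count $|\mathcal{F}_n| = \prod_{j=1}^{n-1}(n-j) = (n-1)!$ is immediate from~\eqref{eq: def calFn}. It therefore remains to show (a) that each $F(\mathbf{r})$ is combinatorially an $(n-1)$-cube and (b) that each vertex of $F(\mathbf{r})$ is a simple vertex of $\GZ(\lambda)$, so that its $\Phi$-image is a vertex of $\Perm(\lambda)$ by the correspondence recalled at the start of Section~\ref{sec:decomposing}.

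The key technical step is to introduce the natural free coordinates $z_j := x_{r_j,\,r_j+j}$ for $j=1,\dots,n-1$ on $F(\mathbf{r})$. An induction on $j$ using the defining equations~\eqref{eq: equations for Fr} shows that the list of entries along the $j$-th superdiagonal of $F(\mathbf{r})$ is obtained from that of the $(j-1)$-th superdiagonal by removing the entries at positions $r_j$ and $r_j+1$ and inserting $z_j$ at position $r_j$. Consequently every $x_{i,i+j}$ in $F(\mathbf{r})$ is equal to exactly one symbol from $\{\lambda_1,\dots,\lambda_n,z_1,\dots,z_{n-1}\}$, and the only Gelfand--Zetlin inequalities that do not become automatic reduce to the bounds $\beta_j \leq z_j \leq \alpha_j$, where $\alpha_j$ and $\beta_j$ are the (distinct) symbols occupying positions $r_j$ and $r_j+1$ of the $(j-1)$-th superdiagonal; the inequality $\alpha_j \geq \beta_j$ holds because GZ values weakly decrease along each superdiagonal.

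With this parametrization in hand, the cube structure in (a) follows from an induction on $d$: a polytope $\Delta_d \subset \R^d$ cut out by $\beta_j(\mathbf{z}_{<j}) \leq z_j \leq \alpha_j(\mathbf{z}_{<j})$ with $\alpha_j$ and $\beta_j$ distinct (as symbols drawn from earlier $z_i$'s and from constants) is combinatorially a $d$-cube, because projection to the first $d-1$ coordinates sends $\Delta_d$ onto $\Delta_{d-1}$ (inductively a $(d-1)$-cube) with nonempty interval fibers, so $\Delta_d$ has exactly $2d$ facets (the two facets $\{z_d=\alpha_d\}$ and $\{z_d=\beta_d\}$, each combinatorially $\Delta_{d-1}$, together with the $2(d-1)$ facets lifted from $\Delta_{d-1}$), yielding a Boolean face lattice. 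For (b), each vertex of $F(\mathbf{r})$ corresponds to a choice $\epsilon \in \{0,1\}^{n-1}$ declaring whether $z_j=\alpha_j$ or $z_j=\beta_j$; translating these choices back into the $x$-variables gives the additional GZ boundary equations $x_{r_j,r_j+j}=x_{r_j,r_j+j-1}$ or $x_{r_j,r_j+j}=x_{r_j+1,r_j+j}$, and one checks directly that combined with the $(n-1)(n-2)/2$ equations defining $F(\mathbf{r})$ these form precisely the $d$-sequence system of Section~\ref{sec:decomposing} for $d=(r_1+1-\epsilon_1,\dots,r_{n-1}+1-\epsilon_{n-1},1)$; hence the vertex is the simple vertex $p_{w_d}$, and $\Phi$ carries it to the vertex $(\lambda_{w_d^{-1}(1)},\dots,\lambda_{w_d^{-1}(n)})$ of $\Perm(\lambda)$.

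The main obstacle is the careful bookkeeping in the inductive cube argument, in particular verifying that the $2(n-1)$ inequalities in the triangular system really define $2(n-1)$ distinct facets of $F(\mathbf{r})$ and that distinct $\epsilon \in \{0,1\}^{n-1}$ yield distinct vertices $p_{w_d}$. Both points ultimately reduce to the symbolic distinctness of $\alpha_j$ and $\beta_j$ at each stage together with the strictness of $\lambda_1 > \cdots > \lambda_n$, but this nondegeneracy needs to be threaded through the induction.
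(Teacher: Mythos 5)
Your overall strategy is sound and the reduction steps (the count $|\mathcal{F}_n|=(n-1)!$, the use of Proposition~\ref{prop: volume-preserving} and the affine-linearity of $\Phi$ to transport combinatorial type, and the identification of vertices of $F(\mathbf{r})$ with simple vertices $v_d$ of $\GZ(\lambda)$) match the paper's. But for the heart of the matter --- why $F(\mathbf{r})$ is combinatorially an $(n-1)$-cube --- you take a genuinely different route. The paper first shows $F(\mathbf{r})$ is a \emph{simple} polytope with $2(n-1)$ facets (its vertices are simple vertices of $\GZ(\lambda)$), checks that every $2$-face is a quadrilateral, and then invokes the cube-recognition criterion of \cite[Lemma 4.6]{MP} and \cite[Exercise 0.1]{Zieg}. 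You instead exhibit $F(\mathbf{r})$ explicitly as an iterated interval bundle: the free coordinates $z_j=x_{r_j,r_j+j}$ satisfy a triangular system $\beta_j\le z_j\le\alpha_j$ with $\alpha_j,\beta_j$ adjacent entries of the $(j-1)$-st superdiagonal list, and a projection/prism induction gives the Boolean face lattice. Your route is more self-contained (no external cube criterion) and yields the explicit vertex parametrization by $\epsilon\in\{0,1\}^{n-1}$ as a byproduct; the paper's route avoids the bookkeeping of the triangular system but outsources the combinatorial recognition step.

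One point in your argument needs to be sharpened. You say the nondegeneracy required by the prism induction ``ultimately reduces to the symbolic distinctness of $\alpha_j$ and $\beta_j$.'' That is not sufficient: what the induction actually needs is that the fiber $[\beta_j(\mathbf{z}_{<j}),\alpha_j(\mathbf{z}_{<j})]$ is nondegenerate at \emph{every} point of $\Delta_{j-1}$, including its boundary; if $\alpha_j=\beta_j$ were attained on some face of $\Delta_{j-1}$, the facets $\{z_j=\alpha_j\}$ and $\{z_j=\beta_j\}$ would intersect and the polytope would not be a cube. Symbolically distinct bounds do not rule this out in general (the system $\lambda_2\le z_1\le\lambda_1$, $z_1\le z_2\le\lambda_1$ has symbolically distinct bounds but cuts out a triangle). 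The correct invariant to carry through your induction is that each superdiagonal list $L_k$ is \emph{strictly} decreasing at every point of $F(\mathbf{r})$: this holds for $L_0=(\lambda_1>\cdots>\lambda_n)$, and is preserved when the adjacent pair $(\alpha_k,\beta_k)$ is replaced by $z_k\in[\beta_k,\alpha_k]$, since the left neighbor of the pair is strictly greater than $\alpha_k\ge z_k$ and the right neighbor is strictly less than $\beta_k\le z_k$. With that invariant in place, $\alpha_j>\beta_j$ holds on all of $F(\mathbf{r})$, the fibers never degenerate, and your prism induction and the distinctness of the $2(n-1)$ facets and of the $2^{n-1}$ vertices all go through.
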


\begin{proof} 
From Proposition~\ref{prop: volume-preserving} it follows that $\Perm(\lambda)$ decomposes into the $(n-1)!$ many images $\Phi(F(\mathbf{r}))$. Moreover, the discussion before Lemma~\ref{lemma: move b} shows that the vertices of $\Phi(F(\mathbf{r}))$ are vertices of $\Perm(\lambda)$, so no new vertices are added. 
Thus the only remaining claim that needs to be shown is the following: for any $\mathbf{r} \in \mathcal{F}_n$, the face $F(\mathbf{r})$ corresponding to $\mathbf{r}$ is combinatorially an $(n-1)$-cube. 
To see this, observe that from the definition of $F(\mathbf{r})$ (and from the description of the faces of $\GZ(\lambda)$ above) it follows that the facets of $F(\mathbf{r})$ are given by a constraint of the form 
\begin{equation} \label{eq:cube_facet}
\textup{ either } \text{$x_{r_j,j+r_j}=x_{r_j,j+r_j-1}$} \textup{  or  } \textup{$x_{r_j,j+r_j}=x_{r_j+1,j+r_j}$}
\end{equation}
for each $1 \leq j \leq n-1$. Therefore there are $2(n-1)$ many facets of $F(\mathbf{r})$. Moreover, the vertices of $F(\mathbf{r})$, being intersections of facets, must be simple vertices of $\GZ(\lambda)$ of the form described at the beginning of the section. Then each vertex of $F(\mathbf{r})$, being a simple vertex in $\GZ(\lambda)$, is also a simple vertex in the face $F(\mathbf{r})$ of $\GZ(\lambda)$. Since this holds for all vertices of $F(\mathbf{r})$, we conclude $F(\mathbf{r})$ is a simple polytope. 

Observe that any $2$-face of $F(\mathbf{r})$ is an intersection of $(n-3)$ many facets of $F(\mathbf{r})$; moreover, it is not hard to see that every $2$-face contains $4$ vertices. For a simple polytope of $\geq 3$ dimensions, it is known from \cite[Lemma 4.6]{MP} and \cite[Exercise 0.1]{Zieg} that if every $2$-face is a $4$-gon, then the polytope is combinatorially a cube. If the polytope is $2$-dimensional, the claim is obvious. This concludes the proof. 
\end{proof}

\section{The permutohedral variety and Richardson varieties}\label{section: Richardson}

As we saw in Theorem~\ref{prop:perm-cubes}, the permutohedron $\Perm(\lambda)$ decomposes into $(n-1)!$ many combinatorial $(n-1)$-cubes $F(\mathbf{r})$, $\mathbf{r} \in \mathcal{F}_n$. On the other hand, it is well-known that $\Perm(\lambda)$ is also the moment map image of the permutohedral variety $\Hess(S,h_1)$ where $h_1 = (2,3,4,\ldots,n,n)$, with respect to the standard maximal torus action on $\Fl(\C^n)$ (restricted to $\Hess(S,h_1)$). In this section we show that the combinatorial results obtained in Section~\ref{sec:decomposing}, namely, the decomposition of $\Perm(\lambda)$ into combinatorial $(n-1)$-cubes, has a geometric interpretation in terms of $\Hess(S,h_1)$.
Specifically, we consider a Richardson variety $X(\mathbf{r})$ corresponding to each $F(\mathbf{r})$ and show that the cohomology class $[\Hess(S,h_1)]$ in $H^*(\Fl(\C^n))$, is a sum of the $(n-1)!$ many cohomology classes $[X(\mathbf{r})]$ (Theorem~\ref{theorem: Hess vol as sum F}).

We begin by recalling the key fact needed for this argument, namely, the result~\eqref{eq: AT formula} of Anderson and Tymoczko. Taking $h=h_1$ in their result, we obtain 
\begin{equation} \label{eq:AT1}
[\Hess(S,h_1)]=\sum_{\substack{u,v \in \mathfrak{S}_n \\ v^{-1}u=w_{h_1} \\ \ell(u)+\ell(v)=\ell(w_{h_1})}}[X^u][X^{w_0vw_0}].
\end{equation}
To further our study of this equation we first analyze the pairs $(u,v)$ appearing in the summation on the RHS of~\eqref{eq:AT1}. First recall from Example~\ref{exam:n=3,4Permutohedral variety} that $w_h=[ n-1, n-2, \ldots, 2, 1, n]$. In particular, $w_{h_1}^{-1} = w_{h_1}$, so $v^{-1} u = w_{h_1}$ if and only if $v=uw_{h_1}$. 

\begin{lemma}\label{lem:h_1}
Let $u \in \mathfrak{S}_n$. Then
$$
\ell(u)+\ell(uw_{h_1})=\ell(w_{h_1})\Longleftrightarrow u(n)=n.
$$
In particular, there are exactly $(n-1)!$ many pairs $(u,v)$ satisfying the conditions in the summation on the RHS of~\eqref{eq:AT1}. 
\end{lemma}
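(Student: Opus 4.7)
The plan is to prove this by a direct inversion count. Recall that $\ell(\sigma)$ equals the number of inversions of $\sigma$, i.e., the number of pairs $(i,j)$ with $i<j$ and $\sigma(i)>\sigma(j)$. Since $w_{h_1} = [n-1, n-2, \ldots, 2, 1, n]$ fixes $n$ and reverses $\{1,\ldots,n-1\}$, a direct check gives $\ell(w_{h_1}) = \binom{n-1}{2}$. Moreover, letting $v := uw_{h_1}$, one computes $v(i) = u(n-i)$ for $1 \leq i \leq n-1$ and $v(n) = u(n)$, which is the basic identity on which everything rests.

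I would then split the inversions of both $u$ and $v$ into two types: those lying entirely among positions in $\{1,\ldots,n-1\}$ (``Type A''), and those involving position $n$ (``Type B''). For Type A in $u$, let $a$ denote the number of inversions. For $v$, using $v(i) = u(n-i)$ and the substitution $(i',j') = (n-j, n-i)$, a Type A inversion of $v$ corresponds bijectively to a \emph{non}-inversion of $u$ among positions $<n$; hence there are $\binom{n-1}{2} - a$ such. For Type B, let $b$ denote the number of $i < n$ with $u(i) > u(n)$. Because $v(n) = u(n)$ and $\{v(1),\ldots,v(n-1)\} = \{u(1),\ldots,u(n-1)\}$ as sets, the Type B count of $v$ is also exactly $b$. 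Putting these together:
\[
\ell(u) + \ell(uw_{h_1}) = (a + b) + \bigl(\tbinom{n-1}{2} - a + b\bigr) = \ell(w_{h_1}) + 2b.
\]

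From this identity the equivalence is immediate: $\ell(u) + \ell(uw_{h_1}) = \ell(w_{h_1})$ if and only if $b = 0$, which holds if and only if no element of $\{u(1),\ldots,u(n-1)\}$ exceeds $u(n)$, equivalently $u(n)=n$. For the final count, permutations $u \in \mathfrak{S}_n$ with $u(n)=n$ are naturally identified with $\mathfrak{S}_{n-1}$, and since $v$ is determined by $u$ via $v = uw_{h_1}$, there are exactly $(n-1)!$ such pairs $(u,v)$.

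The only mild obstacle here is setting up the bookkeeping between inversions of $u$ and inversions of $v = uw_{h_1}$ cleanly; the bijection between Type~A inversions of $v$ and Type~A non-inversions of $u$ (via the index substitution $i \mapsto n-i$) is the one place where care is needed to avoid off-by-one errors, but once that is in hand the formula $\ell(u)+\ell(uw_{h_1}) = \ell(w_{h_1}) + 2b$ does all the work.
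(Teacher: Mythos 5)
Your proof is correct and takes essentially the same approach as the paper: a direct inversion count exploiting the fact that $uw_{h_1}$ reverses the first $n-1$ entries of $u$, so that inversions among positions $1,\dots,n-1$ complement each other. Your exact identity $\ell(u)+\ell(uw_{h_1})=\ell(w_{h_1})+2b$ is a slightly cleaner packaging that handles both directions at once, where the paper argues the two implications separately (computing $\sum_{i}(u(i)-1)$ in one direction and a strict inequality in the other).
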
 

\begin{proof} 
Since $w_{h_1}=[ n-1, n-2, \ldots, 2, 1, n]$, if $u=[u(1),u(2),\dots, u(n-1),u(n)]$ then 
$uw_{h_1}=[u(n-1),\dots,u(2),u(1),u(n)]$. 

First suppose $u(n)=n$. We need to show $\ell(u)+\ell(uw_{h_1})=\ell(w_{h_1})$. For any $i \in \Z$, $1 \leq i \leq n-1$, the number of inversions in $u$ with $u(i)$ as the larger element is the number of elements in $\{u(i+1), \ldots, u(n-1)\}$ which are smaller than $u(i)$. Since $w_{h_1}$ inverts the first $n-1$ entries of $u$, the number of inversions in $uw_{h_1}$ with $u(i)$ as the larger entry is the number of elements in $\{u(i-1), \ldots, u(2),u(1)\}$ which are smaller than $u(i)$. Hence, the sum of the number of inversions in $u$ and $uw_{h_1}$ with $u(i)$ as the larger entry is exactly $u(i)-1$. By assumption $\{u(1),\ldots, u(n-1)\} = \{1,2,\ldots,n-1\}$, so 
\begin{equation} \label{eq:4-4}
\ell(u)+\ell(uw_{h_1})=\sum_{i=1}^{n-1}(u(i)-1)=(n-1)(n-2)/2=\ell(w_{h_1}).
\end{equation}

Now suppose 
$\ell(u)+\ell(uw_{h_1})=\ell(w_{h_1})$. We need to show $u(n)=n$, so suppose for a contradiction that $u(n) \neq n$, i.e., $u(n) < n$. Reasoning similar to the above yields that if $u(i) < u(n)$ then the sum of the inversions in $u$ and in $uw_h$ with $u(i)$ as the larger entry is $u(i)-1$, and if $u(i)>u(n)$, then it is $u(i)$. Since $u(n)<n$, we additionally know that there does exist an $i$, $1 \leq i \leq n-1$, with $u(i) > u(n)$. Hence by a computation similar to~\eqref{eq:4-4} we conclude 
\[
\ell(u) + \ell(u w_{h_1}) > \frac{(n-1)(n-2)}{2} = \ell(w_{h_1})
\]
which is a contradiction. Hence $u(n)=n$ as desired. 
\end{proof} 

\begin{remark} 
In fact, the proof of Lemma~\ref{lem:h_1} can be straightforwardly generalized to the case when the Hessenberg function is in ``$k$-banded form'', i.e., 
\[
h_k=(k+1,k+2,\dots,n,\dots,n)
\]
for some $k$. From this we obtain: 
$$\ell(u)+\ell(uw_{h_k})=\ell(w_{h_k})\Longleftrightarrow u(i)=i 
$$
for $n-k+1\le i\le n$. 
\end{remark}

Let $\mathbf{r} = (r_1, \ldots, r_{n-1}) \in \mathcal{F}_n$. Following the methods outlined in Section~\ref{sec:decomposing}, we can identify the vertices of $F(\mathbf{r})$ with a subset of $\mathfrak{S}_n$, and the Bruhat-maximal element $\mathbf{r}_{max}$ in the image of $F(\mathbf{r})$ is the one which specifies (in addition to the equations already given in~\eqref{eq: equations for Fr}) the equations 
\[
x_{r_j, j+r_j} = x_{r_j, j+r_j-1} \quad \textup{ for } 1 \leq j \leq n-1.
\]
Correspondingly, the Bruhat-minimal element $\mathbf{r}_{min}$ is the one specifying 
\[
x_{r_j, j+r_j} = x_{r_j+1, j+r_j} \quad \textup{ for } 1 \leq j \leq n-1.
\]

\begin{lemma}\label{lemma: rmin and rmax formula} 
We have $\mathbf{r}_{min}(n)=n$ and $\mathbf{r}_{max} = \mathbf{r}_{min}(s_{n-1} s_{n-2} \cdots s_1)$. In terms of the one-line notation 
\begin{equation}\label{eq: rmin and rmax}
\mathbf{r}_{min} = [ a_1, \cdots, a_{n-1}, n] \quad \textup{ and } \quad \mathbf{r}_{max} = [n, a_1, \cdots, a_{n-1}]
\end{equation} 
where $\{a_1, \ldots, a_{n-1}\} = [n-1]$ as sets. Moreover, the association $\mathcal{F}_n \to \mathfrak{S}_{n-1}$ given by $\mathbf{r} \mapsto \mathbf{r}_{min}$ is a bijective correspondence. 
\end{lemma}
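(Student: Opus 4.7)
The plan is to work directly with the face diagrams of the two simple vertices $\mathbf{r}_{min}$ and $\mathbf{r}_{max}$ and apply the path-counting description of simple vertices recalled at the start of Section~\ref{sec:decomposing}. The first step, essentially a bookkeeping exercise, is to translate the equations defining each vertex into edge-existence criteria at an arbitrary position $(i,c)$: the horizontal edge connecting $(i, c-1)$ and $(i,c)$ appears in the face diagram of $\mathbf{r}_{min}$ iff $r_{c-i} > i$, and the vertical edge connecting $(i,c)$ and $(i+1,c)$ appears iff $r_{c-i} \leq i$; for $\mathbf{r}_{max}$ one sharpens these inequalities to $r_{c-i} \geq i$ and $r_{c-i} < i$ respectively. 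From these criteria I would first deduce $\mathbf{r}_{min}(n) = n$ by showing that the path through $\lambda_n = (n,n)$ is exactly the last column: the bound $r_{n-i} \leq n-(n-i) = i$ is automatic from the definition of $\mathcal{F}_n$, so every vertical edge $(i,n)$-$(i+1,n)$ is present and every horizontal edge $(i,n-1)$-$(i,n)$ is absent. A completely symmetric argument, using $r_{c-1} \geq 1$ always and $r_{c-1} < 1$ never, shows that the path through $\lambda_1$ in $\mathbf{r}_{max}$'s face diagram is exactly the first row, giving $\mathbf{r}_{max}(1) = n$.

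The main identity $\mathbf{r}_{max}(k) = \mathbf{r}_{min}(k-1)$ for $k = 2, \ldots, n$ is the heart of the argument. I would establish it by introducing the shift
\[
\phi : \{(i,j) \mid 2 \leq i \leq j \leq n\} \longrightarrow \{(i,j) \mid 1 \leq i \leq j \leq n-1\}, \qquad \phi(i,j) = (i-1, j-1),
\]
and verifying that it is an edge-preserving bijection between the face diagram of $\mathbf{r}_{max}$ restricted to rows $\geq 2$ and the face diagram of $\mathbf{r}_{min}$ restricted to columns $\leq n-1$. This is immediate from the edge criteria: the condition $r_{c-i} \geq i$ for a horizontal edge in $\mathbf{r}_{max}$ transforms under $\phi$ into $r_{(c-1)-(i-1)} > i-1$, which is identically the corresponding $\mathbf{r}_{min}$ criterion, and similarly for vertical edges. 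Because the first row of $\mathbf{r}_{max}$ and the last column of $\mathbf{r}_{min}$ are each isolated from the rest (by the first step), the path through $\lambda_k$ for $k \geq 2$ in $\mathbf{r}_{max}$ is confined to rows $\geq 2$, the path through $\lambda_{k-1}$ in $\mathbf{r}_{min}$ is confined to columns $\leq n-1$, and $\phi$ takes one to the other, producing the claimed equality of path lengths.

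Finally, observing that $s_{n-1} s_{n-2} \cdots s_1 = [n, 1, 2, \ldots, n-1]$ in one-line notation, and that right-multiplication by this element sends $[a_1, \ldots, a_{n-1}, n]$ to $[n, a_1, \ldots, a_{n-1}]$, yields simultaneously the identity $\mathbf{r}_{max} = \mathbf{r}_{min}(s_{n-1} \cdots s_1)$ and the one-line notation \eqref{eq: rmin and rmax}. For the bijectivity of the assignment $\mathbf{r} \mapsto \mathbf{r}_{min}$ from $\mathcal{F}_n$ to $\mathfrak{S}_{n-1}$, the two sets have the same cardinality $(n-1)!$, so it suffices to show injectivity; but this is clear, since from $\mathbf{r}_{min}$ one recovers $r_j$ as the smallest $i$ such that the vertical edge $(i, i+j)$-$(i+1, i+j)$ appears in the face diagram of $\mathbf{r}_{min}$. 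The main obstacle throughout is simply keeping the row/column/diagonal indexing and horizontal/vertical edge dictionary straight; no deeper conceptual difficulty arises.
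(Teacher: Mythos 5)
Your proof is correct, but it takes a genuinely different route from the paper's. The paper proves the lemma by induction on $n$: it strips off the first off-diagonal (replacing $\mathbf{r}=(r_1,\dots,r_{n-1})$ by $\mathbf{r}'=(r_2,\dots,r_{n-1})\in\mathcal{F}_{n-1}$), applies the inductive hypothesis to get the one-line notations of $\mathbf{r}'_{min}$ and $\mathbf{r}'_{max}$, and then reads off how inserting the value determined by $r_1=k$ shifts the entries; surjectivity of $\mathbf{r}\mapsto\mathbf{r}_{min}$ then also falls out of this explicit construction. You instead argue directly and non-inductively: after translating the defining equations into per-position edge-existence criteria ($r_{c-i}>i$ versus $r_{c-i}\geq i$, etc.), you isolate the last column of the $\mathbf{r}_{min}$ diagram and the first row of the $\mathbf{r}_{max}$ diagram to get $\mathbf{r}_{min}(n)=\mathbf{r}_{max}(1)=n$, and then exhibit the shift $\phi(i,j)=(i-1,j-1)$ as a graph isomorphism between the two remaining diagrams, which yields $\mathbf{r}_{max}(k)=\mathbf{r}_{min}(k-1)$ and hence the cyclic-shift relation $\mathbf{r}_{max}=\mathbf{r}_{min}(s_{n-1}\cdots s_1)$; I checked the index bookkeeping and it is consistent with the conventions in~\eqref{eq: perm eq} and~\eqref{eq: equations for Fr}. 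Your approach buys a more structural explanation of \emph{why} $\mathbf{r}_{max}$ is the cyclic shift of $\mathbf{r}_{min}$ (it is literally a translate of the same face diagram), at the cost of not producing the explicit entries $a_i$; the paper's induction produces those entries but leaves the shift relation as an observation. One small point on your injectivity step: recovering $r_j$ as the smallest $i$ with a vertical edge on the $j$-th diagonal recovers $\mathbf{r}$ from the \emph{face diagram} of $\mathbf{r}_{min}$, not from the permutation itself, so you are implicitly invoking the injectivity of the correspondence $v_d\mapsto w_d$ between simple vertices of $\GZ(\lambda)$ and $\mathfrak{S}_n$; since the paper recalls this bijection from \cite{Ki2010} at the start of Section~\ref{sec:decomposing}, this is legitimate, but it is worth saying explicitly.
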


\begin{proof} 
We induct on $n$. For the base case $n=2$, the claim is obvious.  Now suppose by induction that the claim is true for $n-1$. Define an element $\mathbf{r}' \in \mathcal{F}_{n-1}$ by setting $\mathbf{r}' = (r_2, r_3, \ldots, r_{n-1})$, or in other words, $r'_i := r_{i+1}$ for $1 \leq i \leq n-2$. (Visually, this corresponds to ``deleting the vertices in the bottom diagonal'' in the dot diagram corresponding to $\mathbf{r}$, deleting any edges adjacent to those vertices, and then interpreting what remains as the diagram for $\mathbf{r}'$.) 
By induction we know that 
\[
\mathbf{r}'_{min} = [a_1, \ldots, a_{n-2}, n-1], \quad \mathbf{r}'_{max} = [n-1, a_1, \ldots, a_{n-2}]
\]
where $\{a_1, \ldots, a_{n-2}\} = \{1,2,\ldots,n-2\} = [n-2]$. From the definition of $\mathbf{r}_{min}$ and $\mathbf{r}_{max}$ given above and the correspondence between face diagrams and permutations as explained at the beginning of Section~\ref{sec:decomposing} it now follows that, using the notation $r_1 =k$, we have 
\begin{equation*}
\begin{split} 
\mathbf{r}_{min} &= [a_1+1, \ldots, a_{k-1}+1, 1, a_{k}+1, \ldots, a_{n-2}+1, n], \\
\mathbf{r}_{max} &= [n, a_1+1, \ldots, a_{k-1}+1, 1, a_{k}+1, \ldots, a_{n-2}+1],
\end{split} 
\end{equation*}
where the $1$ in the one-line notation of $\mathbf{r}_{max}$ is in the $k+1$-st spot. In particular,~\eqref{eq: rmin and rmax} holds.

Finally, since we have seen that $\mathbf{r}_{min}(n)=n$ we may view it as an element in $\mathfrak{S}_{n-1}$. Moreover, from the construction of the elements $\mathbf{r}_{min}$ it follows that any element in $\mathfrak{S}_{n-1}$ can be realized as $\mathbf{r}_{min}$ for some $\mathbf{r}$. From this we conclude that the association $\mathbf{r} \mapsto \mathbf{r}_{min}$ is a bijection between $\mathcal{F}_n$ and $\mathfrak{S}_{n-1}$.  This completes the proof. 
\end{proof}

Also note that since $\mathbf{r}_{min} < \mathbf{r}_{max}$ in Bruhat order, it makes sense to define the corresponding \textbf{Richardson variety} 
\begin{equation}\label{eq: richardson for rmin rmax} 
X(\mathbf{r}) := X^{\mathbf{r}_{min}} \cap X_{\mathbf{r}_{max}}.
\end{equation}
These varieties play a role in our description of the cohomology class corresponding to the permutohedral variety, as below.

\begin{theorem}\label{theorem: Hess vol as sum F}
Let $\Hess(S, h_1)$ be the permutohedral variety in $\mathrm{Flag}(\C^n)$. Then, considered as a cohomology class in $H^*(\mathrm{Flag}(\C^n))$, we have 
\begin{equation}\label{eq: Hess vol as sum F}
[\Hess(S,h_1)]=\sum_{\mathbf{r}\in\mathcal{F}_n}[X(\mathbf{r})].
\end{equation}
\end{theorem}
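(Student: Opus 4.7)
The plan is to match the Anderson--Tymoczko sum in~\eqref{eq:AT1} term-by-term with the sum over $\mathcal{F}_n$ on the right-hand side of~\eqref{eq: Hess vol as sum F}, using the bijection $\mathbf{r}\mapsto \mathbf{r}_{min}$ from Lemma~\ref{lemma: rmin and rmax formula} to parametrize both sides by $\mathfrak{S}_{n-1}\subset\mathfrak{S}_n$.

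First I would apply Lemma~\ref{lem:h_1} to rewrite~\eqref{eq:AT1} as a sum over those $u\in\mathfrak{S}_n$ with $u(n)=n$, with $v=uw_{h_1}$, yielding
\[
[\Hess(S,h_1)]=\sum_{u\in\mathfrak{S}_n,\ u(n)=n}[X^u]\,[X^{w_0\,uw_{h_1}\,w_0}].
\]
Next, using the identity $[X^w]=[X_{w_0w}]$ recorded in Section~\ref{sec: background}, I would rewrite $[X^{w_0\,uw_{h_1}\,w_0}]=[X_{uw_{h_1}w_0}]$. A short direct computation (already essentially performed after Lemma~\ref{lemma: rmin and rmax formula}) shows $w_{h_1}w_0=[n,1,2,\dots,n-1]=s_{n-1}s_{n-2}\cdots s_1$, so the class above equals $[X_{u\cdot(s_{n-1}s_{n-2}\cdots s_1)}]$.

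I would then invoke Lemma~\ref{lemma: rmin and rmax formula}: the map $u\mapsto \mathbf{r}$ with $\mathbf{r}_{min}=u$ is a bijection $\{u\in\mathfrak{S}_n:u(n)=n\}\to\mathcal{F}_n$, and $\mathbf{r}_{max}=\mathbf{r}_{min}\cdot(s_{n-1}\cdots s_1)$. Substituting, each summand becomes $[X^{\mathbf{r}_{min}}]\,[X_{\mathbf{r}_{max}}]$. At this point I would invoke the standard fact that, since $\mathbf{r}_{min}\le \mathbf{r}_{max}$ in Bruhat order (which follows because $\ell(\mathbf{r}_{max})=\ell(\mathbf{r}_{min})+(n-1)$ by inserting $n$ into the first position, agreeing with $\ell(s_{n-1}\cdots s_1)=n-1$, so the product $\mathbf{r}_{min}(s_{n-1}\cdots s_1)$ is already reduced), the Schubert varieties $X^{\mathbf{r}_{min}}$ and $X_{\mathbf{r}_{max}}$ meet properly and the Richardson intersection theorem gives
\[
[X^{\mathbf{r}_{min}}]\,[X_{\mathbf{r}_{max}}]=[X^{\mathbf{r}_{min}}\cap X_{\mathbf{r}_{max}}]=[X(\mathbf{r})]
\]
in $H^\ast(\Fl(\C^n))$. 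Summing over $\mathbf{r}\in\mathcal{F}_n$ yields~\eqref{eq: Hess vol as sum F}.

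The main (minor) obstacle is really just bookkeeping: verifying the identity $w_{h_1}w_0=s_{n-1}\cdots s_1$ and confirming that the length condition $\ell(\mathbf{r}_{max})-\ell(\mathbf{r}_{min})=n-1$ matches $\ell(s_{n-1}\cdots s_1)$, which guarantees both that the product in $\mathfrak{S}_n$ is reduced (so $\mathbf{r}_{min}\le\mathbf{r}_{max}$) and that the expected complex dimension of $X(\mathbf{r})$ equals $\dim_{\C}\Hess(S,h_1)=n-1$; this is the minimal consistency check needed for the Richardson product formula to apply. Once these reduced-word verifications are in place, the identification of the two sums is term-by-term and immediate.
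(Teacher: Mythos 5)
Your proposal is correct and follows essentially the same route as the paper's proof: restrict the Anderson--Tymoczko sum via Lemma~\ref{lem:h_1}, convert $[X^{w_0vw_0}]$ to $[X_{vw_0}]$, identify $u=\mathbf{r}_{min}$ and $uw_{h_1}w_0=\mathbf{r}_{max}$ via Lemma~\ref{lemma: rmin and rmax formula}, and apply the standard Richardson fact $[X^{\mathbf{r}_{min}}][X_{\mathbf{r}_{max}}]=[X(\mathbf{r})]$. Your added length-additivity check ($\ell(\mathbf{r}_{max})=\ell(\mathbf{r}_{min})+n-1$, hence $\mathbf{r}_{min}\le\mathbf{r}_{max}$) is a correct and slightly more explicit justification of a step the paper simply cites as well known.
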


\begin{proof} 
Starting from the result of Anderson and Tymoczko given in~\eqref{eq:AT1}, Lemma~\ref{lem:h_1} together with the remark before Lemma~\ref{lem:h_1} imply that 
\[
[ \Hess(S, h_1)] = \sum_{u \in \mathfrak{S}_{n-1}} [X^u] [X^{w_0 uw_{h_1} w_0}].
\]
Recall also that, in general, we have $[X^{w_0 v w_0}] = [X_{v w_0}]$ for any $v \in \mathfrak{S}_n$. Applying this to the case $v=u w_{h_1}$ in the equation above we obtain 
\[
[\Hess(S,h_1)] = \sum_{u \in \mathfrak{S}_{n-1}} [X^u] [X_{ u w_{h_1} w_0}].
\]
Now recall that $w_{h_1}$ is the full inversion in $\mathfrak{S}_{n-1}$, i.e. $w_{h_1} = [n-1, n-2, \ldots, 2, 1, n]$, so 
\[
w_{h_1} w_0 = [ n, 1, 2, \ldots, n-2, n-1] = s_{n-1} s_{n-2} \cdots s_1.
\]
From Lemma~\ref{lemma: rmin and rmax formula} it now follows that 
\begin{equation*}
\begin{split} 
[\Hess(S, h_1)] &=  \sum_{\mathbf{r} \in \mathcal{F}_n}  [X^{\mathbf{r}_{min}}] [X_{\mathbf{r}_{max}}]\\
 & = \sum_{\mathbf{r} \in \mathcal{F}_n} [ X^{\mathbf{r}_{min}} \cap X_{\mathbf{r}_{max}} ] \\
  & = \sum_{\mathbf{r} \in \mathcal{F}_n} [ X(\mathbf{r}) ] \\
  \end{split} 
  \end{equation*} 
 where the second equality is a well-known fact about Schubert varieties (see e.g. \cite{brio}) and the last equality is by the definition of $X(\mathbf{r})$. This completes the proof. 
\end{proof}

\begin{remark} 
It is known that the Richardson variety $X(\mathbf{r})$ defined above is in fact a compact smooth toric variety which is a Bott tower (Bott manifold) \cite{le-ma-pa19}. It is also not hard to see that 
$\vol_\lambda(X(\mathbf{r}))=\vol(F(\mathbf{r}))$.  Indeed, when expanding $[X^{\mathbf{r}_{min}}][X_{\mathbf{r}_{max}}]$ in terms of the formula of Theorem~\ref{theorem: KST}, it is straightforward to see that the faces $F\cap F^*$ appearing on the RHS of the equation include the $F(\mathbf{r})$; thus, it follows that $\vol_\lambda(X(\mathbf{r})) \geq \vol(F(\mathbf{r}))$. On the other hand, we also know
\[
\vol_\lambda(\Hess(S,h_1)) = \sum_{\mathbf{r} \in \mathcal{F}_n} \vol_\lambda(X(\mathbf{r}))
\]
from Theorem~\ref{theorem: Hess vol as sum F}. The fact that $\Hess(S,h_1)$ is a toric variety with moment map image precisely $\Perm(\lambda)$ implies that $\vol_\lambda(\Hess(S,h_1)) = \vol(\Perm(\lambda))$. Thus from Proposition~\ref{prop: volume-preserving} we obtain 
\[
\vol_\lambda(\Hess(S,h_1)) = \sum_{\mathbf{r} \in \mathcal{F}_n} \vol(F(\mathbf{r}))
\]
and hence 
\[
\sum_{\mathbf{r} \in \mathcal{F}_n} \vol_\lambda(X(\mathbf{r})) = 
 \sum_{\mathbf{r} \in \mathcal{F}_n} \vol(F(\mathbf{r}))
 \]
 from which it follows that $\vol_\lambda(X(\mathbf{r}))=\vol(F(\mathbf{r}))$. From this we can see that 
 Theorem~\ref{theorem: Hess vol as sum F} is a precise geometric analogue of Proposition~\ref{prop: volume-preserving}, since by taking volume of both sides of~\eqref{eq: Hess vol as sum F}, we obtain the equality $\vol(\Perm(\lambda)) = \sum_{\mathbf{r} \in \mathcal{F}_n} \vol(F(\mathbf{r}))$. 
\end{remark}

 We conclude with a problem which naturally arises out of the considerations above. 
It is well-known \cite[Exercise 12, p.180]{fult97} that there exist non-negative integers $a_w \geq 0$ such that 
\begin{equation}\label{eq-prob}
\displaystyle{[\Hess(S,h_1)]=\sum_{\substack{w\in \mathfrak{S}_n\\ \ell(w)=n-1}}a_w[X^{w_0w}]}
\end{equation}
where $w_0$ is the longest element in $\mathfrak{S}_n$. We can consider the following. 

\begin{problem} \label{conjecture:positivity}
Are the coefficients in~\eqref{eq-prob} \emph{strictly positive}?
\end{problem}

\begin{remark}\label{remark: kaji} 
It can be checked that the answer to Problem~\ref{conjecture:positivity} is ``Yes'' when $n=3$ and $n=4$.
Let $Y_3$, respectively $Y_4$, denote the permutohedral varieties in $\Fl(\C^3)$, respectively $\Fl(C^4)$. Then 
\begin{align*}
[Y_3]&=[X^{213}]+[X^{132}] \\
[Y_4]&=[X^{1432}]+[X^{2341}]+2[X^{2413}]+2[X^{3142}]+[X^{3214}]+[X^{4123}]
\end{align*}
where we use the simplified one-line notation $w=w(1)w(2)\ldots w(n)$ for permutations in $\mathfrak{S}_n$.
In fact, we have computer-based evidence \cite{Kaj} that Problem~\ref{conjecture:positivity} is true when $n \leq 6$. 
We can also consider Problem~\ref{conjecture:positivity} for other Lie types.
Again, we have computational evidence \cite{Kaj} the coefficients are strictly positive for the Lie types $B_3$, $B_4$, $B_5$, $C_3$, $D_4$, $F_4$, $G_2$. 
\end{remark}

\end{document}